\documentclass[a4paper]{amsart}

\usepackage[all]{xy}
\usepackage{amsmath,amssymb,amsfonts}
\usepackage[mathscr]{euscript}

\theoremstyle{plain}
\newtheorem{theorem}{Theorem}[section]
\newtheorem{lemma}[theorem]{Lemma}
\newtheorem{proposition}[theorem]{Proposition}
\newtheorem{corollary}[theorem]{Corollary}
\newtheorem{definition}[theorem]{Definition}

\newtheorem{example}[theorem]{Example}
\newtheorem{remark}[theorem]{Remark}
\newtheorem{question}{Question}

\def\<#1>{\langle\, #1\,\rangle}

\newcommand{\restr}{\hskip-4pt \restriction}

\newcommand{\R}{\mathbb{R}}
\newcommand{\C}{\mathbb{C}}
\newcommand{\N}{\mathbb{N}}

\newcommand{\B}{\mathscr{B}}
\newcommand{\A}{\mathscr{A}}

\newcommand{\Z}{\mathbb{Z}}

\newcommand{\cc}{\mathfrak{c}}
\newcommand{\cl}{\mathrm{cl}}

\newcommand{\cnaught}{{C_0(G)}}

\newcommand{\luc}{\mathit{LUC}(G)}

\renewcommand{\emptyset}{\varnothing}

\font\seis=cmr6
\def\CB{\mathscr{CB}}
\def\ruc{{\seis{\mathscr{RUC}}}}
\def\uc{\mathscr{UC}}
\def\luc{{\seis{\mathscr{LUC}}}}
\def\wap{{\seis{\mathscr{WAP}}}}
\def\lc{{\seis{\mathscr{LC}}}}
\def\B{{\mathscr{B}}}

\def\ap{{\seis{\mathscr{AP}}}}


\begin{document}

\title{
Approximable $\wap$- and $\luc$-interpolation sets}

\author[Filali and Galindo]{M. Filali \and  J. Galindo}
\thanks{Research of the first author partially supported by Grant
INV-2010-20 of the 2010 Program for Visiting Researchers of
University Jaume I}
\thanks{Research of  the second author  partially supported by the Spanish
Ministry of Science (including FEDER funds), grant MTM2011-23118 and Fundaci\'o Caixa Castell\'o-Bancaixa, grant P1.1B2008-26}
\keywords{uniformly continuous functions, weakly almost periodic,
semigroup compactifications, approximable interpolation sets,
uniformly discrete sets, translation-compact sets, strongly prime
points. }

\address{\noindent Mahmoud Filali,
Department of Mathematical Sciences\\University of Oulu\\Oulu,
Finland. \hfill\break \noindent E-mail: {\tt mfilali@cc.oulu.fi}}
\address{\noindent Jorge Galindo, Instituto Universitario de Matem\'aticas y
Aplicaciones (IMAC)\\ Universidad Jaume I, E-12071, Cas\-tell\'on,
Spain. \hfill\break \noindent E-mail: {\tt jgalindo@mat.uji.es}}

\subjclass[2010]{Primary 43A46; Secondary 22D15, 43A15, 43A60,
54H11}

\date{\today}

\begin{abstract} Extending and unifying concepts
 extensively used in the literature, we introduce  the notion of approximable interpolation sets
 for  algebras of functions on locally compact groups,
 especially for weakly almost periodic functions and for uniformly
  continuous functions. We characterize approximable interpolation
  sets both in combinatorial terms and in terms of the $\luc$- and
$\wap$-compactifications and analyze some of their properties.
\end{abstract}

\maketitle
\tableofcontents
\section{Introduction}

Interpolation sets have been a key technique for the construction of
functions of various types on infinite discrete or, more generally,
locally compact groups. They have the crucial property that any bounded
function defined on them extends to the whole group as a function of
the required type.

If we require the extended functions to be almost periodic, then
interpolation sets are usually known as $I_0$-sets and were
introduced by Hartman and Ryll-Nardzewsky \cite{hartryll64}. For
further details and recent results on $I_0$-sets, see for example
the papers by Galindo and Hern\'andez  \cite{galihern99fu,GH} Graham
and Hare \cite{grahhare06i,grahhare05iii,grahhare06iv}, Graham, Hare and
K\"orner \cite{grahharekornii} or  Hern\'andez \cite{hern08}.

Interpolation sets for the functions in the Fourier-Stieltjes
algebra $B(G)$ are usually known as Sidon sets when the group $G$ is discrete and
Abelian and weak Sidon sets in general, see for instance the works
by Lopez and Ross  \cite{lopezross}, and Picardello \cite{pica73b}.
Sidon sets  are in fact uniformly approximable  as proved by Drury in
\cite{drury70}. This means that in addition of being interpolation
sets for the Fourier-Stieltjes algebra, the characteristic function
of the set can be uniformly approximated by members of the algebra;
in other words, the characteristic function
of the set  belongs to the Eberlein algebra
$\B(G)=\overline{B(G)}^{\|\cdot\|_\infty}$. This fact has important
consequences as for instance Drury's  union theorem: the union of
two Sidon subsets of a discrete Abelian group remains Sidon.

 Ruppert  \cite{rup} and Chou \cite{C4} considered interpolation sets
 for the algebra of  weakly almost periodic functions on discrete groups and semigroups, again  with the extra condition that the
characteristic function of the set is weakly almost periodic. This
is equivalent to the property that all bounded functions vanishing off the set being
weakly almost periodic. These interpolation sets were called
\emph{translation-finite} sets (after their combinatorial
characterization) by Ruppert and $R_W$-sets (after their
interpolation properties) by Chou.

Let $\ell_\infty(G)$ be the C*-algebra of bounded,  scalar-valued
functions on $G$ with the supremum norm and let $\A(G)\subseteq
\ell_\infty(G)$. In the present paper we introduce  the notion of
approximable $\A(G)$-interpolation sets in such a way that
it is suitable for functions defined on any topological group $G$
and reduces to that of uniformly approximable Sidon sets when $G$ is
discrete and $\A(G)=B(G)$, and to that of $R_W$-sets or
translation-finite sets when  $G$ is discrete and $\A=\wap(G),$ the algebra of
weakly almost periodic functions on $G.$ Since we shall be dealing
with closed subalgebras of $\ell_\infty(G)$, save some brief
digressions around $B(G)$, we shall omit the adverb "uniformly" in
our definition (cf. Definition \ref{def:main}).

As the reader might expect,
 approximable $\A(G)$-interpolation sets can be found in
  abundance if the algebra is large while they might be hard
to find if the algebra is too small.  As extreme cases, we could
mention that all subsets of $G$ have the property if
$\A(G)=\ell_\infty(G)$, while no metrizable locally compact group can
have infinite approximable
 $\ap(G)$-interpolation sets, where $\ap(G)$ is the algebra of almost periodic functions on
 $G$ (see below).

Our principal concern shall be with the algebras $\luc(G)$, of bounded
functions which are uniformly continuous with respect to the right
uniformity of $G$, and  $\wap(G)$, of weakly almost
periodic functions on $G$. A combinatorial  characterization of
approximable
 $\luc(G)$- and $\wap(G)$-interpolation sets will be
presented in Section 4.

But beforehand, we deal in Section 3  with the more straightforward
cases of the algebra $\CB(G)$ of bounded, continuous, scalar-valued
functions and the algebra $\cnaught$ of continuous functions
vanishing at infinity on $G.$ It turns out that for the algebras
$\cnaught,$ $\CB(G)$ and $\luc(G)$, interpolation sets and
approximable interpolation sets are the same for any topological group.
This is also true for the algebra $\B(G)$ when $G$ is an Abelian
discrete group, a fact that follows from Drury's theorem since, as
we shall  see in Proposition \ref{openpb},
 uniformly approximable Sidon sets  (in the sense of Dunkl-Ramirez \cite{dunklrami}) are the same as our approximable $\B(G)$-interpolation sets when $G$ is discrete.

We do not know if this stays true for any locally compact group, but
we give a partial result towards its affirmation for locally compact
metrizable groups. It should be remarked that approximability cannot
be expected within  the algebra $B(G)$. In fact,   Dunkl and Ramirez
noted in \cite[Remark 5.5, page 59]{dunklrami} that if $T$ is a Sidon
set in a discrete Abelian group $G$, and $1_T$ denotes the
characteristic function of $T$, then $1_T\in
 B(G)$ if and only if $T$ is finite.  They observed also that for $G=\mathbb Z,$ this holds for any subset of $G$.

It is quick to see that the  closed discrete sets are
$\CB(G)$-interpolation sets when the topological space underlying
$G$ is normal, and the finite sets are $\cnaught$-interpolation sets (Proposition \ref{cb}).
In Section 4, we see that  the right (left) uniformly discrete sets
are approximable interpolation sets for the algebra  $\luc(G)$ ($\ruc(G)$) for any topological
group. The converse of each of these statements is proved when $G$
is metrizable.

To study  approximable $\wap(G)$-interpolation sets we shall work within
the frame of
   locally compact $E$-groups. This is a class of groups
(introduced in \cite{C2}, see Section 4 for the definitions of $E$-groups and $E$-sets)
larger than that of locally compact $SIN$-groups
whose
members always admit  a good supply of weakly almost periodic
functions. The key concept here is that  of translation-compact sets,
also defined in Section 4. The main result  proved in this regard
is  the topologized analogue of a theorem obtained by Ruppert for
discrete groups (also valid for some semigroups) in \cite[Theorem
7]{rup}, parts of which  were also proven independently by Chou in
\cite{C4}.

We show first that in any locally compact group $G,$ if $T$ is a right (or left)
uniformly discrete,  approximable $\wap(G)$-interpolation set,
then $VT$ is translation-compact for some neighbourhood $V$ of the
identity $e$.

The converse is also true when $G$  is
a locally compact $SIN$-group or, more generally, when $G$ is a
 locally compact $E$-group and $T$ is an $E$-set.

We prove that in any locally compact $E$-group $G,$ any  right (or left) uniformly
discrete $E$-set $T$ such that $VT$
is translation-compact,  for some  neighbourhood $V$ of the identity $e$,    is
an approximable
$\wap(G)$-interpolation set.

We deduce that when $G$ is in addition metrizable, right (and left) uniformly
discrete sets such that $VT$ is translation-compact determine
completely the approximable $\wap(G)$-interpolation sets.

Some  consequences are obtained. We see first that
$B(G)$-interpolation sets in a metrizable locally compact Abelian
group (i.e., topological Sidon sets) are necessarily uniformly
discrete with respect to some neighbourhood $V$ of $e$  such that
$VT$ is translation-compact. In particular, Sidon sets in a discrete
Abelian group are translation-finite.

A second remarkable corollary follows. We consider the space of functions $f$ in $\wap(G)$ such $|f|$ is almost convergent to zero; that is \[\wap_0(G)=\{f\in \wap(G): \mu(|f|)=0)\},\] where
$\mu$ is the unique invariant mean on  $\wap(G)$. With an additional help of Ramsey theory, we deduce that
approximable $\wap(G)$- and
approximable $\wap_0(G)$-interpolation sets are in fact the same. It follows, as
a consequence of this characterization, that
 no infinite subset of
a metrizable locally compact group, in particular no infinite subset
of a discrete group, can be an approximable $\ap(G)$-interpolation
set.

Other interesting consequences follow. As in \cite{rup}, approximable
$\wap(G)$-interpo\-lation sets are also characterized in term of
their closure in the $\wap$-compactification $G^\wap$ of $G$. We
note that, as a consequence of these results,  sets of this sort
provide a combinatorial characterization of the points (called
strongly primes)  which do not belong to the closure of $G^*G^*$  in
the $\wap$-compactification $G^\wap$ of $G$,
 where $G^*$ is the remainder $G^\wap\setminus G$ .The same
  result is also true for the $\luc$-compactification of $G$.
 Some of these facts can already be seen in Theorem \ref{wap}, but
since they go out of the scope of the present paper, we wish to develop them further in a forthcoming paper \cite{FG2}.

 Section 5 is devoted to analyze the behaviour of approximable
 $\A(G)$-interpolation  sets under finite unions. We
prove that finite unions of
   approximable  $\A(G)$-interpola\-tion
  sets are approximable $\A(G)$-interpolation sets, provided the union is uniformly discrete.
    We deduce a union theorem for a class of
      translation-compact sets (precisely those obtained from
approximable      $\wap(G)$-interpolation sets). When $G$ is
discrete, this was proved by Ruppert in \cite{rup} after he
characterized right translation-finite sets. A direct combinatorial argument
proving this fact was provided recently in
\cite[Lemma 5.1]{FPr}.

We have included in  Section 6  some examples and  remarks along
with  some questions that are left open in the paper. We, for
instance, see how
 our characterizations of  $\A(G)$-interpolation sets fail
 in the absence of metrizability. We give an example (namely the Bohr compactification $\Z^\ap$) of a
compact non-metrizable group with an  $\ap(G)$-interpolation set
which  is neither left nor right uniformly discrete.
 Under (CH) such an example can be found in any nonmetrizable locally compact
group. We also see, by means of a simple
 example,  why the passage from the discrete to the locally compact case
needs some care:  a subset $T\subseteq  \R$ can be both an
approximable $\wap(\R_d)$-interpolation set and an approximable
$\luc(\R)$-interpolation set, without being an approximable
$\wap(\R)$-interpolation set.

\section{Preliminaries}
We recall now the definitions of our function algebras. We follow as
much as possible notation and terminology from  \cite{BJM} to which
the reader is directed  for more details. Let $\CB(G)$ be the
algebra of continuous, bounded, scalar-valued functions equipped with
its supremum norm and $\cnaught$ be the algebra of continuous
functions vanishing at infinity on $G.$ For each function $f$
defined on $G$, the left translate $f_s$ of $f$ by $s\in G$ is
defined on $G$ by $f_s(t)=f(st)$. A bounded function $f$ on $G$ is
{\it right uniformly continuous} when, for every $\epsilon>0$, there
exists a neighbourhood $U$ of $e$ such that
\[
|f(s)-f(t)|<\epsilon\quad\text{whenever}\quad st^{-1}\in U.\] These
are functions which are left norm continuous, i.e., \[s\mapsto f_s:
G\to \CB(G)\] is continuous,
and so in the literature, these functions are denoted also by
$\lc(G)$ or  $\luc(G)$. In this note, we shall use the latter
notation. In a like manner,  we shall denote by $ \ruc(G)$ the
algebra of \emph{left uniformly continuous} functions on $G$ and by
$\uc(G)$ the algebra of \emph{right and left uniformly continuous}
functions on $G$, hence $\uc(G)=\luc(G)\cap \ruc(G)$.

Let $\wap(G)$ and $\ap(G)$ be, respectively,  the algebra of weakly almost periodic functions and the algebra of almost periodic functions on $G.$
Recall that a  function $f\in\CB(G)$ is {\it weakly almost periodic} when the set of all
its left (equivalently, right) translates form a relatively weakly
compact subset in $\CB(G).$
A function $f$ is {\it almost periodic} when the set of all its
left (equivalently, right) translates form a relatively norm compact
subset in $\CB(G).$

If $\mu$ is the unique invariant mean
on $\wap(G)$ (see \cite{BJM}, or \cite{Bu}),  we put
\[\wap_0(G)=\{f\in \wap(G):\mu(|f|)=0\}.\]

The {\it Fourier-Stieltjes algebra} $B(G)$ is the space of
coefficients of unitary representations of $G$. Equivalently, $B(G)$
is the linear span of the set of all continuous positive definite
functions on $G$. As the Fourier-Stieltjes algebra is not uniformly
closed, we will consider its uniform closure $\B(G)$. Hence we have the {\it Eberlein algebra}
$\B(G)=\overline{B(G)}^{\|\cdot\|_\infty}$.

Recall that
\begin{align*}\cnaught\oplus\ap(G)\subseteq\B(G)\subseteq \wap(G)&=\ap(G)\oplus \wap_0(G)\\&\subseteq
\luc(G)\cap\ruc(G)\subseteq \luc(G)\subseteq \CB(G)\end{align*} (see
\cite{C1} for the first inclusion and \cite{BJM} for the rest).

If $\A(G)\subseteq \ell_\infty(G)$ is a $C^\ast$-subalgebra
there is a canonical morphism  $\epsilon_\A \colon G\to G^\A$ of $G$
into the spectrum (non-zero multiplicative linear functionals)
$G^\A$ of $\A(G)$ that is given by evaluations:
\[\epsilon_\A(g)(f)=f(g),\; \text{for every}\;f\in \A(G)\; \text{and}\; g\in G.\] This map
is continuous if $\A(G)\subseteq \CB(G)$ and injective if $\A(G)$ separates
points. In this case we will omit the function $\epsilon$ and
identify $G$ as a subgroup of $G^\A$.

We say that the $C^*$-algebra $\A(G)$ is admissible when it satisfies
the following properties: $1\in \A(G)$, $\A(G)$ is left translation
invariant, i.e., $f_s\in\A(G)$ for every $f\in \A(G)$ and $s\in G,$ and
the function defined on $G$ by $xf(s)=x(f_s)$  is in $\A(G)$ for every
$x\in G^\A$ and $f\in\A(G).$ When $\A(G)$ is admissible, $G^\A$ becomes a
semigroup compactification of the topological group $G$. This means
that $G^\A$ is a compact semigroup having a dense homomorphic image
of $G$ such that the mappings
\[
x\mapsto xy\colon  G^\A\rightarrow  G^\A \,\,\text{ and } \,\,
x\mapsto \epsilon_\A(s)x\colon G^\A\rightarrow  G^\A
\]
are continuous for every $y\in G^\A$ and $s\in G$. The product in
$G^\A$ is given by
\[xy(f)=x(yf)\quad \text{for every}\quad x,y\in G^\A\;\text{and}\; f\in\A(G)\]
When $\A(G)=\luc(G),$ the semigroup compactification $G^{\luc}$ is usually
referred to as the $\luc$-compactifica\-tion. It is the
largest semigroup compactification in the sense that any other
semigroup compactification is the quotient of $G^{\luc}.$
 When  $G$ is discrete, $G^{\luc}$ and the Stone-\v Cech compactification $\beta G$ are the same.
The   semigroup compactification $G^\wap$ is referred to as the
$\wap$-compactification, and it is the largest semitopological
semigroup compactification. The embedding $\epsilon_{\luc}$ is a
homeomorphism onto its image, hence  $G$  may be identified with its image in $G^\luc$. The same is true for $G^\wap$
when $G$ is locally compact. The closure of a set $X$ in $G^{\A}$ is
denoted by $\overline X^{\A},$ while in $G$ the closure of $X$ is
denoted as usual by $\overline X.$

A recent account on semigroup compactifications is given in \cite{gali10}.

\section{Approximable interpolation sets}
We start with the  main definition of the paper. We then identify
the $\A(G)$-interpolation sets for metrizable topological groups when
$\A(G)=C_0(G)$ and $\CB(G)$.  They are given, respectively,  by the
finite sets and the closed discrete sets.

\begin{definition} \label{def:main} Let $G$ be a topological group with identity $e$ and let
$\A(G)\subseteq \ell_\infty(G)$. A subset $T\subseteq G$ is said to be
\begin{enumerate}
  \item an \emph{$\A(G)$-interpolation set} if every bounded function
$f\colon T\to \C$ can be extended to a function $\tilde{f}\colon
G\to \C$ such that $\tilde{f}\in \A(G)$.
\item an \emph{approximable $\A(G)$-interpolation set} if it is an
 $\A(G)$-interpolation set and for every
 neighbourhood $U$ of $e$, there are open neighbourhoods $
V_1,V_2$ of $e$ with $\overline{V_1}\subseteq V_2\subseteq U$ such
that, for each $T_1\subseteq T$  there is   $h\in \A(G)$ with
$h(V_1T_1)=\{1\}$ and $h(G\setminus (V_2T_1))=\{0\}$.
\end{enumerate}
\end{definition}

The following is true in more general situations and in particular for any locally compact metrizable group. But since to
prove this fact we need the machinery developed later in next
section,  we content ourselves in the present section with the
following easy particular case.

\begin{proposition}\label{ex:noap}
  A  discrete, divisible  Abelian group $G$  does not have nontrivial approximable
$   \ap(G)$-interpolation sets.
\end{proposition}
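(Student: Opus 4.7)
The plan is to exploit the connectedness of the Bohr compactification $G^{\ap}$ of a discrete divisible Abelian group. Since $G$ is discrete, the neighbourhoods of the identity appearing in Definition \ref{def:main} can all be taken to be $\{e\}$, so being an approximable $\ap(G)$-interpolation set $T\subseteq G$ reduces to the requirement that the characteristic function $1_{T_1}$ belongs to $\ap(G)$ for every subset $T_1\subseteq T$. Each such $1_{T_1}$ extends continuously to a function $\tilde h$ on $G^{\ap}$; since $G$ is dense in $G^{\ap}$ and $1_{T_1}$ takes values in $\{0,1\}$, the extension must take values in $\{0,1\}$ as well, so $\tilde h$ is the characteristic function of a clopen subset of $G^{\ap}$.

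The key step is then to verify that $G^{\ap}$ is connected. I would use the fact that, for an Abelian group $G$, the compactification $G^{\ap}$ coincides with the classical Bohr compactification of $G$, together with Pontryagin duality: a compact Abelian group is connected if and only if its dual is torsion-free, and the dual of $G^{\ap}$ is the character group $\widehat G$ equipped with the discrete topology. Divisibility of $G$ forces $\widehat G$ to be torsion-free, for if $\chi\in\widehat G$ satisfies $\chi^n=1$ and we write an arbitrary $g\in G$ as $g=nh$, then $\chi(g)=\chi(h)^n=1$, so $\chi$ is trivial.

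Once $G^{\ap}$ is known to be connected, its only clopen subsets are $\emptyset$ and $G^{\ap}$ itself, and the interpolation condition collapses to: every $T_1\subseteq T$ is either empty or equal to $G$. If $|G|\ge 2$, picking a singleton $T_1\subseteq T$ yields a contradiction unless $T$ is itself empty, giving the claimed triviality. The step I expect to require the most care is the connectedness argument, since it depends on identifying $G^{\ap}$ with the classical Bohr compactification so that Pontryagin duality becomes available; this identification is what ultimately brings the divisibility hypothesis into play, while every other step is a routine unwinding of the definitions.
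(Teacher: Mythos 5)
Your proposal is correct and follows essentially the same route as the paper: identify $G^{\ap}$ with the classical Bohr compactification, use Pontryagin duality and the divisibility of $G$ to see that $\widehat{G}$ is torsion-free and hence $G^{\ap}$ is connected, and then observe that approximability forces characteristic functions of subsets of $T$ to extend to clopen sets in $G^{\ap}$. Your version merely spells out a few details the paper leaves implicit (the reduction of Definition \ref{def:main} to $1_{T_1}\in\ap(G)$ in the discrete case, the explicit torsion-freeness computation, and the application to singletons $T_1$), which is a harmless elaboration rather than a different argument.
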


\begin{proof}
Let $G$ be a discrete, divisible  Abelian group. It is well-known
that for Abelian groups the Bohr compactification of $G$ can be
identified with the group of \emph{all} characters of the character
group of $G$, i.e., $G^\ap=\left(\widehat{G}_d\right)^\wedge$,
\cite[Theorem 26.12]{hewiross1}. Since $G$ is divisible,
$\widehat{G}$  is torsion-free and duals of torsion-free Abelian
groups are connected \cite[Theorem 24.23]{hewiross1}, so $G^\ap$ is
connected.

Now it only remains to realize that if $T\subseteq G $ is an
approximable $\ap(G)$-interpolation set, its characteristic function
$1_{T}$ is almost periodic and therefore  $\overline{T}^{\ap}$ is
closed and open in $G^\ap$.
\end{proof}

\begin{proposition}\label{cb} Let $G$ be a topological group with identity $e.$
\begin{enumerate}
\item If $\A(G)\subseteq \CB(G)$, then the $\A(G)$-interpolation sets are discrete.
\item For the algebras $\CB(G)$ and $\luc(G),$ the interpolation sets are approximable interpolation sets.
\item If the underlying topological space of $G$ is normal, then the  discrete closed subsets of   $G$ are approximable $\CB(G)$-interpolation sets.
\item If $G$ is locally compact, then  the finite subsets of $G$ are approximable $\cnaught$-interpolation sets.
\item If $G$ is metrizable, then  the discrete closed sets are \emph{the} approximable $\CB(G)$-interpolation sets.
 \item If $G$ is locally compact and  metrizable, then  the finite subsets of $G$  are \emph{the} approximable $C_0(G)$-interpolation sets.
\end{enumerate}
 \end{proposition}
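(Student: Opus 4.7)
For (i), I would argue by contradiction: if $t_0 \in T$ were not isolated in $T$, every neighbourhood of $t_0$ would meet $T \setminus \{t_0\}$, and the bounded function on $T$ sending $t_0$ to $0$ and the rest of $T$ to $1$ would admit no continuous extension to $G$. This rules out every $\A(G) \subseteq \CB(G)$, so interpolation sets must be discrete.

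The central observation for (ii) is that the separation clause in Definition \ref{def:main}(2) already holds for \emph{every} subset $T \subseteq G$ (not only for interpolation sets) once $\A(G)$ is $\luc(G)$ or $\CB(G)$. Given $U$, I would choose symmetric open neighbourhoods $V_0, V_1, V_2$ of $e$ with $V_0 V_1 \subseteq V_2$ and $\overline{V_1} \subseteq V_2 \subseteq U$; then for any $T_1 \subseteq G$ one has $V_0 \cdot (V_1 T_1) \subseteq V_2 T_1$, which says that $V_1 T_1$ and $G \setminus V_2 T_1$ are separated by the $V_0$-entourage of the right uniformity. A standard Urysohn-type construction in that uniform structure then yields $h \in \luc(G) \subseteq \CB(G)$ with $h(V_1 T_1) = \{1\}$ and $h(G \setminus V_2 T_1) = \{0\}$. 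Together with the interpolation hypothesis, this proves (ii).

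For (iii), I would deduce the interpolation property from Tietze's extension theorem, since every bounded function on the discrete closed subspace $T$ is automatically continuous and therefore extends to the normal space $G$; approximability then follows from (ii). For (iv), finiteness of $T$ combined with local compactness and Hausdorffness lets me choose $V_1, V_2$ small enough that $\{V_2 t\}_{t \in T}$ is a pairwise disjoint family of relatively compact open sets; both the interpolating and the approximating functions are then constructed as finite sums of $C_c(G)$ bump functions supported on the $V_2 t$'s, using Urysohn in the locally compact Hausdorff setting.

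For the converses in (v) and (vi), metrizability (hence first countability) upgrades (i): if a $\CB(G)$-interpolation set $T$ were not closed, a distinct-term sequence in $T$ would converge to some point outside $T$, and an oscillating $\{0,1\}$-valued function on this sequence would obstruct continuous extension, so $T$ must be closed discrete. For (vi), a $C_0(G)$-interpolation set is a fortiori a $\CB(G)$-interpolation set, hence already closed discrete by (v); extending in addition the constant function $1$ on $T$ to a member of $C_0(G)$ confines $T$ to a relatively compact set, and a relatively compact closed discrete set is finite. The main non-obvious ingredient is the uniform Urysohn observation in (ii); once it is in place, the remaining parts reduce to standard applications of Tietze extension and local-compactness bump functions.
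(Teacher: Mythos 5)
Your proposal is correct and follows essentially the same route as the paper: the uniform Urysohn separation of $V_1T_1$ from $G\setminus V_2T_1$ for (ii), Tietze--Urysohn extension for (iii), the oscillating sequence for (v), and the constant function $1$ obstructing $C_0$-extension for (vi) are all exactly the paper's arguments. The only difference is that you spell out (i) and (iv), which the paper dismisses as clear.
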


\begin{proof} Statements (i) and (iv) are clear.

To see Statement (ii), let $T\subseteq G$, $U$ be any neighbourhood of $e$ and choose a neighbourhood $V$ of $e$ such that
$V^2\subseteq U$. Then $V(V T)\subseteq UT$ , and so  $V(V T)$ and $(G\setminus (UT))$ are disjoint in $G$.
Therefore there exists $h\in \luc(G)$ such that
$h(\overline {V T})=\{0\}$ and $h((G\setminus (UT)))=\{1\}$.
Then the
statement clearly follows.

Urysohn's lemma together with Statement (ii) leads  immediately to
Statement (iii).

As for Statement (v), suppose otherwise that $T$ is discrete but not
closed, then pick a convergent sequence $(t_n)$ in $T$ with its
limit outside of $T$, and observe that the function $f$ defined on
$T$ such that $f(t_{n})=(-1)^n$ cannot be extended to a function in
$\CB(G)$.

For the last statement, suppose that $T$ is an approximable
$C_0(G)$-interpolation set. If $T$ is not finite, then since $T$
(being non-compact and closed by the previous statements) cannot
be contained in any compact subset of $G$, therefore a non-zero
constant function on $T$ do not extend to a  function in $C_0(G)$.
\end{proof}

\begin{remark}
In the non-metrizable situation, non-closed $\CB(G)$-interpolation
sets exist, see  Example \ref{ex:lucint} and Theorem \ref{CH}.
\end{remark}

In \cite[Corollary 2.2, page 49]{dunklrami}, it was  proved that $T$
is a Sidon set in a discrete Abelian group $G$ (i.e.,
$B(G)$-interpolation set) if and only if it is a
$\B(G)$-interpolation set. This is actually true for any discrete
group, as can be deduced from a result due to Chou (\cite[Lemma
3.11]{C4}). Chou's proof used direct functional analysis arguments
and does not rely on harmonic analysis tools.

As noted earlier $1_T\notin B(G)$ when $T$ is infinite. However,
$1_T$ may be a member of $\B(G).$ If a Sidon set $T$ satisfies this
property, then Dunkl and Ramirez called it a uniformly approximable
Sidon set (\cite[ Definition 5.3, page 59]{dunklrami}). Indeed,
Drury proved in \cite{drury70} that  Sidon sets in a discrete
Abelian group are uniformly approximable Sidon sets,  see also
 \cite[Theorem 5.7, page 61]{dunklrami}.
So with our terminology, each Sidon subset of a discrete Abelian
group $G$ is an approximable $\B(G)$-interpolation sets. We
summarize these observations in the following Proposition.

\begin{proposition}\label{openpb} Let $G$ be a discrete Abelian group and $T$ a subset of $G.$ Then the following statements are equivalent.
\begin{enumerate}
\item $T$ is a $B(G)$-interpolation set.
\item $T$ is a $\B(G)$-interpolation set.
\item $T$ is an approximable  $\B(G)$-interpolation set.
\end{enumerate}
\end{proposition}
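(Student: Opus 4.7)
The implications (iii)$\Rightarrow$(ii) and (i)$\Rightarrow$(ii) are immediate: the first from Definition~\ref{def:main}, and the second because $B(G)\subseteq\B(G)$, so any $B(G)$-extension is in particular a $\B(G)$-extension. The only nontrivial direction among the first two equivalences is therefore (ii)$\Rightarrow$(i), and this I would simply quote from Chou's Lemma~3.11 in \cite{C4} (equivalently, \cite[Corollary~2.2]{dunklrami}), as the excerpt itself announces. The underlying argument is a soft one: the restriction map $R\colon\B(G)\to\ell^\infty(T)$ is a continuous linear surjection between Banach spaces, so the open mapping theorem yields a constant $K$ such that every $f\in\ell^\infty(T)$ admits a $\B(G)$-extension $\tilde f$ with $\|\tilde f\|_\infty\le K\|f\|_\infty$. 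One then uses uniform density of $B(G)$ in $\B(G)$ to approximate $\tilde f$ by some $g\in B(G)$, subtracts $g|_T$ from $f$ (obtaining a function of smaller sup norm on $T$), and iterates; summing the resulting series exhibits an exact extension of $f$ lying in $B(G)$.

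For the remaining direction (i)$\Rightarrow$(iii), the key observation is that, since $G$ is discrete, the singleton $\{e\}$ is an open neighbourhood of the identity, so we may take $V_1=V_2=\{e\}$ (and indeed $U=\{e\}$) in Definition~\ref{def:main}(ii). With this choice, $V_1T_1=V_2T_1=T_1$ and the approximability clause collapses to the requirement that $1_{T_1}\in\B(G)$ for every $T_1\subseteq T$. Now any subset of a Sidon set is Sidon, so it is enough to know that the characteristic function of an arbitrary Sidon subset of a discrete Abelian group lies in the Eberlein algebra. But that is exactly Drury's theorem \cite{drury70} (see also \cite[Theorem~5.7]{dunklrami}), which says that every Sidon set in a discrete Abelian group is a uniformly approximable Sidon set. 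Combining the three implications closes the loop.

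The conceptually hardest step is (ii)$\Rightarrow$(i), but since Chou and Dunkl--Ramirez have settled it, in the present context it can be quoted wholesale. The small novelty of the argument lies in observing that the uniform-approximation clause built into Definition~\ref{def:main}(ii) degenerates, in the discrete setting, to the bare requirement $1_{T_1}\in\B(G)$; at this point Drury's theorem provides exactly the input needed.
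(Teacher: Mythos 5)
Your proof follows the same route as the paper: both reduce the equivalence of (i) and (ii) to Chou's Lemma 3.11 (equivalently Dunkl--Ramirez, Corollary 2.2) and the implication (i)$\Rightarrow$(iii) to Drury's theorem, and your observation that in a discrete group the approximability clause of Definition \ref{def:main} collapses to the requirement $1_{T_1}\in\B(G)$ for every $T_1\subseteq T$ is precisely the point the paper makes implicitly when it says that Drury's theorem shows Sidon sets are approximable $\B(G)$-interpolation sets. One caveat about your parenthetical sketch of (ii)$\Rightarrow$(i): the iteration as you describe it does not close. The open mapping theorem controls the sup norms of the extensions, and hence of the correcting functions $g_n\in B(G)$, but a series of elements of $B(G)$ that converges only uniformly has its sum in $\B(G)=\overline{B(G)}^{\|\cdot\|_\infty}$, not in $B(G)$; to conclude that the sum lies in $B(G)$ one must control the $B(G)$-norms of the $g_n$ (for Abelian $G$, the measure norms $\Vert\mu_n\Vert_{M(\widehat{G})}$), and extracting such a uniform bound from the approximation hypothesis requires an additional Baire category (or closed graph) argument, which is what the cited proofs actually supply. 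Since you explicitly defer to Chou and Dunkl--Ramirez for this step, the proof as a whole stands, but the sketched ``soft'' argument should not be presented as a proof of (ii)$\Rightarrow$(i).
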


When $G$ is a  metrizable locally compact Abelian group, the
implication (i) $\Longrightarrow$ (iii) of the previous proposition
is valid.
 For this, we need the following lemma due to  Dechamps-Gondim (\cite[Theor\`eme 1.1 and Theor\`eme
 2.1]{MDG}). We note that $B(G)$-interpolation sets are called
 \emph{topological Sidon} sets in \cite{MDG}.

\begin{lemma}[Dechamps-Gondin] \label{Myriam} Let $G$ be a metrizable locally compact Abelian group and $T$ be a $B(G)$-interpolation set.
Then for every $\beta>0$ and every neighbourhood $U$ of the
identity, there exist a constant $M$ and a compact subset $K\subseteq
\widehat{G}$ such that for every finite subset $X$ of $T$ there
exists a function $f\in L_1(\widehat{G})$ with  support contained in
$K$ and \begin{enumerate}
\item $\|f\|_1\leq M$,
\item $\widehat{f}_{|X}=1$,
\item $|\widehat{f}(g)|<\beta$ for every $g\notin UX.$
\end{enumerate}
\end{lemma}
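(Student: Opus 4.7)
The plan is to combine the open mapping theorem, applied to the restriction $B(G)\to\ell^\infty(T)$, with a Fourier-localisation construction in the dual group $\widehat{G}$, exploiting the metrizability of $G$ throughout.

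First, identifying $B(G)$ with the Fourier--Stieltjes image of $M(\widehat{G})$ with total-variation norm, the hypothesis that $T$ is a $B(G)$-interpolation set makes the restriction map $R\colon M(\widehat{G})\to\ell^\infty(T)$, $R\mu=\widehat{\mu}|_T$, a bounded linear surjection between Banach spaces. Open mapping then yields $M_0>0$ such that every $\phi\colon T\to\C$ with $\|\phi\|_\infty\le 1$ lifts to some $\mu\in M(\widehat{G})$ with $\|\mu\|\le M_0$.

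Next, using $\sigma$-compactness of $\widehat{G}$ (equivalent to metrizability of $G$), I would construct a Fej\'er-type kernel $\psi\in L^1(\widehat{G})$ of compact support with $\widehat{\psi}(e)=1$, $\|\psi\|_1$ bounded by a constant $C_1$ depending only on the chosen parameters, and $|\widehat{\psi}(g)|<\eta:=\beta/M_0$ for every $g$ outside a preassigned neighbourhood $V$ of $e$ with $V^2\subseteq U$. Such kernels arise as sufficiently high convolution powers of positive, compactly supported, unit-mass functions on $\widehat{G}$, renormalised so that $\widehat{\psi}(e)=1$.

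Given a finite $X\subseteq T$, I would then apply the first step with $\phi=\mathbf{1}_X$ to obtain $\mu_X$ with $\widehat{\mu_X}|_T=\mathbf{1}_X$ and $\|\mu_X\|\le M_0$; replace $\mu_X$ by a compactly supported $\mu_X'$ of uniformly bounded norm via a weak-$*$ compactness argument on the unit ball of $M(\widehat{G})$ combined with the $\sigma$-compact exhaustion of $\widehat{G}$, performing a finite-dimensional correction on the span of the evaluations at $X$ to preserve $\widehat{\mu_X'}|_X=\mathbf{1}$; and set $f:=\mu_X'*\psi\in L^1(\widehat{G})$. Then $f$ has compact support, $\|f\|_1\le M:=M_0C_1$ by construction, $\widehat{f}|_X\equiv 1$ after tuning the kernel so that $\widehat{\psi}\equiv 1$ in a neighbourhood of $X$ (or after absorbing a further correction factor into $\mu_X'$), and $|\widehat{f}(g)|<M_0\eta=\beta$ for $g\notin UX$, since in that case $gx\inv\notin V$ for every $x\in X$ and so $|\widehat{\psi}(gx\inv)|<\eta$.

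The main obstacle is the joint control of compact support \emph{and} exact Fourier value $1$ on $X$, while keeping the $L^1$-norm uniformly bounded in $|X|$. Neither property survives the naive convolution $\mu*\psi$, and both hinge on metrizability: compact truncation of $\mu_X$ requires a $\sigma$-compact exhaustion of $\widehat{G}$, and exact interpolation at $X$ demands either a sharpened kernel adapted to $X$ or a finite-dimensional correction whose norm is controlled again by $M_0$. The interplay between the global interpolation constant $M_0$ and the localisation parameters of $\psi$ constitutes the analytic core of the argument.
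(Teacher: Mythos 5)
The paper offers no proof of this lemma: it is quoted directly from Dechamps-Gondim (Th\'eor\`emes 1.1 and 2.1 of \cite{MDG}), so your proposal has to stand on its own, and as written it has two genuine gaps. The first is a Fourier-side bookkeeping error that breaks the construction. Setting $f=\mu_X'\ast\psi$ with the convolution taken in $M(\widehat{G})\ast L^1(\widehat{G})$ gives $\widehat{f}=\widehat{\mu_X'}\cdot\widehat{\psi}$, a \emph{product} of functions on $G$, not the convolution $\sum_{x}c_x\widehat{\psi}(gx^{-1})$ that your final estimate tacitly uses. With the product, your two requirements on $\psi$ are incompatible: you need $|\widehat{\psi}(g)|<\eta$ for all $g\notin V$ (to get (iii)) and simultaneously $\widehat{\psi}(x)=1$ for all $x\in X$ (to get (ii)); but $X$ is an arbitrary finite subset of $T$, not contained in $V$, so $|\widehat{f}(x)|\leq M_0\eta=\beta<1$ at every $x\in X\setminus V$ and (ii) fails. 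A kernel whose transform is $1$ on $X$ and small off $UX$ with $L^1$-norm bounded independently of $X$ is essentially the object the lemma asserts to exist, so ``tuning $\psi$ so that $\widehat{\psi}\equiv 1$ near $X$'' is circular; and the variant $f=\psi\cdot P$ with $P$ a trigonometric polynomial with spectrum $X$ (which \emph{would} produce $\widehat{f}=\sum_x c_x\widehat{\psi}(\cdot\,x^{-1})$ and make your last estimate meaningful) trades the problem for the bound $\sum_{x\in X}|c_x|$, which grows with $|X|$.

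The second gap is the uniform compact support. The open-mapping step is fine and yields the interpolation constant $M_0$, but producing, for \emph{every} finite $X\subseteq T$, an interpolating measure supported in one fixed compact $K\subseteq\widehat{G}$ with norm bounded independently of $X$ is the actual content of Dechamps-Gondim's Th\'eor\`eme 1.1, not a routine consequence of $\sigma$-compactness of $\widehat{G}$. Truncating $\mu_X$ to a compact set and then correcting on the finite-dimensional span of the evaluations at $X$ gives no uniform control: the size of the correction depends on how small the truncation error must be to keep the $|X|\times|X|$ interpolation system invertible, and this degrades as $|X|$ grows, so neither $K$ nor $M$ comes out independent of $X$. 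The known proofs obtain this uniformity from a Drury-type convolution and averaging device adapted to the topological setting; that ingredient is what is missing from your outline.
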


%

\begin{proposition}\label{prop:bg} Let $G$ be a metrizable locally compact Abelian group.
Then the $B(G)$-interpolation sets are approximable
$\B(G)$-interpolation sets.
\end{proposition}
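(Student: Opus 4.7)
The plan is to use Dechamps-Gondim's Lemma to produce, for each $T_1\subseteq T$, an auxiliary $\widehat\mu\in B(G)$ whose values are $1$ on $T_1$ and small outside $V_2T_1$, and then to apply continuous functional calculus in the $C^*$-algebra $\B(G)$ to convert these approximate values into the exact values $1$ and $0$ required by the definition of approximability.

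Since $B(G)\subseteq\B(G)$, $T$ is automatically a $\B(G)$-interpolation set; only the approximability condition needs argument. Given a neighbourhood $U$ of $e$, fix an open $V_2\subseteq U$ and apply Lemma~\ref{Myriam} with $\beta=1/4$ and this $V_2$ to obtain $M$ and a compact $K\subseteq\widehat G$. The estimate
\[
|\widehat f(g)-\widehat f(g')|\leq\|f\|_1\sup_{\chi\in K}|\chi(gg'^{-1})-1|
\]
shows that all Fourier transforms of $L^1(\widehat G)$-functions supported in $K$ of norm at most $M$ are uniformly equicontinuous; hence there is an open neighbourhood $W$ of $e$ in $G$ on which each such transform varies by less than $1/4$. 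Fix an open $V_1$ with $\overline{V_1}\subseteq V_2\cap W$, so $\overline{V_1}\subseteq V_2\subseteq U$ as required.

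Now fix $T_1\subseteq T$. For each finite $Y\subseteq T_1$, Lemma~\ref{Myriam} supplies $f_Y\in L^1(\widehat G)$ with $\supp(f_Y)\subseteq K$, $\|f_Y\|_1\leq M$, $\widehat{f_Y}|_Y=1$ and $|\widehat{f_Y}|<1/4$ off $V_2Y$. Viewing $\{f_Y\}$, directed by inclusion of $Y$, as a bounded net in the weak$^{*}$-compact closed ball of $M(K)$, extract a weak$^{*}$ cluster point $\mu$; testing against $\chi\mapsto\overline{\chi(g)}\in C(K)$ shows $\widehat{f_{Y_\alpha}}(g)\to\widehat\mu(g)$ pointwise along a convergent subnet. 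Thus $\widehat\mu\in B(G)$, and one reads off that $\widehat\mu\equiv 1$ on $T_1$ (each $t$ is eventually in $Y_\alpha$); $|\widehat\mu|\leq 1/4$ off $V_2T_1$ (since each $Y_\alpha\subseteq T_1$); and $|\widehat\mu-1|\leq 1/4$ on $V_1T_1$, by equicontinuity combined with $V_1\subseteq W$.

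Choose any continuous $\rho\colon\mathbb C\to[0,1]$ with $\rho\equiv 1$ on $\{|z-1|\leq 1/4\}$ and $\rho\equiv 0$ on $\{|z|\leq 1/4\}$, and set $h=\rho\circ\widehat\mu$. Since $B(G)$ is closed under complex conjugation, Stone-Weierstrass on the compact disc $\{|z|\leq M\}$ approximates $\rho$ uniformly by polynomials in $z$ and $\bar z$; applied to $\widehat\mu$ and $\overline{\widehat\mu}$ this exhibits $h$ as a uniform limit of elements of $B(G)$, so $h\in\B(G)$. By the three properties of $\widehat\mu$, $h\equiv 1$ on $V_1T_1$ and $h\equiv 0$ off $V_2T_1$. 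The delicate point is the order of quantifiers: $W$ depends on $K$, which depends on $V_2$, so $V_1$ must be chosen only after $V_2$ and $K$ are fixed, but this is compatible with the definition because $V_1$ and $V_2$ are selected once for all $T_1$.
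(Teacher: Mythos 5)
Your proof is correct and follows essentially the same route as the paper: Dechamps-Gondim's lemma, a weak$^{*}$ cluster point of the finite-stage functions giving $\widehat{\mu}\in B(G)$ with the three approximate properties, and then composition with a continuous thresholding function to land in $\B(G)$. The only (immaterial) difference is at the last step, where the paper justifies $\rho\circ\widehat{\mu}\in\B(G)$ by extending $\widehat{\mu}$ to the Eberlein compactification and restricting back, while you use Stone--Weierstrass functional calculus in the conjugation-closed algebra $B(G)$; both are valid.
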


\begin{proof}
Let $T$ be a $B(G)$-interpolation set. Choose a  neighbourhood $U$
of the identity and  $\beta<\frac12$. Take from Lemma \ref{Myriam}
the corresponding $K\subseteq \widehat{G}$ and $M>0$.

We first observe that the Fourier-Stieltjes transforms
$\widehat{\mu}$ of measures  $\mu\in M(\widehat{G})$ with  support
contained in  $K$ and $\|\mu\|\leq M$ all  have a common modulus of
uniform continuity: indeed if $V_{K,M}=\{g\in G\colon
|\chi(g)-1|<\varepsilon/2M, \mbox{ for all } \chi \in K\}$, then
$V_{K,M}$ is a neighbourhood of $e$ and $gh^{-1}\in V_{K,M}$ implies
that $|\widehat{\mu}(g)-\widehat{\mu}(h)|<\varepsilon$.
 Let $V_1\subseteq V_{K,M}$ be a neighbourhood of $e$ such that $\overline{V_1}\subseteq U$. We now see that $V_1$ and $U$ suffice to show that $T$ is an approximable
 $\B(G)$-interpolation set. Let to that end $T_1\subseteq T$.  By Lemma
\ref{Myriam}, for each finite subset $X\subseteq T_1$, there is
$f_X\in L_1(\widehat{G})$  with  $\|f_X\|_1\leq M$,
$\widehat{f_X}(X)=\{1\}$ and $|\widehat{f_X}(g)|<\beta$ for every
$g\in G\setminus UX$. Consider the net $(f_X)_X$ where $X$ runs over
all finite subsets of $T_1$ ordered by inclusion and let
 $\mu\in M(\widehat{G})$ be  the limit of some subnet of  $(f_X)_X$ in the weak $\sigma(M(G),C_0(G))$-topology (recall that the ball of radius $M$ in $M(G)$ is compact in this topology). Then $\|\mu\|\leq M$, $\widehat{\mu}(T_1)=\{1\}$ and
$|\widehat{\mu}(g)|<\beta$ for every $g\in G\setminus UT_1$. Let
$\psi=\widehat{\mu}\in B(G)$ for the remainder of this proof.

Extend $\psi$ to a continuous function $\widetilde{ \psi}$ on the
Eberlein compactification $G^{\B}$ of $G$. Take  $0<\epsilon
<1-\beta$ and  choose  a continuous function $\rho:\mathbb R\to \mathbb
R$ such that $\rho((-\beta,\beta))=\{0\}$ and
$\rho((1-\epsilon,1+\epsilon))=\{1\}$. Define finally   $\tilde
\phi=\rho\circ\widetilde{\psi}$. Then $\tilde\phi\in \CB(G^{\B}),$ and
so $\phi:=\tilde\phi\restr_G\in \B(G)$. Moreover, if $s\in V_1T_1$
then $st^{-1}\in V_1$ for some $t\in T_1$, and so
\[
|\psi(s)-1|=|\psi(s)-\psi(t)|<\epsilon,\] showing that $\psi(s)\in
(1-\epsilon,1+\epsilon)$, and therefore,
$\phi(s)=\rho(\psi(s))=1$. If, on the other hand,  $s\notin UT_1$, then
$\phi(s)=\rho(\psi(s))=0$, since $ \psi(s)\in (-\beta,\beta).$ We have
thus that $\psi(V_1T_1)=\{1\}$ and $\psi(G\setminus UT_1)=\{0\}$.
  This completes the proof.
\end{proof}

\section{Characterizing approximable $\luc(G)$- and $\wap(G)$-interpolation sets}

 The case of $\luc(G)$ is not as straightforward as that of $\cnaught$ or $\CB(G)$. But we  prove in Theorem
 \ref{lem:lucintconv}  that the $\luc(G)$-interpolation sets are
 given by the right uniformly discrete sets when $G$ is metrizable.

The situation becomes more delicate with the algebra $\wap(G)$.
Here, if $G$ is   minimally weakly almost periodic (as, for
instance, the group $SL(2,\mathbb R)$) then only finite sets are
$\wap(G)$-interpolation sets, for in this case
$\wap(G)=\cnaught\oplus\mathbb C1$. So we shall work with  groups
which have a good supply of non-trivial weakly almost periodic
functions such as $SIN$-groups, or
more generally, $E$-groups  (see \cite{BF} or \cite{F07}).

Theorem \ref{wap} concerns mainly
approximable $\wap(G)$-interpolation sets with $G$  a locally
compact $E$-group. This theorem
 is the topologized version of a
theorem of  Ruppert  proved in \cite{rup} for some infinite discrete
semigroups. We identify the approximable $\wap(G)$-interpolation
sets when $G$ is metrizable. They are given by the  left (or right)
uniformly discrete sets $T$ such that $VT$ is translation-compact for some neighbourhood $V$ of the identity.

Another interesting
consequence is given in Theorem \ref{wap=wap_0} which shows that
left (or right) uniformly discrete, approximable
$\wap(G)$-interpolation sets  and left (or right) uniformly discrete,
approximable $\wap_0(G)$-interpolations sets are in fact the same sets $T$, they
both have the property that $VT$  is translation-compact for some neighbourhood $V$ of the identity.
When $G$ is metrizable,
the uniform discreteness  turns to be a
necessary condition as well.

We should point out that part of Theorem \ref{wap} (and so also part of Theorem \ref{wap=wap_0})  is proved beforehand in Corollary \ref{cor:wap2} for any locally compact group.
Theorem \ref{wap=wap_0} together with Corollary \ref{cor:wap2}  implies, as already promised before Proposition
\ref{ex:noap}, that uniformly discrete sets can never be
approximable $\ap(G)$-interpolation sets in this class of groups.

\begin{definition}
Let $G$ be a (non-compact) topological group. We say that a subset
$T$ of $G$ is
\begin{enumerate}
\item (Ruppert, \cite{rup})
{\it right translation-finite} if every  infinite subset $L\subseteq
G$ contains a finite subset $F$ such that $\bigcap\{b^{-1}T\colon
b\in F\}$ is finite; {\it left translation-finite} if every infinite
subset $L\subseteq G$ contains a finite subset $F$ such that
$\bigcap\{Tb^{-1}\colon b\in F\}$ is finite; and {\it
translation-finite} when it is both right and left
translation-finite.
\item
{\it right translation-compact} if every  non-relatively compact
subset $L\subseteq G$ contains a finite subset $F$ such that
$\bigcap\{b^{-1}T\colon b\in F\}$ is relatively compact; {\it left
translation-compact} if every  non-relatively compact subset
$L\subseteq G$ contains a finite subset $F$ such that
$\bigcap\{Tb^{-1}\colon b\in F\}$ is relatively compact; and
\emph{translation-compact} when it is both left and right
translation-compact.
\item a \emph{right $t$-set} (\emph{left $t$-set}) if there exists a compact subset $K$ of $G$ containing $e$ such that $gT\cap T$ (respectively, $Tg\cap T$) is
relatively compact for every $g\notin K$; and a \emph{$t$-set} when
it is both a  right and a left $t$-set.
 \end{enumerate}
\end{definition}

Right $t$-sets  were used originally by Rudin in  \cite{ru} to
construct  weakly almost
  periodic functions which
   are not in $\B(G)$ for non-compact locally
 compact Abelian groups with a closed discrete
  subgroup of unbounded order. Then they were used by
   Ramirez in \cite{ra} for the same purpose when
   $G$ is a non-compact locally compact Abelian group.
    They came up again in the non-Abelian situation
     when Chou proved in \cite{C1} the same result
     if $G$ is nilpotent or if $G$ is an $IN$-group
     (i.e., $G$ has an invariant compact neighbourhood
     of the identity).
In another paper \cite{C2}, Chou used these sets to construct
funtions in $\wap_0(G)$ which are not $\cnaught$.

 More recently, the $\wap$-functions defined with the help of
 right $t$-sets enabled Baker and Filali \cite{BF} and Filali \cite{F07} to study some algebraic properties of the $\wap$-compactification $G^\wap$ of $G.$

Ruppert  \cite[Theorem 7]{rup} and Chou \cite[Proposition 2.4]{C4},
proved that the translation-finite subsets of discrete groups
(called $R_W$-sets in \cite{C4}) precisely coincide with the
approximable $\wap(G)$-interpolation sets.
 The same class of sets was used by Filali and Protasov in \cite{FPr} to
 characterize the strongly prime ultrafilters in the Stone-\v Cech
 compactification  $\beta G$ of a discrete group $G$.
They were called sparse sets.

It is evident that the right (left) $t$-sets are also right (left)
translation-compact. Examples of right translation-finite sets which
are not right $t$-sets are easy to construct (see also \cite[Examples
11]{rup}), and even examples of right translation-finite sets which are
not finite unions of right $t$-sets were  devised in the discrete case
by Chou  \cite[Section 3]{C4}. $t$-Sets having compact covering as
large as that of $G$ may be constructed by induction in any
non-compact locally compact group $G$, see the following example
taken from \cite{F07}. Recall that, for a topological space  $X$,
the compact covering  of $X$ is   the minimal number  $\kappa(X)$ of
compact sets required to cover $X$.

\begin{example}\label{$t$-set}
Let $G$ be a non-compact locally compact group and fix a compact
symmetric neighbourhood $V$ of the identity $e$ of $G$. Start with
$t_0=e,$ say. Let $\alpha<\kappa(G)$ and suppose that the elements
$t_\beta$ have been selected for every $\beta<\alpha$. Set \[T_\alpha=
\bigcup\limits_{\beta_1,\beta_2,\beta_3<\alpha}V^2t_{\beta_1}^{\epsilon_1}t_{\beta_2}^{\epsilon_2}V^2t_{\beta_3}^{\epsilon_3},\]
where each $\epsilon_i=\pm1.$ Then $\kappa(T_\alpha)<\kappa(G),$ and
so we may select an element $t_\alpha$ in $G\setminus T_\alpha$
for our set $T$. In this way, we form a set
$T=\{x_\alpha:\alpha<\kappa(G)\}$. As already checked in \cite{F07},
the set $T$ is right $V^2$-uniformly discrete (the definition is
given below), has $\kappa(T)=|T|=\kappa(G)$, $s(VT) \cap (VT)$ and $(VT)s \cap (VT)$
are relatively compact for every $s \not\in V^2$, i.e., $VT$ is a
$t$-set.
\end{example}

We proceed now to prove the analogues of Statements (iii)-(vi) of
Proposition \ref{cb} for the algebras $\luc(G)$ and $\wap(G)$. We
shall need right (left) uniformly discrete sets  and
translation-compact sets instead of discrete closed sets or finite
sets.
  The relevance of translation-compact sets in our setting is made clear in  Lemma \ref{lem:tc} below. We also see in Example \ref{ex:tc} that translation-finite sets do not suffice to characterize approximable $\wap$-interpolation sets.

A crucial  tool for the rest of the paper is Grothendieck's
criterion which we recall now.
For the proof, see for example \cite[Theorem 4.2.3]{BJM}, or the
original paper of Grothendieck \cite{gr}.

{\it A bounded function $ f: G \to\C$ is weakly almost periodic if and
only if for every pair of sequences $(s_n)$ and $(t_m)$ in $G$,
\[\lim_n\lim_m f(s_nt_m)=\lim_m\lim_nf(s_nt_m)\]
whenever the limits exist.}

The criterion may be equivalent stated in
terms of ultrafilters: a bounded function $ f: G \to\C$ is weakly
almost periodic if and only if for every pair of sequences $(s_n)$
and $(t_m)$ in $G$ and
 every pair of free ultrafilters, $\mathcal{U}$ and $\mathcal{V}
$ over $\N$, the limits of the function along the ultrafilters coincide, i.e.,
\[\lim_{n,\mathcal{U}}\lim_{m,\mathcal{V}} f(s_nt_m)=
\lim_{m,\mathcal{V}}\lim_{n,\mathcal{U}}f(s_nt_m).  \]

The proof  of the following Lemma extracts the basic idea of Lemma B
of \cite{BF}.
 \begin{lemma}\label{lem:tc}
   Let $G$ be a topological group and let $T\subseteq G$. If
    $T$ is  translation-compact,
 then every right and left uniformly
    continuous function   supported in $T$ is weakly almost periodic
    and its extension to $G^\wap$ vanishes on $G^{\ast}G^\ast$.
   \end{lemma}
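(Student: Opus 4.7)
The plan is to verify Grothendieck's double-limit criterion recalled just before the lemma. Fix sequences $(s_n),(t_m)\subseteq G$ and assume both iterated limits of $f(s_nt_m)$ exist; we must show they agree. I split according to whether one of the sequences has a subsequence inside a compact subset of $G$. If $s_{n_k}\to s\in G$, then $(s_{n_k}t_m)(st_m)^{-1}=s_{n_k}s^{-1}\to e$ independently of $m$, so the $\luc$-condition on $f$ (i.e.\ the paper's right uniform continuity) gives $f(s_{n_k}t_m)\to f(st_m)$ uniformly in $m$; both iterated limits therefore reduce to $\lim_m f(st_m)$. The symmetric case $t_{m_k}\to t\in G$ is handled by the $\ruc$-condition on $f$, using $(s_nt_{m_k})^{-1}(s_nt)=t_{m_k}^{-1}t\to e$ uniformly in $n$.

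In the remaining case, both $(s_n)$ and $(t_m)$ leave every compact subset of $G$, and I claim each iterated limit vanishes. Suppose to the contrary that $\lim_n\lim_m f(s_nt_m)\neq 0$. Then for each sufficiently large $n$ there is $M_n$ such that $f(s_nt_m)\neq 0$, and hence $t_m\in s_n^{-1}T$, for every $m\geq M_n$. Apply the right translation-compactness of $T$ to the non-relatively compact set $\{s_n:n\in\N\}$ to extract a finite $F=\{s_{n_1},\ldots,s_{n_r}\}$ for which $\bigcap_{b\in F}b^{-1}T$ is relatively compact. For $m\geq\max_iM_{n_i}$ we then have $t_m\in\bigcap_{b\in F}b^{-1}T$, contradicting that $(t_m)$ escapes compacta. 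A symmetric argument applied to $\{t_m\}$, this time invoking left translation-compactness and the identity $s_n\in Tt_m^{-1}$, shows $\lim_m\lim_n f(s_nt_m)=0$ as well. Grothendieck's criterion then yields $f\in\wap(G)$.

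For the last assertion, let $u,v\in G^*$ and pick nets $(s_i),(t_j)\subseteq G$ with $s_i\to u$ and $t_j\to v$ in $G^\wap$. Since $G$ is locally compact it is open in $G^\wap$, so both nets eventually leave every compact subset of $G$. Separate continuity of multiplication in the semitopological semigroup $G^\wap$, applied first in the left variable (where $s_i\in G$) and then in the right, gives
\[
f^\wap(uv)=\lim_i\lim_j f(s_it_j).
\]
Rerunning the translation-compactness argument with nets (or first extracting sequences along which each iterated limit converges, using compactness of the closed disc containing the image of $f$) forces this limit to be $0$, so $f^\wap$ vanishes on $G^*G^*$. The only delicate point I anticipate is this final sequential-to-net passage; the substantive content of the proof is the contradiction between the escape of $(t_m)$ to infinity and its eventual confinement in the relatively compact intersection $\bigcap_{b\in F}b^{-1}T$ supplied by translation-compactness.
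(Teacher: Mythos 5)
Your proof is correct in substance, and its decisive step --- deducing from $\lim_n\lim_m f(s_nt_m)\neq 0$ that $(t_m)$ is eventually trapped in the relatively compact set $\bigcap_{b\in F}b^{-1}T$ supplied by translation-compactness --- is exactly the paper's argument (the paper runs it with nets from the outset, which is what makes the final assertion about $G^{*}G^{*}$ immediate rather than requiring the re-run you describe). Where you genuinely diverge is the case in which one of the sequences clusters in $G$: the paper passes to the compactifications $G^{\luc}$ and $G^{\ruc}$ and invokes joint continuity of $G\times G^{\luc}\to G^{\luc}$ to identify both iterated limits with $f^{\luc}(gq)$, whereas you stay inside $G$ and note that right (resp.\ left) uniform continuity forces $f(s_{n_k}t_m)\to f(st_m)$ uniformly in $m$, collapsing both iterated limits to $\lim_m f(st_m)$. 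Your route is more elementary and shows precisely where each half of the hypothesis $f\in\uc(G)$ enters. Two small repairs are needed. First, the lemma is stated for arbitrary topological groups, and a sequence with a subsequence contained in a compact set need not have a \emph{convergent} subsequence; you should instead take a cluster point $s$ and a subnet $s_{n_\alpha}\to s$ --- your uniform-convergence computation goes through verbatim, since passing to a subnet does not change the (assumed) iterated limits. Second, translation-compactness should be applied to a tail $\{s_n:n\geq n_0\}$ (still non-relatively compact) rather than to all of $\{s_n:n\in\N\}$, so that every element of the extracted finite set $F$ actually satisfies $\lim_m f(s_nt_m)\neq 0$; this is how the paper arranges its choice of $\alpha_i\geq\alpha_0$. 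Neither point affects the substance, and your handling of the last assertion via separate continuity of multiplication in $G^{\wap}$ is sound.
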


   \begin{proof}
Let $f\colon G\to \C$ be in $\uc(G)$ and supported in $T$.
We first prove that whenever $(s_\alpha) $ and $(t_\beta)$ are nets in $G$ that do not accumulate in $G$, then
\[\lim_\alpha \lim_\beta f(s_\alpha t_\beta)=\lim_\beta\lim_\alpha f(s_\alpha
t_\beta)=0.\]
Suppose towards a contradiction  that $\lim_\alpha\lim_\beta f(s_\alpha t_\beta)\neq 0.$ This means that
there is $\alpha_0$ such that for every $\alpha\geq \alpha_0$, we
have $\lim_\beta f(s_\alpha t_\beta)\neq 0$. Since $(s_\alpha)$ does
not cluster in $G$  and $T$ is translation-compact, there must exist
a finite set $\{s_{\alpha_1},\ldots,s_{\alpha_k}\}$ with
$\alpha_i\geq \alpha_0$ for each $i=1,2,...,k$ such that $\bigcap_{i=1}^k
s_{\alpha_i}^{-1}T$ and $\bigcap_{i=1}^k Ts_{\alpha_i}^{-1}$ are
relatively compact. Since $\lim_\beta f(s_\alpha t_\beta)\neq 0$,
there must be an index $\beta_0$ such that
$f(s_{\alpha_i}t_\beta)\neq 0$ for any $\beta\geq \beta_0$ and
$i=1,2,\ldots , k$. We deduce thus that
\[ t_\beta\in \bigcap_{i=1}^k s_{\alpha_i}^{-1}T
\mbox{ for    all } \beta\geq \beta_0 .\] Since the latter  set  is relatively
compact, we deduce that $(t_\beta)$ has a cluster point in $G$, a
contradiction showing that $\lim_\alpha\lim_\beta f(s_\alpha
t_\beta)= 0$.
 Arguing symmetrically,  we can prove that
$\lim_\beta\lim_\alpha f(s_\alpha t_\beta)= 0$.

We now prove that $f\in \wap(G)$.
Take two sequences  sequences $(s_n)$ and $(t_m)$ in $G$
such that both limits $\lim_n\lim_m f(s_nt_m)$ and $\lim_m\lim_n f(s_nt_m)$
exist. If neither $(s_n)$ nor $(t_m)$ accumulate in $G$, the above argument shows that \[
\lim_n\lim_m f(s_nt_m)=\lim_m\lim_n f(s_n
t_m)=0.\]
If  $(s_n)$ has a  cluster point $g\in G$, we choose
 a cluster point  $q\in G^\luc$ of $(t_m)$, and taking into account that  the multiplication  is jointly
continuous on $G\times G^\luc$ and passing to subnets if necessary, we  see that \[\lim_n\lim_m
f(s_n t_m)=f^\luc(gq)=\lim_m \lim_n f(s_nt_m),\] where $f^\luc$ is the extension of $f$ to $G^\luc$.
If, alternatively, it is $(t_m)$ that accumulates at some point of $G$,  we argue in the
same way using $G^\ruc $ instead of $G^\luc$ (the function $f$ is
assumed to be in $\luc(G)\cap \ruc(G)$).

We obtain, that  in any case,
\[\lim_n \lim_m f(s_n t_m)=\lim_m\lim_n f(s_n t_m),\] and,  as a consequence of Grothendiecks's criterion, $f
\in \wap(G)$.

Having proved that $f\in \wap(G)$, the last assertion of the theorem
 is a straightforward consequence of the argument in the first paragraph of this proof. If $p,q\in G^\ast G^\ast$, then $p=\lim_\alpha s_\alpha$,
$q=\lim_\beta t_\beta$ for some nets $(s_\alpha)\in G$ and
$(t_\beta)\in G$. Since the nets $(s_\alpha)$ and $(t_\beta)$ do not
accumulate in $G$, the mentioned argument shows that $f^\wap(pq)=0$.
   \end{proof}

 \begin{definition}
   Let $G$ be a topological group with identity $e$,  $T$ be a  subset of $G$ and $U$ be a neighbourhood of $e$.
We say that $T$ is
\begin{enumerate}
\item
 \emph{right $U$-uniformly
discrete}  if $Us\cap Us^\prime=\emptyset$ for every $s\neq
s^\prime\in T$.
\item \emph{left $U$-uniformly discrete}   if
 $sU\cap s^\prime U=\emptyset$  for every $s\neq s^\prime\in T$.
\item \emph{right uniformly discrete} (respectively, \emph{left
uniformly discrete}) when it is right $V$-uniformly discrete
(respectively, left $V$-uniformly discrete) with respect to some
neighbourhood $V$ of $e$.
\item uniformly discrete if it is both left and right uniformly  discrete.
\end{enumerate}
\end{definition}

The functions we introduce below constitute an important tool to
reflect combinatorial properties at the function algebra level. When
$f$ is the constant 1-function, they provide pointwise
approximations to the characteristic function.

\begin{definition}
 Let  $T$ be a subset of a topological group $G$ with identity $e$, and suppose that $T$ is right (respectively,  left) $U$-uniformly discrete with respect to some neighbourhood $U$ of $e$.  Let $V$ be a symmetric neighbourhood of $e$ such that
 $V^2\subseteq U.$
For each  $\psi \in \luc(G)$ (respectively, $\psi\in\ruc(G)$)  with
$\psi(e)=1$ and $\psi(G\setminus V)=\{0\}$, and for each bounded
function $f\colon T\to \C$, we define a function $f_{T,\psi}\;
(\text{respectively,}\; f_{\psi,T} )\colon G\to \C$ by
\[ f_{T,\psi}(s)=\sum_{t\in
T}f(t)\psi\left(st^{-1}\right)\quad \left(\text{ respectively,}\;
f_{\psi,T}(s)=\sum_{t\in
T}f(t)\psi\left(t^{-1}s\right)\right).\qquad(\ast)\]
\end{definition}

Let $G$ be non-compact and recall from \cite{C1}, that an {\it
E-set} is a non-relatively compact subset $T$ of $G$ such that for
each neighbourhood $U$ of $e,$ the set \[\bigcap \{t^{-1}Ut: t\in
T\cup T^{-1}\}\] is again a neighbourhood of $e.$ When such a set
exists in $G$, we say that $G$ is an {\it E-group}. It is clear that
non-compact $SIN-$groups are $E$-groups (with every non-relatively
compact subset being an $E$-set). Direct products of any
$E$-group with any topological group are again $E$-groups.
The groups with a non-compact centre such as the matrix group
$GL(n,\mathbb{ R})$ belong also to this class.

 \begin{lemma}\label{lem:tcc} Let   $G$ be  a topological group with identity $e$,
 and let $U$,  $V$ be  symmetric neighbourhoods of $e$ such that
 $V^2\subseteq U$ and let $T$ be right $U$-uniformly discrete
  (respectively, left $U$-uniformly discrete). Consider    $\psi\in \luc(G) $ (respectively, $\psi\in \ruc(G) $) with
 $\psi(e)=1$ and $\psi(G\setminus V)=\{0\}$. Then
\begin{enumerate}
\item $f_{T,\psi}(G\setminus VT)=\{0\} $ (respectively, $f_{\psi,T}(G\setminus TV)=\{0\}$).
  \item $f_{T,\psi}\in\luc(G)$ (respectively, $ f_{\psi,T}\in \ruc(G)$) and
\[ f_{T,\psi}(vt)= f_{\psi,T}(tv)=f(t)\psi(v) \quad \mbox{for all $v\in V$ and $t\in T$}.\]
\item If $G$ is an E-group, $T$ is  an $E$-set and $\psi\in\uc(G)$, then $f_{T,\psi}\in \uc(G)$ (respectively, $f_{\psi,T}\in \uc(G)$).
If, in addition, $VT$ (respectively, $TV$) is translation-compact, then
$f_{T,\psi}\in \wap(G)$ (respectively, $f_{\psi,T}\in \wap(G)$).
\end{enumerate}
\end{lemma}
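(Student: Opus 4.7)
The approach centres on a single crucial observation: the sum defining $f_{T,\psi}(s)$ has at most one nonzero term. Indeed, $\psi(st^{-1})\neq 0$ forces $s\in Vt$, and if $s\in Vt_1\cap Vt_2$ for $t_1,t_2\in T$, then $t_1\in V^{-1}Vt_2=V^2t_2\subseteq Ut_2$, hence $Ut_1\cap Ut_2\neq\emptyset$, and the right $U$-uniform discreteness of $T$ forces $t_1=t_2$. This immediately yields (i) (if $s\notin VT$, every $st^{-1}$ lies outside $V$) and the formula $f_{T,\psi}(vt)=f(t)\psi(v)$ in (ii).

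For the $\luc$-continuity in (ii), given $\epsilon>0$ I would use $\psi\in\luc(G)$ to pick a neighbourhood $W_0$ of $e$ with $|\psi(x)-\psi(y)|<\epsilon/(1+\|f\|_\infty)$ whenever $xy^{-1}\in W_0$, and then shrink to $W\subseteq W_0$ satisfying $VWV\subseteq U$. For $s,s'$ with $s(s')^{-1}\in W$, denote by $t_s,t_{s'}$ the unique (if existent) witnesses in $T$ with $s\in Vt_s$, $s'\in Vt_{s'}$, and write $s=v_st_s$, $s'=v_{s'}t_{s'}$. The identity $s(s')^{-1}=v_s(t_st_{s'}^{-1})v_{s'}^{-1}$ gives $t_st_{s'}^{-1}\in VWV\subseteq U$, so $U$-discreteness forces $t_s=t_{s'}=:t_0$ whenever both witnesses exist. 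Then $f_{T,\psi}(s)-f_{T,\psi}(s')$ collapses to $f(t_0)[\psi(st_0^{-1})-\psi(s't_0^{-1})]$ (with either term possibly zero when only one witness exists), and the choice of $W_0$ bounds it by $\epsilon$.

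The $\uc$-assertion of (iii) requires the analogous argument for $\ruc$-continuity, and here the $E$-set hypothesis enters decisively. Using $\psi\in\ruc(G)$, pick $U_0$ with $|\psi(x)-\psi(y)|<\epsilon/(1+\|f\|_\infty)$ whenever $x^{-1}y\in U_0$, and apply the $E$-set condition to $U\cap U_0$ to obtain the neighbourhood $W=\bigcap\{t^{-1}(U\cap U_0)t : t\in T\cup T^{-1}\}$ of $e$. For $s^{-1}s'\in W$, the element $w:=t_s^{-1}v_s^{-1}v_{s'}t_{s'}\in W$ satisfies $t_sw=v_s^{-1}v_{s'}t_{s'}\in Ut_{s'}$ and $t_sw\in t_sW\subseteq Ut_s$ (because $W\subseteq t_s^{-1}Ut_s$ by construction), so $U$-discreteness again forces $t_s=t_{s'}=:t_0$; and the same construction of $W$ yields $(st_0^{-1})^{-1}(s't_0^{-1})=t_0(s^{-1}s')t_0^{-1}\in U_0$, whence $\ruc$-continuity of $\psi$ provides the required $\epsilon$-bound. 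Combining (i)---which shows $f_{T,\psi}$ is supported in $VT$---with the $\uc$-continuity just established, Lemma~\ref{lem:tc} delivers $f_{T,\psi}\in\wap(G)$ whenever $VT$ is translation-compact.

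The principal technical obstacle is the uniform control of conjugation required in the $\ruc$ step: without the $E$-set hypothesis, neither the inclusion $t_0(s^{-1}s')t_0^{-1}\in U_0$ nor $w\in t_s^{-1}Ut_s$ can be secured by a single neighbourhood $W$ working uniformly across all witnesses in $T$, and the argument would collapse.
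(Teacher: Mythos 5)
Your strategy coincides with the paper's: the sum defining $f_{T,\psi}$ has at most one nonzero term, two points that are close in the relevant uniformity must share the same witness $t\in T$ (so the difference collapses to a single term controlled by the uniform continuity of $\psi$), the $E$-set hypothesis is what makes the conjugates $tWt^{-1}$ controllable uniformly over $t\in T$ in the $\ruc$ step, and Lemma \ref{lem:tc} then yields the $\wap$ conclusion. Part (i) and the product formula in (ii) are argued exactly as in the paper.

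There is one false intermediate claim in your $\luc$ step: you cannot in general ``shrink to $W\subseteq W_0$ satisfying $VWV\subseteq U$''. For $G=\R$, $U=[-2,2]$, $V=[-1,1]$ one has $V+V=U$, while $V+W+V$ properly contains $U$ for every neighbourhood $W$ of $0$, so no such $W$ exists. The step is repaired by noting that the containment you actually need is weaker and automatic: take $W\subseteq W_0\cap V$; then $t_st_{s'}^{-1}\in V^{-1}WV\subseteq V^3\subseteq U^2$, i.e.\ $t_s\in U^{-1}Ut_{s'}$, so $Ut_s\cap Ut_{s'}\neq\emptyset$ and right $U$-uniform discreteness forces $t_s=t_{s'}$. (In other words, what the definition of right $U$-uniform discreteness gives is ``$t t'^{-1}\in U^2$ implies $t=t'$'', not merely ``$tt'^{-1}\in U$ implies $t=t'$'', and the former is exactly what your situation provides.) With this correction the $\luc$ argument, the $\ruc$/$E$-set argument (where your chain $t_sw\in Ut_{s'}$ and $t_sw\in t_sW\subseteq Ut_s$ is correct and clean), and the final appeal to Lemma \ref{lem:tc} all go through and agree with the paper's proof.
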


\begin{proof} Statement
(i) follows immediately from the definition of $f_{T,\psi}$.
  The proof of (ii) is precisely  \cite[Exercise 4.4.16]{BJM}).
 If $g,h\in G$ are
such that $gh^{-1}\in V$, and $g\in Vt$ for some $t\in T$, then
$h\in V^2t$, while $h\notin Vt^\prime$ for any $t\neq t^\prime \in
T$. So for arbitrary $g,h\in G$ with $gh^{-1} \in V$, either
$f_{T,\psi}(g)=f_{T,\psi}(h)=0$ or there is $t\in T$ with
\[
\left|f_{T,\psi}(g)-f_{T,\psi}(h)\right|=\left|f(t)\psi(gt^{-1})-f(t)\psi(ht^{-1})\right|.\]
As $\psi \in \luc(G)$, we see that $f_{T,\psi}$ is   right uniformly
continuous, i.e., $f_{T,\psi}\in\luc(G)$.

To prove (iii), we check that $f_{T,\psi}$ is left uniformly
continuous as well. The case of $f=0$ is trivial, so suppose that
$f\ne0.$ Given $\epsilon>0,$ choose a neighbourhood $V_0$ of the
identity in $G$ with $V_0 \subseteq V$ and
\[|\psi(u)-\psi(v)|<\frac{\epsilon}{\|f\|}\quad\text{ whenever}\quad u^{-1}v\in
V_0.\] Let $W$ be a neighbourhood of the identity such that $t W
t^{-1}\subseteq V_0$ for every $t\in T.$

If $g\in VT,$ then $g=ut$ for some $u\in V$ and $t\in T,$ and so
$g^{-1}h\in W$ implies that \[h\in gW=utW\subseteq uV_0t\subseteq
V^2t,\] i.e., $h=vt$ for some $v\in U$. Now from
$g^{-1}h=t^{-1}u^{-1}vt\in W$, we conclude that $u^{-1}v\in
tWt^{-1}\subseteq V_0.$ Thus  \[|f_{T,\psi}(g) - f_{T,\psi}(h)| =
|f(t)\psi(u)-f(t)\psi(v)| \le\|f\||\psi(u) - \psi(v)|<\epsilon.\]
The argument is similar when $h\in VT,$ and it is  trivial when
neither $g$ nor $h$ is in $VT$. Since $f_{T,\psi}$ is both left and
right uniformly continuous and is supported in $VT$, the last part
of  Statement (iii) follows now from Lemma \ref{lem:tc}.
\end{proof}

 \begin{remark} \label{remrd}
Functions $\psi\in \uc(G)$ with $\psi(e)=1$ and $\psi(G\setminus V)=\{0\}$ for a given neighbourhood $V$ of the identity are always available since,
by \cite[Definition-proposition 2.5]{roeldier} the infimum of the left and right uniformities on $G$ (the so-called lower uniformity or Roelcke uniformity) induces the original topology on $G$.
\end{remark}

    \begin{lemma}\label{lem:lucint}
Let $G$ be a topological group with identity $e$ and let $T\subseteq G$.
\begin{enumerate}
\item If $T$ is  right uniformly discrete (respectively, left uniformly discrete),
then  $T$ is an approximable $\luc(G)$-interpolation set (respectively,
$\ruc(G)$-interpolation set).
\item If $G$ is an $E$-group, and $T$ is an $E$-set that is  right (respectively, left) $U$-uniformly discrete with respect some neighbourhood $U$ of the identity
and $VT$  (respectively, $TV$) is translation-compact for some neighbourhood $V$ of the identity with $V^2\subseteq U$, then $T$ is an
approximable $\wap(G)$-interpolation set.
\end{enumerate}
\end{lemma}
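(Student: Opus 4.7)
The plan is to turn the explicit kernels $f_{T,\psi}$ and $f_{\psi,T}$ of Lemma \ref{lem:tcc} into witnesses for both halves of Definition \ref{def:main}. That lemma already provides the hard analytic input (membership in $\luc(G)$, $\ruc(G)$ or $\wap(G)$ and the pointwise formula $f_{T,\psi}(vt)=f(t)\psi(v)$); what remains is to choose the kernel $\psi$ and the nested neighbourhoods $V_1,V_2$ carefully so that, taking $f=1_{T_1}$, the resulting function is reproduced on $V_1T_1$ and killed outside $V_2T_1$.

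For part (i), fix a symmetric neighbourhood $U$ of $e$ for which $T$ is right $U$-uniformly discrete, and a symmetric $V$ with $V^2\sub U$. By Remark \ref{remrd} there is $\psi\in\uc(G)\sub\luc(G)$ with $\psi(e)=1$, $0\le\psi\le1$, and $\psi(G\setminus V)=\{0\}$. For any bounded $f\colon T\to\C$, Lemma \ref{lem:tcc}(ii) already gives $f_{T,\psi}\in\luc(G)$ with $f_{T,\psi}(t)=f(t)$ on $T$, so $T$ is an $\luc(G)$-interpolation set. For the approximability half, given a neighbourhood $U_0$ of $e$ I shrink $V$ inside $U_0$ and take $V_2=V$; by continuity of $\psi$ at $e$ I pick an open $V_1$ with $\overline{V_1}\sub V_2$ and $\psi(V_1)\sub[\tfrac12,1]$. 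For an arbitrary $T_1\sub T$, set $g=(1_{T_1})_{T,\psi}$: by Lemma \ref{lem:tcc}(i)--(ii), $g\in\luc(G)$, $g(v_1t_1)=\psi(v_1)\in[\tfrac12,1]$ for $v_1\in V_1$ and $t_1\in T_1$, and $g$ vanishes off $V_2T$. The central check is that $g$ also vanishes on $V_2T\setminus V_2T_1$: if $s=v_2t$ with $v_2\in V_2$ and $t\in T$, then $V_2^2\sub U$ together with right $U$-uniform discreteness forces $t$ to be uniquely determined by $s$, and the assumption $s\notin V_2T_1$ then places $t\in T\setminus T_1$, so $g(s)=1_{T_1}(t)\psi(v_2)=0$. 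Composing with a continuous cutoff $\rho\colon[0,1]\to[0,1]$ with $\rho(0)=0$ and $\rho([\tfrac12,1])=\{1\}$ produces $h=\rho\circ g\in\luc(G)$, where closure under the composition uses that $\luc(G)$ is a $C^\ast$-algebra, hence closed under continuous functional calculus on self-adjoint elements. Then $h(V_1T_1)=\{1\}$ and $h(G\setminus V_2T_1)=\{0\}$, as required.

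Part (ii) follows the identical scheme, with two modifications: pick $\psi\in\uc(G)$ (again from Remark \ref{remrd}) and additionally shrink $V_2$ so that $V_2\sub V$, where $V$ is the hypothesised neighbourhood for which $VT$ is translation-compact. Since translation-compactness is inherited by subsets, $V_2T$ is still translation-compact, so Lemma \ref{lem:tcc}(iii), together with the $E$-set hypothesis on $T$, yields $f_{T,\psi}\in\wap(G)$ for every bounded $f$ on $T$; in particular $g=(1_{T_1})_{T,\psi}\in\wap(G)$ and $h=\rho\circ g\in\wap(G)$ by the same functional-calculus step. The left uniformly discrete cases are handled symmetrically, replacing $f_{T,\psi}$ by $f_{\psi,T}$ and invoking the parenthetical versions of Lemma \ref{lem:tcc}. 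The main obstacle is not a single deep step but the bookkeeping among nested neighbourhood choices: $V_1$ must be small enough to keep $\psi$ on it above $\tfrac12$, $V_2$ must simultaneously satisfy $V_2^2\sub U$ (to make $\{V_2t:t\in T\}$ pairwise disjoint, which drives the vanishing computation), $V_2\sub U_0$ (to meet Definition \ref{def:main}), and in part (ii) additionally $V_2\sub V$ so that translation-compactness transfers. Orchestrating these nested choices is exactly what upgrades Lemma \ref{lem:tcc} from a mere interpolation statement into an approximability one.
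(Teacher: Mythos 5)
Your proof is correct and follows essentially the same route as the paper: both rest on the kernels $f_{T,\psi}$ and Lemma \ref{lem:tcc}, with approximability witnessed by $1_{T_1,\psi}$. The only cosmetic differences are that the paper gets approximability in part (i) for free from Proposition \ref{cb}(ii) and, in part (ii), arranges the plateau by choosing $\psi\equiv 1$ on $V_1$ rather than composing with a cutoff $\rho$; your explicit insistence that $V_2\subseteq V$ so that translation-compactness passes to $V_2T$ is a point the paper leaves implicit.
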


\begin{proof}
Suppose $T$ is right uniformly discrete and let $U$ be a
neighbourhood of the identity
 with $Us\cap Us^\prime=\emptyset$ for
 every $s\neq s^\prime \in T$.
By Proposition \ref{cb}, we only need to check that $T$ is an
$\luc(G)$-interpolation set. Let $V$ be another neighbourhood of the
identity with $V^2\subseteq U$ and let $\psi\in \luc(G)$ with support
contained in $V$ and $\psi(e)=1$. If $f\colon  T\to \C$ is any
bounded function, then   the  function $f_{T,\psi}$ defined above in
$(\ast)$ is an extension of $f$ and it is in $\luc(G)$ by the
previous lemma.

Suppose now that $G$ is an $E$-group. Choose a function $\psi \in\uc(G)$ with support
contained in $V$ and  $\psi(e)=1$ (see Remark \ref{remrd}). If Condition (ii) is satisfied,
the above scheme will again show that
    $T$ is a $\wap(G)$-interpolation set using  (iii) of Lemma \ref{lem:tcc}.
 To see that $T$ is an  approximable $\wap(G)$-interpolation, take  two neighbourhoods of the identity $V_1$ and  $V_2$ with
$   V_1^3\subseteq V_2\subseteq U.$   Then
$V_1\overline{V_1}\subseteq V_1 V_1^2 \subseteq V_2$. Therefore,
$\overline V_1$ and $G\setminus V_2$ are closed sets in $G$ with
$V_1\overline{ V_1}\cap (G\setminus V_2)=\emptyset.$ So we may pick
$\psi\in \luc(G)$ with $\psi(V_1)=1$ and $\psi(G\setminus V_2)=0$.
If now $T_1$ is any subset of $T$, then  the function $1_{T_1,\psi}$
as defined in ($\ast$) with the constant function $1$ on $T_1$ is in
$\wap(G)$ again by  Lemma \ref{lem:tcc}. Since the function clearly
satisfies  $1_{T_1,\psi}(vt)=1$  for every  $v\in V_1$ and $t\in
T_1$ and $1_{T_1,\psi}(G\setminus V_2T_1)=\{0\}$, the proof is
complete.
 \end{proof}

It turns out that  the converses to the statements in Lemma \ref{lem:lucint} are valid
when $G$ is metrizable. We begin with the case of $\luc(G)$.

\begin{theorem}\label{lem:lucintconv} Let  $G$ be a metrizable topological group and let $T$ be a subset
of $G$. The following assertions are equivalent.
\begin{enumerate}
\item $T$ is  an approximable $\luc(G)$- (respectively, $\ruc(G)$-)interpolation set.
\item $T$ is  an  $\luc(G)$- (respectively, $\ruc(G)$-)interpolation set.
\item $T$  is right (respectively, left) uniformly discrete.
\end{enumerate}
 \end{theorem}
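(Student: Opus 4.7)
My plan is to prove the implication (ii) $\Rightarrow$ (iii) by contrapositive, since (i) $\Rightarrow$ (ii) is immediate from Definition \ref{def:main} and (iii) $\Rightarrow$ (i) has already been established in Lemma \ref{lem:lucint}(i) without any metrizability hypothesis. So I assume that $T$ is an $\luc(G)$-interpolation set that fails to be right uniformly discrete and aim to produce a bounded $f\col T\to\C$ that admits no extension in $\luc(G)$.

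Since $\luc(G)\sub\CB(G)$, the set $T$ is also a $\CB(G)$-interpolation set, so Proposition \ref{cb}(i) together with the argument of Proposition \ref{cb}(v) shows that $T$ is closed and discrete. Fix a countable decreasing base $\{W_n\}$ of open neighbourhoods of $e$, available by metrizability. The technical core of the argument is an inductive construction of pairs $(s_n,t_n)\in T\times T$ such that $s_n\neq t_n$, $s_nt_n\inv\in W_n$, and $\{s_n,t_n\col n\geq 1\}$ consists of pairwise distinct points. At stage $n$, the previously chosen points form a finite set $F_{n-1}=\{s_k,t_k\col k<n\}\sub T$, so the closed and discrete nature of $T$ provides a neighbourhood $Z_n$ of $e$ with $Z_n x\cap T=\{x\}$ for every $x\in F_{n-1}$. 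Picking a symmetric $V_n$ with $V_n^2\sub W_n\cap Z_n$ and invoking the failure of right $V_n$-uniform discreteness, I obtain $s_n\neq t_n$ in $T$ with $V_n s_n\cap V_n t_n\neq\emptyset$; this forces $s_nt_n\inv\in V_n\inv V_n=V_n^2\sub Z_n$, and the defining property of $Z_n$ (applied symmetrically to $t_ns_n\inv\in Z_n$) pushes both $s_n$ and $t_n$ outside $F_{n-1}$.

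With the sequence in hand, I define $f\col T\to\C$ by $f(s_n)=0$, $f(t_n)=1$ and $f(t)=0$ on the remaining points of $T$; the disjointness built into the construction makes this well defined. Any extension $\tilde f\in\luc(G)$ must, by right uniform continuity, satisfy $|\tilde f(x)-\tilde f(y)|<1/2$ on some neighbourhood $U$ of $e$ (in the sense $xy\inv\in U$). But $s_nt_n\inv\in W_n\sub U$ for all sufficiently large $n$, producing $1=|\tilde f(s_n)-\tilde f(t_n)|<1/2$, the desired contradiction. The $\ruc(G)$ version is completely symmetric: one uses $x\inv y\in U$ for left uniform continuity and the failure of left $V_n$-uniform discreteness to supply $s_n\inv t_n\in V_n^2$.

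The main obstacle I anticipate is the simultaneous control, during the inductive construction, of the closeness condition $s_nt_n\inv\in W_n$ and the disjointness of $s_n,t_n$ from the previously chosen points. It is precisely here that the closedness and discreteness of $T$, together with joint continuity of multiplication at $e$, enter in an essential way; without these properties the pairs supplied by the failure of uniform discreteness might reuse earlier elements and the bounded function $f$ could not be defined consistently.
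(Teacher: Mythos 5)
Your proposal is correct and follows essentially the same route as the paper: (iii)$\Rightarrow$(i) via Lemma \ref{lem:lucint}, (i)$\Rightarrow$(ii) by definition, and (ii)$\Rightarrow$(iii) by contrapositive, inductively extracting pairs $s_n\neq t_n$ in $T$ with $s_nt_n^{-1}\to e$ and all points distinct, then defining a bounded function on $T$ whose values on $s_n$ and $t_n$ stay far apart, contradicting right uniform continuity of any extension. Your use of the neighbourhoods $Z_n$ coming from discreteness of $T$ to force the new pair off the previously chosen finite set is a slightly cleaner packaging of the separation conditions (i)--(iii) in the paper's induction, but the argument is the same in substance.
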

\begin{proof}
Lemma \ref{lem:lucint} proves that (iii) implies (i) and that  (i)
implies (ii)  is  a matter  of definition. We only have to see that
(ii) implies (iii).

Suppose that  $T$ is  an  $\luc(G)$-interpolation set that  is not
right uniformly discrete.
 Consider a neighbourhood basis at the identity $(U_n)_n$  consisting of
symmetric neighbourhoods  such that $U_{n+1}^2 \subseteq U_n$. Recall
that $T$ is necessarily discrete.

Since  $T$ is not right uniformly discrete, we can find $s_1,t_1\in
T$, $s_1\neq t_1$, such that $s_1t_1^{-1}\in U_1^2$.

Suppose now that we have chosen $2n$ different points $\{t_1,\ldots,
t_n,s_1,\ldots,s_n\}$ and $1<k_1<k_2<\ldots k_n$ in such a way that
$s_nt_n^{-1}\in U_{k_n}^2$, but $s_n\neq t_n$.

Using  the fact that $T$ is discrete and that $t_i\neq s_j$ for all
$1\leq i,j\leq n$, we then choose $k_{n+1}>k_n$ such that:
\begin{enumerate}
 \item $\left(T\setminus \{t_i\}\right) \cap\left( U_{k_{n+1}}t_i\right)=\left(T\setminus \{s_i\}\right) \cap \left( U_{k_{n+1}}s_i\right)
  =\emptyset$ for all $i=1,\ldots,n$
  \item
  $ t_is_j^{-1}\notin U_{k_{n+1}}^2 \mbox{ for all $1\leq i,j\leq
  n$} $.
  \item $t_i t_j^{-1}\notin U_{k_{n+1}}^2, \;\;
s_i s_j^{-1}\notin U_{k_{n+1}}^2 \mbox{ for all $1\leq i,j\leq n$,
$i\neq j$}.$
  \end{enumerate}

Then \[\left(T\setminus\{t_1,\ldots, t_n,s_1,\ldots,s_n\}\right)
\left(T\setminus\{t_1,\ldots, t_n,s_1,\ldots,s_n\}\right)^{-1}\cap
U_{k_{n+1}}^2 \neq \{e\},\] for, otherwise, we would have
$TT^{-1}\cap U_{k_{n+1}}^2=\{e\}$ and this would imply that $T$ is
right uniformly discrete.

We now choose $t_{n+1}, s_{n+1}\in T\setminus\{t_1,\ldots,
t_n,s_1,\ldots,s_n\})$ with $t_{n+1}\neq s_{n+1}$ such that
$t_{n+1}s_{n+1}^{-1}\in U_{k_{n+1}}^2$.

We have constructed in this way two faithfully indexed sequences
\[T_1=\{t_n \colon n\in \N\}\subseteq T\quad\text{ and}\quad T_2=\{s_n
\colon n\in \N\}\subseteq T\] in such a way that $T_1\cap
T_2=\emptyset$ but $t_ns_n^{-1}$ converges to the identity.

Since $T$ is an $\luc(G)$-interpolation set, there is $f \in
\luc(G)$ so that $f(t_n)=1$ and $f(s_n)=-1$.
 By uniform continuity, there is
a neighbourhood $U_{0}$ of the identity such that $|f(x)-f(y)|<1$ if
$xy^{-1}\in U_{0}$. Taking  then $n$ such that $U_{k_n}^2\subseteq
U_0$, we reach a contradiction as $s_{n}t_{n}^{-1}\in U_{0}$, but
$f(s_n)-f(t_n)=2$.
\end{proof}

\begin{remark}
In the non-metrizable situation, non-uniformly discrete sets may be
$\luc(G)$-interpolation sets. In fact Example \ref{ex:lucint} gives
$\ap(G)$-interpolation sets which are not uniformly discrete.
Theorem \ref{CH} at the end of the paper suggests  that  metrizability is essential
for Theorem \ref{lem:lucintconv} to be true.
\end{remark}

We now deal with the $\wap(G)$-case.  To avoid making the proof of
the main theorem  too cumbersome, we begin by establishing the
following Lemma as well as Theorem \ref{inherit}.

\begin{lemma}\label{Ruppert} Let $G$ be a locally compact group with a symmetric relatively compact neighbourhood U of the identity and a right $U$-uniformly discrete set
$T$.
 If  $UT$   is not  translation-compact, then $T$ contains a subset $T_1$
  such that no $f\in \ell_\infty(G)$ with $f(T_1)=\{1\}$
  and $f(G\setminus UT_1)=\{0\}$ is  weakly almost periodic.
  \end{lemma}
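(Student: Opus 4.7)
The approach is to adapt Ruppert's combinatorial argument from \cite{rup} to the locally compact setting, using Grothendieck's double-limit criterion to show that a suitable $f$ cannot be in $\wap(G)$. The right $U$-uniform discreteness of $T$ gives a canonical projection $\pi\colon UT\to T$ (the sets $U\tau$, $\tau\in T$, are pairwise disjoint). By the symmetry between the right and left versions of translation-compactness, assume without loss of generality that $UT$ fails to be \emph{right} translation-compact, so there is a non-relatively compact $L\subseteq G$ with $\bigcap_{b\in F}b^{-1}(UT)$ non-relatively compact for every finite $F\subseteq L$.

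The main construction builds inductively two sequences $(s_n)\subseteq L$ and $(g_n)\subseteq G$, both escaping every compact subset of $G$, with $g_n\in\bigcap_{i=1}^n s_i^{-1}(UT)$. At each stage two sparsity conditions are imposed, where $T_1^{(k)}=\{\pi(s_i g_j):i\leq j\leq k\}$ is the target set so far. When choosing $s_n$, demand that $s_n g_m\notin UT_1^{(n-1)}$ for every $m<n$; equivalently, $s_n\notin UT_1^{(n-1)}g_m^{-1}$, a finite union of translates of $\overline U$ which is only a relatively compact subset of $G$, and hence avoidable inside the non-relatively compact $L$. When choosing $g_n$, demand that each new centre $\pi(s_i g_n)$, $i\leq n$, avoids the finite set $\{\pi(s_k g_l):k>l,\,k\leq n,\,l<n\}$; this again only excludes relatively compact pieces of $\bigcap_{i=1}^n s_i^{-1}(UT)$. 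Setting $T_1=\bigcup_n T_1^{(n)}\subseteq T$, the sparsity yields the clean dichotomy $s_n g_m\in UT_1$ for $n\leq m$ and $s_n g_m\notin UT_1$ for $n>m$.

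Now take any $f\in\ell_\infty(G)$ with $f(T_1)=\{1\}$ and $f(G\setminus UT_1)=\{0\}$. If $f$ is not continuous then $f\notin\wap(G)\subseteq\CB(G)$; otherwise suppose, toward a contradiction, that $f\in\wap(G)$, so $f$ is uniformly continuous and extends to $f^\wap\in C(G^\wap)$. Applying Grothendieck's criterion to $(s_n)$ and $(g_m)$ gives $\lim_m\lim_n f(s_n g_m)=0$ immediately from the dichotomy. For the other iteration, factor $s_n g_m = u_{n,m}\pi(s_n g_m)$ with $u_{n,m}\in U$ and $\pi(s_n g_m)\in T_1$; compactness of $\overline U$ and of $G^\wap$ allows passing to a subnet with $u_{n,m}\to u^*\in\overline U$ and $\pi(s_n g_m)\to \tau^*\in \overline{T_1}^\wap$, and joint continuity of the action $G\times G^\wap\to G^\wap$ identifies $\lim s_n g_m$ with $u^*\tau^*$, so $\lim_n\lim_m f(s_n g_m)=f^\wap(u^*\tau^*)$. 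Continuity of $f^\wap$ together with $f\equiv 1$ on $T_1$ forces $f^\wap(\tau^*)=1$.

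The main obstacle will be to guarantee $f^\wap(u^*\tau^*)\neq 0$, since $f$ may take arbitrary values on $UT_1\setminus T_1$. I expect to resolve this either by imposing an additional sparsity requirement on $(g_n)$ ensuring the family $(u_{n,m})$ clusters at $e$ (so that the subnet can be chosen with $u^*=e$, giving $f^\wap(u^*\tau^*)=f^\wap(\tau^*)=1$), or by a continuity argument showing that if $f^\wap(u\tau^*)$ vanished throughout the cluster set of $(u_{n,m})$ in $\overline U$, then $f^\wap$ would vanish on a neighbourhood of $\tau^*$ in $G^\wap$, contradicting $f^\wap(\tau^*)=1$. In either case, Grothendieck's equality of iterated limits fails, forcing $f\notin\wap(G)$.
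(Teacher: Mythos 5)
Your construction of $(s_n)$, $(g_m)$ and of $T_1$ as the set of ``centres'' is essentially the paper's, but the dichotomy you extract from it ($s_ng_m\in UT_1$ for $n\le m$, $s_ng_m\notin UT_1$ for $n>m$) is strictly weaker than what the paper arranges, namely $U^2\{s_mt_n\colon m\le n\}\cap U^2\{s_mt_n\colon m>n\}=\emptyset$ (obtained by taking the avoidance conditions with $U^4$ rather than $U$). The genuine gap is exactly the obstacle you flag in your last paragraph, and neither of your two proposed repairs works. Repair (a) would require choosing $g_m$ in $\bigcap_{i\le m}s_i^{-1}(WT)$ for arbitrarily small neighbourhoods $W$, but the hypothesis only makes the sets $\bigcap_{i}s_i^{-1}(UT)$ non-relatively compact; for smaller $W$ the set $WT$ may be translation-compact and these intersections relatively compact (Example \ref{ex:tc} shows how drastically the translation-compactness of $WT$ can depend on $W$), so you cannot force $u_{n,m}\to e$. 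Repair (b) fails because $f^\wap$ vanishing on $C\tau^*$, where $C\subseteq\overline{U}$ is the cluster set of $(u_{n,m})$, says nothing about a neighbourhood of $\tau^*$ in $G^\wap$: since $e$ need not lie in $C$, this is perfectly compatible with $f^\wap(\tau^*)=1$. The obstacle is real, not just technical: for $f=1_{T_1,\psi}$ with $\psi$ supported in a very small $V\subseteq U$, both iterated limits along your sequences can equal $0$, and Grothendieck's criterion applied to $f$ itself detects nothing.

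The paper's resolution is to exploit the left-translation invariance of $\wap(G)$: it suffices to show that $f_{u^{-1}}(\cdot)=f(u^{-1}\,\cdot)$ fails the double-limit criterion, where $u\in\overline{U}$ is the iterated ultrafilter limit of the correction terms $u_{mn}$. Translating by $u^{-1}$ cancels the correction in the first iterated limit, giving $f^{\luc}(u^{-1}ux)=f^{\luc}(x)=1$ with $x\in\overline{T_1}$; but in the second iterated limit one must now know that $u^{-1}s_mt_n\notin UT_1$ for $m>n$, i.e.\ that the \emph{bad} products stay outside $uUT_1\subseteq U^4\{s_kt_l\colon k\le l\}$, and this is precisely what the stronger $U^2$-versus-$U^2$ separation delivers. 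So to complete your argument you need both ingredients you are missing: the translation trick, and a strengthening of your two sparsity conditions from ``$s_ng_m\notin UT_1$'' to ``$U^2s_ng_m\cap U^2T_1=\emptyset$'' for $n>m$ (achieved by imposing the avoidance conditions with $U^4$ in place of $U$ throughout the induction).
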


  \begin{proof}
We adapt the argument used by Ruppert in \cite{rup} in the discrete
case. Suppose that $UT$ is not right translation-compact. Then there exists a
non-relatively compact subset $L$ of $G$ which contains no finite
subset $F$ for which $\bigcap_{b\in F}b^{-1}UT$ is relatively
compact. Define inductively two sequences $(s_n)$ and $(t_m)$ in $G$
as follows. Start with $s_1\in L$ and let $t_1\in s_1^{-1}T$.
Suppose that $s_1, s_2,...,s_n$ have been selected in $L$ and $t_1,t_2,...,t_n$ have been
selected in $G$. Then we may choose $s_n\in L$ such that
\begin{equation}\label{*}s_n\notin U^4\{s_kt_lt_m^{-1}:1\le k\le l<n,\quad 1\le
m<n\}.\end{equation} This is possible since the latter set is
relatively compact while $L$ is not. Take then \[t_n\in
\bigcap_{m\le n}s_m^{-1}UT,\] but
\begin{equation}\label{**}t_n\notin s_m^{-1}U^4\{s_lt_{k}:1\le k<
l\le n\}\quad\text{ for all}\quad m\le n.\end{equation} This is again
possible because the second set is relatively compact while
the first one is not. Note
 that, by choice,
we have $s_mt_n\in UT$ for every $m\le n.$ So for every $m\le n,$
there is a unique  $t_{mn}\in T$ such
 that $s_mt_n\in Ut_{mn}.$ Let
\[T_1=\{t_{mn}\in T:1\le m\le n<\infty\}.\]
 We claim first that
\begin{equation}\label{***}
U^2\{s_mt_n:m\le n\}\cap U^2 \{s_mt_n:m>n\}=\emptyset.\end{equation}
So let us consider two elements $us_\alpha t_\beta$ and
$vs_{\alpha'}t_{\beta'}$ with $\alpha\le \beta$, $\alpha'>\beta'$
and $u,$ $v\in U^2$. If $\beta\le \beta',$ put $\alpha=k,$
$\beta=l$, $\alpha'=n$ and $\beta'=m,$ then \[us_\alpha
t_\beta=vs_{\alpha'}t_{\beta'}\] implies that
\[us_kt_l=vs_nt_m\quad\text{ with}\quad 1\le k\le l\le m<n,\]
contradicting \eqref{*}. If $\beta>\beta',$ put $\alpha=m,$
$\beta=n$, $\alpha'=l$ and $\beta'=k,$ then  \[us_\alpha
t_\beta=vs_{\alpha'}t_{\beta'}\] implies that
\[us_mt_n=vs_lt_k\quad\text{ with}\quad 1\le k<
l\quad\text{and}\quad  m\le n.\] If $\alpha'\le\beta$, then $l\le n$
and so this clearly contradicts \eqref{**}. If $\alpha'>\beta,$ then
we obtain
\[vs_lt_k=us_mt_n\quad\text{ with}\quad 1\le m\le n< l\quad\text{and}\quad  1\le k< l,\]
contradicting \eqref{*}.

Accordingly,
\begin{equation} U\{t_{mn}:m\le n\}\cap
\{s_mt_n:m>n\}=\emptyset.\end{equation} For if $ut_{pq}=s_mt_n$  for
$u\in U$, $p\le q$ and $m>n$, then $uvs_pt_q=s_mt_n$ for $u\in U$,
$p\le q$, $m>n$ and some $v\in U$, which is not possible by
\eqref{***}.

Let now  $f$ be any bounded function on $G$ with
 $f(T_1)=\{1\}$
and  $f(G\setminus UT_1)=\{0\}$. For each $m\leq n$, let $u_{mn}\in
U$ be such that $u_{mn}t_{mn}=s_m t_n$. Let $\mathcal{U}$ be
a free ultrafilter on $\mathbb{N}$ and consider the limits along
$\mathcal{U}$ of the sequences $\{u_{mn} \colon n\geq m\}$ and $\{t_{mn}\colon
n\geq m\}$,
\[\lim_{n,\mathcal{U}}u_{mn}=u_m\in  \overline{U}\quad\text{
 and}\quad \lim_{n,\mathcal{U}}t_{mn}=x_m\in \overline{T}^{\luc}.\]
 Choose another free ultrafilter $\mathcal{V}$ on $\N$
 and the corresponding limits  \[
 \lim_{m,\mathcal{V}} u_m=u\quad\text{ and}\quad\lim_{m,\mathcal{V}}x_m=x.\]
  Then using the joint continuity property in
$G^\luc$, we see that
\begin{align*}
\lim_{m,\mathcal{V}}\lim_{n,\mathcal{U}}
f_{u^{-1}}(s_{m}t_{n})=
&\lim_{m,\mathcal{V}}\lim_{n,\mathcal{U}}f(u^{-1}u_{mn}
t_{mn})=\lim_{m,\mathcal{V}}f^\luc(u^{-1}u_{m}
x_{m})\\&=
f^\luc(u^{-1}ux)=f^\luc(x)=\lim_{m,\mathcal{V}}\lim_{n,\mathcal{U}}
f(t_{mn})=1\end{align*}
 since, once $m$ is fixed,
all but at most finitely many $n$'s satisfy
$n>m$; while
\[\lim_{n,\mathcal{U}}\lim_{m,\mathcal{V}} f_{u^{-1}}(s_{m}t_{n})=
\lim_{n,\mathcal{U}}\lim_{m,\mathcal{V}} f(u^{-1}s_{m}t_{n})=0\]
since,  by (4),
 $u^{-1}s_{m}t_{n}\notin UT_1$ whenever $m
 >n$.
Therefore, $f_{u^{-1}}$ is not weakly almost periodic and neither is
$f.$
\end{proof}

 \begin{corollary}
   \label{cor:wap2}
   Let $G$ be a locally compact group.  Let $T$ be a
right  $U$-uniformly discrete subset of $G$ for some
neighbourhood $U$ of $e$. If $T$ is an approximable
$\wap(G)$-interpolation set, then
there is a relatively compact neighbourhood $V$ of $e$, $V\subseteq U$
such that
 $VT$ is translation-compact.
 \end{corollary}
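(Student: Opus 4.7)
The strategy is to apply the contrapositive of Lemma \ref{Ruppert}. I want to produce a single symmetric, relatively compact neighbourhood $V\subseteq U$ of $e$ with respect to which $T$ is still right uniformly discrete, together with witnesses $h_{T_1}\in\wap(G)$ for every $T_1\subseteq T$ satisfying $h_{T_1}(T_1)=\{1\}$ and $h_{T_1}(G\setminus VT_1)=\{0\}$. Lemma \ref{Ruppert} will then force $VT$ to be translation-compact.

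First, I would use local compactness to shrink $U$ to a symmetric, relatively compact, open neighbourhood $U_0\subseteq U$ of $e$; since $T$ is right $U$-uniformly discrete, it is also right $U_0$-uniformly discrete.

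Then I would invoke approximability of $T$ applied to $U_0$: this produces open neighbourhoods $V_1,V_2$ with $\overline{V_1}\subseteq V_2\subseteq U_0$ such that, for every $T_1\subseteq T$, there is a function $h_{T_1}\in\wap(G)$ with $h_{T_1}(V_1T_1)=\{1\}$ and $h_{T_1}(G\setminus V_2T_1)=\{0\}$. In particular $h_{T_1}(T_1)=\{1\}$ because $e\in V_1$.

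The only delicate point is that Lemma \ref{Ruppert} needs the uniform-discreteness neighbourhood to be symmetric and relatively compact, whereas $V_2$ is a priori only open. I would resolve this by setting $V=V_2\cup V_2^{-1}$: this is an open, symmetric neighbourhood of $e$, contained in the symmetric set $U_0$, hence relatively compact; and $T$ is still right $V$-uniformly discrete because $V\subseteq U_0$. Since $V_2T_1\subseteq VT_1$, the same $h_{T_1}$ still satisfies $h_{T_1}(G\setminus VT_1)=\{0\}$. Applying the contrapositive of Lemma \ref{Ruppert} with this $V$ then gives $VT$ translation-compact, which is exactly the conclusion. The main ``obstacle'' here is really just the bookkeeping between the neighbourhoods produced by approximability and those admissible in Lemma \ref{Ruppert}; no further ideas are needed beyond what the lemma already supplies.
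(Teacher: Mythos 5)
Your proof is correct and follows essentially the same route as the paper's: both arguments reduce the corollary to Lemma \ref{Ruppert}, the paper phrasing it contrapositively (assuming $VT$ is never right translation-compact for any relatively compact $V\subseteq U$ and concluding that $T$ cannot be approximable) while you argue in the direct direction via the contrapositive of that lemma. Your extra step of symmetrizing $V_2$ inside a symmetric relatively compact $U_0$ so that Lemma \ref{Ruppert} literally applies is sound bookkeeping that the paper in fact glosses over.
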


\begin{proof} Suppose that $VT$ is not right translation-compact for
any relatively compact neighbourhood $V$ of $e$ with $V\subseteq U$.
To check that $T$ is not an approximable
 $\wap(G)$-interpolation set, let $V_1,\,V_2$ be open, relatively compact
 neighbourhoods of $e$ with $\overline{V_1}\subseteq V_2
 \subseteq U$.
Since $V_2T$ is not right translation-compact, we may apply
 Lemma \ref{Ruppert} to find $T_1\subseteq T$
  such that no bounded function  $f\colon G\to \C$ with
  $f(T_1)=\{1\}$
  and $f(G\setminus V_2T_1)=\{0\}$ is  weakly almost periodic. Therefore,
   $T$ is not an approximable $\wap(G)$-interpolation set.

The argument is symmetric if we suppose that $VT$ is not left translation-compact
for any neighbourhood $V$ of $e.$
\end{proof}

    \begin{corollary}\label{wap_met} Let $G$ be a  metrizable locally compact group.
If $T$ is an  approximable $\wap(G)$-interpolation set,
then $T$ is right $U$-uniformly discrete and  $UT$ is   translation-compact
for some neighbourhood $U$ of the identity.
    \end{corollary}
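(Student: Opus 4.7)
The plan is to combine two results already established in the excerpt: Theorem \ref{lem:lucintconv}, which identifies $\luc(G)$-interpolation sets in metrizable groups as exactly the right uniformly discrete sets, and Corollary \ref{cor:wap2}, which upgrades right uniform discreteness to translation-compactness of a neighbourhood-dilate. First I would observe that an approximable $\wap(G)$-interpolation set is in particular a $\wap(G)$-interpolation set, and that the inclusion $\wap(G)\subseteq\luc(G)$ recorded in the preliminaries means every $\wap(G)$-extension is automatically an $\luc(G)$-extension; so $T$ is also an $\luc(G)$-interpolation set. Metrizability of $G$ then permits the application of Theorem \ref{lem:lucintconv}, yielding a neighbourhood $U_0$ of $e$ such that $T$ is right $U_0$-uniformly discrete.

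Next I would feed this $U_0$ into Corollary \ref{cor:wap2} (which makes no metrizability assumption), obtaining a relatively compact neighbourhood $V$ of $e$ with $V\subseteq U_0$ such that $VT$ is translation-compact. The remaining bookkeeping is to notice that right $U_0$-uniform discreteness is inherited by any smaller neighbourhood: if $V\subseteq U_0$ and $s\neq s'$ in $T$, then $Vs\cap Vs'\subseteq U_0 s\cap U_0 s'=\emptyset$, so $T$ is right $V$-uniformly discrete as well. Setting $U:=V$ gives a single neighbourhood that simultaneously witnesses both required properties. There is no genuine obstacle in this argument — the substantive work has already been discharged in Theorem \ref{lem:lucintconv} and Corollary \ref{cor:wap2}, and the only point requiring care is that the same $U$ serve both purposes, which is guaranteed precisely by the clause $V\subseteq U_0$ in the conclusion of Corollary \ref{cor:wap2}.
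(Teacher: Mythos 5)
Your proof is correct and follows exactly the route the paper takes: the paper's own proof of this corollary is a one-line appeal to Theorem \ref{lem:lucintconv} and Corollary \ref{cor:wap2}, and your write-up simply supplies the (correct) bookkeeping, including the observation that right $U_0$-uniform discreteness passes to the smaller neighbourhood $V$ so that a single $U:=V$ witnesses both conclusions.
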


    \begin{proof} This is an immediate consequence of Theorem \ref{lem:lucintconv} and Corollary \ref{cor:wap2}.
    \end{proof}

The following result may be known at least in the discrete case, but
    we have found no reference to it in the literature.

    \begin{corollary} Let $G$ be a  metrizable locally compact Abelian group.
If $T$ is a $B(G)$-interpolation set (i.e., a topological Sidon
set), then $T$ is right $U$-uniformly discrete and  $UT$ is   translation-compact
for some neighbourhood $U$ of the identity.
   In particular, Sidon subsets of discrete groups are
translation-finite.
    \end{corollary}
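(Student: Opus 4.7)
The plan is to chain together three results already established in the paper: Proposition \ref{prop:bg}, the inclusion $\B(G) \subseteq \wap(G)$, and Corollary \ref{wap_met}.

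First I would invoke Proposition \ref{prop:bg}: since $G$ is metrizable locally compact Abelian and $T$ is a $B(G)$-interpolation set, it is in fact an approximable $\B(G)$-interpolation set. Next, recalling the chain of inclusions
\[
\cnaught\oplus\ap(G)\subseteq\B(G)\subseteq \wap(G)
\]
noted in Section 2, every function in $\B(G)$ is weakly almost periodic. Hence if the approximating functions $h\in \B(G)$ supplied by the definition of approximable $\B(G)$-interpolation set live in $\B(G)\subseteq\wap(G)$, the same set $T$ is automatically an approximable $\wap(G)$-interpolation set: the extension clause of Definition \ref{def:main}(i) is inherited from $\B(G)\subseteq\wap(G)$, and the same witnesses $h$ serve for clause (ii).

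Having verified that $T$ is an approximable $\wap(G)$-interpolation set in a metrizable locally compact group, Corollary \ref{wap_met} applies directly and yields a neighbourhood $U$ of the identity such that $T$ is right $U$-uniformly discrete and $UT$ is translation-compact, which is the main conclusion.

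For the final sentence, specialize to a discrete Abelian group $G$. Then $G$ is trivially metrizable and locally compact, so the previous paragraph applies with $U=\{e\}$ (any finite neighbourhood of $e$ works and $UT=T$). Since in a discrete group the relatively compact subsets are exactly the finite ones, the translation-compactness of $T$ coincides with translation-finiteness in the sense of Ruppert. I do not expect any substantive obstacle here: the entire argument is a short concatenation of previously proved results, and the only point requiring a line of care is the observation that the witnesses $h\in\B(G)$ from Proposition \ref{prop:bg} automatically lie in $\wap(G)$, so that approximability transfers from $\B(G)$ to $\wap(G)$ without change.
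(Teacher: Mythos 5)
Your proposal is correct and follows essentially the same route as the paper: Proposition \ref{prop:bg} to upgrade $T$ to an approximable $\B(G)$-interpolation set, the inclusion $\B(G)\subseteq\wap(G)$ to transfer both clauses of Definition \ref{def:main} and conclude that $T$ is an approximable $\wap(G)$-interpolation set, and then Corollary \ref{wap_met} (itself a combination of Theorem \ref{lem:lucintconv} and Corollary \ref{cor:wap2}) to obtain the uniform discreteness and translation-compactness. The closing observation that translation-compact means translation-finite in the discrete case is exactly the intended reading of the last sentence.
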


    \begin{proof} $B(G)$-interpolation sets are,
     by Proposition \ref{prop:bg}, approximable $\B(G)$-interpo\-lation
     sets,
      and so   they are approximable $\wap(G)$-interpolation sets.
Corollary \ref{wap_met} finishes the proof.
    \end{proof}

\begin{theorem}\label{inherit} Let $G$ be a locally compact $E$-group
with identity $e$, let $T$ be an $E$-set  which is right uniformly
discrete with respect to some neighbourhood $U$ of $e$ and let  $V$
be   a neighbourhood of $e$ with $V^2\subseteq U$. Then the following
statements are equivalent.
\begin{enumerate}
\item Every function in $\uc(G)$ which is supported in $VT$ is in $\wap(G)$.
\item For every neighbourhood $W$ of $e$ with $\overline W\subseteq V$, every function in $\uc(G)$ which is supported in $W T$ is in $\wap(G)$.
\item $VT$ is right translation-compact.
\item For every neighbourhood $W$ of $e$ with $\overline W\subseteq V,$ $WT$ is right translation-compact.
\end{enumerate}
\end{theorem}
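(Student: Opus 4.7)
The plan is to close a cycle among the four statements. Two implications are trivial from the containment $WT \subseteq VT$ whenever $\overline{W}\subseteq V$: a function supported in $WT$ is supported in $VT$, giving (i)\,$\Rightarrow$\,(ii); and $\bigcap_{b\in F}b^{-1}(WT)\subseteq \bigcap_{b\in F}b^{-1}(VT)$, so relative compactness of the larger intersection implies that of the smaller, giving (iii)\,$\Rightarrow$\,(iv). The core step is the generic equivalence (i)\,$\Leftrightarrow$\,(iii), valid for any neighbourhood $V$ satisfying the theorem's hypotheses; applying it at each $W$ with $\overline{W}\subseteq V$ yields (ii)\,$\Leftrightarrow$\,(iv), and a final inheritance argument proving (iv)\,$\Rightarrow$\,(iii) closes the cycle.

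For (iii)\,$\Rightarrow$\,(i), I would invoke Lemma \ref{lem:tc}: any $f\in\uc(G)=\luc(G)\cap\ruc(G)$ supported in $VT$ is weakly almost periodic once $VT$ is translation-compact. The right translation-compactness of (iii) directly controls the iterated limit $\lim_n\lim_m f(s_nt_m)$ for non-accumulating sequences in Grothendieck's criterion, and the $E$-group/$E$-set hypotheses, namely that $\bigcap_{t\in T\cup T^{-1}}t^{-1}Wt$ remains a neighbourhood of $e$ for every neighbourhood $W$ of $e$, are what let this one-sided control also handle the symmetric limit $\lim_m\lim_n f(s_nt_m)$.

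For (i)\,$\Rightarrow$\,(iii), I would argue by contraposition. After shrinking $V$ to a symmetric, relatively compact subneighbourhood if needed, assume $VT$ is not right translation-compact. Lemma \ref{Ruppert} supplies $T_1\subseteq T$ such that no bounded $f$ with $f(T_1)=\{1\}$ and $f(G\setminus VT_1)=\{0\}$ is weakly almost periodic; the $T_1$ so produced is necessarily infinite and, being a subset of the $E$-set $T$, is itself an $E$-set. Fix a symmetric neighbourhood $W_0$ of $e$ with $W_0^2\subseteq V$ and a $\psi\in\uc(G)$ with $\psi(e)=1$ and $\psi(G\setminus W_0)=\{0\}$, available by Remark \ref{remrd}. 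Lemma \ref{lem:tcc} then gives $1_{T_1,\psi}\in\uc(G)$ supported in $W_0T_1\subseteq VT_1$, equal to $1$ on $T_1$ and vanishing off $W_0T_1$. This function meets the Ruppert conditions and is therefore not in $\wap(G)$, contradicting (i).

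The inheritance step (iv)\,$\Rightarrow$\,(iii) combines the two techniques: assuming $VT$ is not right translation-compact, Lemma \ref{Ruppert} at $V$ again yields $T_1$ with the non-WAP property, and the construction above produces $1_{T_1,\psi}\in\uc(G)$ supported in $W_0T_1$ for some $W_0$ with $\overline{W_0}\subseteq V$ and $W_0^2\subseteq V^2\subseteq U$. Hypothesis (iv) says $W_0T$ is right translation-compact, so the already-proved direction (iii)\,$\Rightarrow$\,(i), applied with $W_0$ in place of $V$, forces $1_{T_1,\psi}\in\wap(G)$, contradicting Lemma \ref{Ruppert}. The main obstacle throughout is running (iii)\,$\Rightarrow$\,(i) from one-sided (right) translation-compactness alone; this is precisely the step where the $E$-group and $E$-set hypotheses become indispensable, bridging the gap to the two-sided translation-compactness nominally required by Lemma \ref{lem:tc}.
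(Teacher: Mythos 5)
Your proposal is correct in outline and rests on the same two pillars as the paper---Lemma \ref{lem:tc} for (iii)\,$\Rightarrow$\,(i) and Lemma \ref{Ruppert} for the converse direction---but it closes the cycle along a genuinely different path. The paper's most laborious step is a \emph{direct} proof of (ii)\,$\Rightarrow$\,(i): given $f\in\uc(G)$ supported in $VT$, it manufactures neighbourhoods $W_n\subseteq V_n\subseteq \overline{V_n}\subseteq V$ with $|f|\le \frac1n$ on $VT\setminus W_nT$, forms $\Phi_n=1_{T,\varphi_n}f$ supported in $V_nT$ (hence weakly almost periodic by (ii)), checks $\|f-\Phi_n\|_\infty\le\frac1n$, and concludes by norm-closedness of $\wap(G)$. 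You avoid this approximation entirely: your (iv)\,$\Rightarrow$\,(iii) reruns the Ruppert contradiction at an interior neighbourhood $W_0$ with $\overline{W_0}\subseteq V$, observing that $1_{T_1,\psi}$ is supported in $W_0T$ and is therefore forced into $\wap(G)$ by (iv) together with the already-established (iii)\,$\Rightarrow$\,(i) applied at $W_0$, while Lemma \ref{Ruppert} forbids this; combined with the trivial inclusions this yields all four equivalences. Your route is more economical but makes every nontrivial implication pass through Lemma \ref{Ruppert}, whereas the paper's (i)\,$\Leftrightarrow$\,(ii) is independent of it. Two caveats, both of which you share with (rather than add to) the paper's own argument: first, Lemma \ref{Ruppert} is stated for a symmetric, relatively compact neighbourhood, while the $V$ of Theorem \ref{inherit} only satisfies $V^2\subseteq U$, and your ``shrink $V$'' fix does not work as stated---replacing $V$ by a smaller $V'$ changes statement (iii), and $V'T$ may well be right translation-compact when $VT$ is not (compare Example \ref{ex:tc}); second, Lemma \ref{lem:tc} as written requires two-sided translation-compactness while (iii) supplies only the right-handed version---you at least flag this mismatch and attribute the repair to the $E$-hypotheses without carrying it out, which is no worse than the paper, which cites the lemma without comment.
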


\begin{proof}
The implication (i) $\Longrightarrow$ (ii) is clear.

For the converse, let  $f\in \uc(G)$ be supported in $VT$. Since
$f\in\uc(G),$ we can find for each $n\in\N$, open sets $W_n$ and
$V_n$ such that $\overline{W_n}\subseteq V_n\subseteq \overline{
V_n}\subseteq V$, and $|f (x)| \le\frac 1n$ whenever $x\in V
T\setminus W_n T$. To obtain such neighbourhoods, we take for each
$n\in \mathbb{N}$, a neighbourhood $U_n$ of the identity such that
$|f (x)-f (y)| <\frac 1n$ whenever $xy^{-1}\in U_n$. Then let $W_n =
V\setminus \overline{U_n (V^2\setminus V )}.$ It is clear that $u\in
U_n$ implies $|f (uyt)- f (yt)| < \frac1n$ for every $t\in  T$  and
$y\in G.$  But if $y\in V^2\setminus V$ then $f (yt) = 0$ because
$T$ is $V^2$-uniformly discrete and $f$ is supported in $VT$. It
follows that $|f (ut)| < \frac1n$ whenever
 $u\in U_n(V^2\setminus V)$ and $t\in T.$ Now note that if $v\in V$ and $t\in T$  are such
that $vt\in VT\setminus W_n T,$ then $v\in \overline{U_n
(V^2\setminus V)}$, and therefore $|f (vt)| \le\frac1n.$ Note in
addition that $\overline{W_n}\subseteq U.$ For if $x\in
\overline{W_n}$ then $x\notin U_n (V^2\setminus V)$, and in
particular $x\notin V^2\setminus V.$ Since $x\in\overline V
\subseteq V^2$, we conclude that $x\in V.$ Now it is easy to find a
neighbourhood $V_n$ of the identity such that $\overline{W_n}\subseteq
V_n\subseteq\overline {V_n}\subseteq  V.$

Having obtained these two families $\{W_n\}_{n\in\mathbb N}$ and
$\{V_n\}_{n\in\mathbb N}$ of neighbourhoods of $e$, we let for each $n\in\mathbb N$,
$\varphi_n\in \uc(G)$ be such that $0 \le\varphi_n(x)\le 1$ for all
$x\in G,$ $\varphi_n= 1$ on $W_n$ and $\varphi_n = 0$ on $G\setminus
V_n$. Then, since $T$ is an $E$-set, we see that
 for every $n\in \mathbb N,$ the function
 $1_{T,\varphi_n}$ is in $\uc(G)$ and so
 is $\Phi_n = 1_{T,\varphi_n} f.$ Since the latter function is supported on $V_n T$
and $\overline{V_n}\subseteq  V$, we have by hypothesis that
$\Phi_n\in\wap(G)$. It only remains to observe that $\|f-
\Phi_n\|_\infty\le\frac 1n.$ To see this, note that $\Phi_n$ and $f$
only differ on $(V \setminus W_n)T$. But, if $vt\in V T \setminus
W_n T$, then $|f (vt)| \le\frac1n$ and so \[ |f (vt) - \Phi_n (vt )|
= |f (vt)| {\cdot} |1 - \varphi_n (v)| \le \frac1n.\]
Thus, $f\in\wap(G)$ as required for Statement (i).
\medskip

To prove that (i) implies (iii), assume (i) and suppose for a
contradiction  that $VT$ is not translation-compact. Let then $T_1$
be the set provided by Lemma \ref{Ruppert} for $V$. Then we take
any right uniformly continuous function $\varphi$ with support
contained in $V$ and value $1$ on $e$ and consider the function
$f=1_{T_1,\varphi}$.
Then $f$ is in $\uc(G)$ by (iii) of Lemma \ref{lem:tcc},  has its
support contained in $ VT_1$ and takes the value 1 on $T_1$.   Lemma
\ref{Ruppert} proves that $f$  cannot be weakly almost periodic.
This contradiction with (i) shows that $VT$ must be
translation-compact.

\medskip
In the same way, we prove that (ii) implies (iv).

Lemma \ref{lem:tc} proves  that (iii) implies (i) and (iv) implies
(ii).
\end{proof}

\begin{theorem}\label{wap} Let $G$ be a locally compact $E$-group
with identity $e$, let $T$ be an $E$-set  which is right uniformly
discrete with respect to some neighbourhood $U$ of $e$ and let  $V$
be   an open,  relatively compact   neighbourhood of $e$ with $V^2\subseteq U$. Then the
following statements are equivalent.
\begin{enumerate}
\item Every function in $\uc(G)$ which is supported in $V T$ is in $\wap(G)$.
\item $VT$ is right translation-compact.
\item  For every $\varphi\in\uc(G)$ with support contained in $V$, the function $1_{T,\varphi}$ is in  $\wap(G)$ and its extension $1_{T,\varphi}^\wap$ to $G^\wap$ is zero on $G^*G^*$.
\item For every neighbourhood $W$ of
 $e$ with $W\subseteq V$,
  the set $W\overline{ T}^{\wap}$ is open in $G^\wap$,  and $\overline{T}^{\wap}\subseteq G^{\wap}\setminus G^*G^*$ (here $\overline T^\wap$ is the closure of $T$ in $G^\wap$).
\item $T$ is a $\wap(G)$-interpolation set, for
 every $\varphi\in\luc(G)$ with support contained in $\overline V,$ the function $1_{T,\varphi}$ is in $\wap(G)$
and $\overline V\overline{T}^{\wap}=\overline V\times \beta T$, where $\beta T$ is the Stone-\v Cech compactification of
$T.$
\end{enumerate}
Moreover,
 $T$ is an approximable $\wap(G)$-interpolation set if and
if and only if every neighbourhood  of $e$ contains a second relatively compact neighbourhood
of $e$ for which one (and so all) of the above statements holds.
\end{theorem}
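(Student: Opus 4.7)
The plan is to obtain the five-way equivalence by chaining implications around Theorem \ref{inherit}, which already handles (i) $\Leftrightarrow$ (ii), and then to attach (iii), (iv), (v) using Lemmas \ref{lem:tcc} and \ref{lem:tc} together with the structural properties of $G^\wap$. The implication (ii) $\Rightarrow$ (iii) is immediate: applying Lemma \ref{lem:tcc}(iii) to the constant function $1$ on $T$ gives $1_{T,\varphi}\in\uc(G)\cap\wap(G)$ whenever $\varphi\in\uc(G)$ has support in $V$, and Lemma \ref{lem:tc} forces the extension $1_{T,\varphi}^\wap$ to vanish on $G^*G^*$ since $1_{T,\varphi}$ is supported on the translation-compact set $VT$.

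The main obstacle is the reverse direction (iii) $\Rightarrow$ (ii). I would argue by contradiction: assuming $VT$ is not right translation-compact (the left case being symmetric), I would adapt the construction in Lemma \ref{Ruppert} to produce sequences $(s_m), (t_n)$ neither accumulating in $G$, with $s_m t_n = u_{mn} t_{mn}\in VT$ for $m\le n$ (where $u_{mn}\in V$, $t_{mn}\in T$) and, crucially, $s_m t_n\notin VT$ for $m>n$. For $\varphi$ supported in $V$ with $\varphi(e)=1$, translating by $u^{-1}$ (where $u$ is an iterated ultrafilter limit of the $u_{mn}$) makes the two iterated ultrafilter limits of $1_{T,\varphi}(u^{-1}s_m t_n)$ disagree---one equals $1$, the other equals $0$---contradicting, via Grothendieck's criterion, the vanishing of $1_{T,\varphi}^\wap$ on $G^*G^*$. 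The delicate point is strengthening Ruppert's construction from a subset $T_1\subseteq T$ to $T$ itself; this is feasible because the failure of translation-compactness of $VT$ allows one to avoid a suitable relatively compact set at each inductive step so that $s_m t_n \notin VT$ (not merely $\notin VT_1$) for $m>n$.

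For (iii) $\Leftrightarrow$ (iv), given a neighbourhood $W$ with $\overline W\subseteq V$, a bump $\varphi\in\uc(G)$ with $\varphi(W)=\{1\}$ and support in $V$ yields $1_{T,\varphi}^\wap(w\tau)=1$ for every $w\in W$ and $\tau\in\overline{T}^\wap$, using joint continuity of the action $G\times G^\wap\to G^\wap$. Together with uniqueness of the decomposition granted by right uniform discreteness, this identifies $W\overline{T}^\wap$ with the open super-level set $\{1_{T,\varphi}^\wap > 1/2\}\cap \overline{V}\overline{T}^\wap$, giving its openness in $G^\wap$. The inclusion $\overline{T}^\wap\subseteq G^\wap\setminus G^*G^*$ is direct from vanishing of $1_{T,\varphi}^\wap$ on $G^*G^*$ and its value $1$ on $T$; the reverse (iv) $\Rightarrow$ (iii) is the routine transfer back to the functional picture. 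For (iv) $\Leftrightarrow$ (v), the joint continuity of $G\times G^\wap\to G^\wap$ combined with right uniform discreteness of $T$ turns the multiplication $\overline{V}\times\overline{T}^\wap\to\overline{V}\,\overline{T}^\wap$ into a continuous bijection of compact Hausdorff spaces, hence a homeomorphism; the condition $\overline{T}^\wap\subseteq G^\wap\setminus G^*G^*$ makes $T$ relatively discrete in $\overline{T}^\wap$ and, combined with interpolation (which follows from (i)), identifies $\overline{T}^\wap$ with $\beta T$. The statement about $1_{T,\varphi}$ for $\varphi\in\luc(G)$ with support in $\overline{V}$ then follows by the uniform approximation scheme from Theorem \ref{inherit}.

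For the Moreover clause: if $T$ is an approximable $\wap(G)$-interpolation set, then for any neighbourhood $U$ of $e$ contained in a neighbourhood witnessing right uniform discreteness, Corollary \ref{cor:wap2} produces a relatively compact $V\subseteq U$ such that $VT$ is translation-compact, so (ii) holds for this $V$. Conversely, assume every neighbourhood $U$ of $e$ contains a relatively compact $V\subseteq U$ satisfying the equivalent conditions. Given $U$, pick such $V$ and open sets $V_1, V_2$ with $\overline{V_1}\subseteq V_2\subseteq V$; choose $\psi\in\uc(G)$ with $\psi(V_1)=\{1\}$ and $\psi(G\setminus V_2)=\{0\}$. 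For each $T_1\subseteq T$ the function $h := 1_{T_1,\psi}$ is supported on $V_2 T_1\subseteq VT$, which inherits translation-compactness, so $h\in\wap(G)$ by Lemma \ref{lem:tcc}(iii); by construction $h(V_1 T_1)=\{1\}$ and $h(G\setminus V_2 T_1)=\{0\}$, establishing approximability.
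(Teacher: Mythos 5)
Your architecture differs from the paper's at one crucial joint, and that is where the gap lies. The paper never proves (iii) $\Rightarrow$ (ii) directly: it closes the cycle as (ii) $\Rightarrow$ (iii) $\Rightarrow$ (iv) $\Rightarrow$ (i) $\Leftrightarrow$ (ii), where (iv) $\Rightarrow$ (i) is a short double-limit argument using only $\overline{T}^{\wap}\cap G^*G^*=\emptyset$: if $\lim_n\lim_m f(s_nt_m)\neq 0$ for $f\in\uc(G)$ supported in $VT$, then $s_nt_m\in VT$ eventually, so cluster points $x,y$ in $G^\wap$ satisfy $xy\in\overline{V}\,\overline{T}^{\wap}$, which forces $x\in G$ or $y\in G$, and joint continuity on $G\times G^\wap$ then equates the iterated limits. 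You instead make (iii) $\Rightarrow$ (ii) the load-bearing return implication and propose to prove it by strengthening Lemma \ref{Ruppert} so that $s_mt_n\notin VT$ (rather than merely $\notin VT_1$) for $m>n$. That strengthening does not go through as you justify it: to arrange $s_mt_n\notin VT$ for all $n<m$ you must choose $s_m\in L$ outside $\bigcup_{n<m}VTt_n^{-1}$, and each set $VTt_n^{-1}$ is \emph{not} relatively compact (since $T$ is not), so this is not ``avoiding a suitable relatively compact set at each inductive step''; worse, nothing guarantees that what remains of $L$ after deleting these sets still witnesses the failure of translation-compactness needed to keep choosing the $t_n$ with $\bigcap_{m\le n}s_m^{-1}VT$ non-relatively-compact (it can happen that $L\subseteq VTt_1^{-1}$). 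This is exactly why Lemma \ref{Ruppert} only produces a \emph{subset} $T_1\subseteq T$, and why statement (iii), which speaks only of $1_{T,\varphi}$ for the full set $T$, cannot be contradicted by the unmodified construction.

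Since your only proposed passage from (iii)--(v) back to (i)--(ii) is this implication, the gap breaks the five-way equivalence as you have structured it. Relatedly, your ``(iv) $\Rightarrow$ (iii) is the routine transfer back to the functional picture'' conceals real content: deducing that $1_{T,\varphi}$ is weakly almost periodic from the purely topological hypotheses of (iv) is precisely the double-limit argument sketched above, and writing it out would in fact repair your architecture by giving (iv) $\Rightarrow$ (i) directly, making the problematic (iii) $\Rightarrow$ (ii) unnecessary. Similarly, the direction (v) $\Rightarrow$ (i) is not supplied by ``the uniform approximation scheme from Theorem \ref{inherit}'' alone: the paper needs Pym's identification of $\overline{VT}^{\luc}$ with $\overline{V}\times\beta T$ to transfer a $\uc(G)$-function supported in $WT$ to a continuous function $\widetilde{f}$ on $G^{\wap}$ and then factor $f=\widetilde{f}\cdot 1_{T,\varphi}$. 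The remaining pieces of your proposal --- (i) $\Leftrightarrow$ (ii) via Theorem \ref{inherit}, (ii) $\Rightarrow$ (iii) via Lemmas \ref{lem:tcc} and \ref{lem:tc}, (iii) $\Rightarrow$ (iv) via the super-level-set argument, the homeomorphism $\overline{V}\times\beta T\cong\overline{V}\,\overline{T}^{\wap}$ (whose injectivity, note, requires the separating functions $f_{T,\varphi}^\wap(ux)=f^\beta(x)\varphi(u)$ and not merely uniform discreteness), and the ``Moreover'' clause via Corollary \ref{cor:wap2} and Lemma \ref{lem:tcc} --- do match the paper.
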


\begin{proof}
Theorem \ref{inherit} proves that \emph{Statements (i) and (ii) are
equivalent}.
\medskip

(ii) $\Longrightarrow$ (iii). Since  the function $1_{T,\varphi}$ is in $\uc(G)$ and is
supported in $VT$, (iii) follows from (i) and (ii) along with  Lemma
\ref{lem:tc}.
\medskip

(iii) $\Longrightarrow$ (iv) We first check that $V\overline{T}^\wap$ is
open in $G^\wap$  (this is inspired by a proof done by Pym on
$G^{\luc}$ in \cite{Pym}). Let $vx$ be any point in $V\overline{
T}^\wap$, let $V_0$ and $V_1$ be neighbourhoods of $e$ such that
$V_0v\subseteq V$ and $\overline{V_1}\subseteq V_0$. Let
$\varphi:G\to[0,1]$ be a continuous function with values $1$ on
$\overline{V_1}$ and $0$ outside of $V_0$, and let  $f$ denote the
right translate by $v^{-1}$ of the function $1_{T,\varphi}$; that
is,
\[f(s)=\sum_{t\in T}\varphi(st^{-1}v^{-1}).\]
Since,  by assumption,   $1_{T,\varphi}\in\wap(G)$, $f\in\wap(G)$ as
well. Let then $f^\wap$ be the continuous extension of $f$ to
$G^\wap$, and put \[V_2=\{u\in V:\varphi(uv^{-1})>1/2\}.\]
 Then $V_2\overline{
T}^\wap$ is open in $G^\wap$ since $V_2\overline{ T}^\wap={\left(
f^{\wap}\right)}^{-1}(]1/2,1])$. Therefore, $V_2\overline{ T}^\wap$
is a neighbourhood of $vx$ in $G^\wap$ which is contained in
$V\overline T^\wap.$ This shows that $V\overline{ T}^\wap$ is open in
$G^\wap.$ The same argument applies to show that $W\overline{
T}^\wap$ is open in $G^\wap$ whenever $W$ is a neighbourhood of $e$
contained in $V$.

For the second part of Statement (iv), let $V_1$ and $\varphi$ be
again as before. Clearly, if $ut\in V_1T$, then
$1_{T,\varphi}(ut)=\varphi(u)=1$ and so $\overline{V_1T}^\wap
\subseteq G^\wap \setminus G^*G^*$
 since, by assumption, the function
$1_{T,\varphi}^\wap$ is zero on $G^*G^*$. It follows in particular
that $\overline{T}^\wap \subseteq G^\wap\setminus G^*G^*$.
\medskip

(iv) $\Longrightarrow$ (i). In fact, only the part $\overline{
T}^\wap\cap G^*G^*=\emptyset$ of Statement (iv)
 is needed to deduce
Statement (i). Let $f\in \uc(G)$ be supported in $VT$ and let
$(s_n)$ and $(t_m)$ be sequences in $G$ such that the limits
\[\lim_n\lim_mf(s_nt_m)\quad\text{ and}\quad \lim_m\lim_nf(s_nt_m)\] exist.
Suppose that $\lim_n\lim_mf(s_nt_m)\ne0.$ Then there is $n_0$ such
that for every $n\ge n_0$ there is $m(n)$ such that $f(s_nt_m)\ne 0$
for every $m\ge m(n),$ and so $s_nt_m\in VT$ for every $n\ge n_0$ and
$m\ge m(n).$ It follows that if $x$ and $y$ are cluster points,  in
$G^\wap$, of $(s_n)$ and $(t_m)$, respectively, then $xy\in
\overline{VT}^\wap=\overline{V}\,\overline{ T}^\wap.$ Accordingly,
either $x$ or $y$ must be in $G$ since by assumption $\overline{
T}^\wap\cap G^*G^*=\emptyset$. However, if $x\in G$ (say), then
\[\lim_m\lim_nf(s_nt_m)=\lim_mf(xt_m)=\lim_n\lim_mf(s_nt_m).\] If
$\lim_m\lim_nf(s_nt_m)\ne0,$ we argue in the same way. The case
\[\lim_n\lim_mf(s_nt_m)= \lim_m\lim_nf(s_nt_m)=0\] is trivial.
Therefore, $f\in\wap(G)$, as required for Statement (i).
\medskip

\emph{We have thus far proved that Statements (i) through (iv) are
equivalent.}
\medskip

(i) $\Longleftrightarrow$ (v).  Suppose that every function in
$\uc(G)$ and  supported in $VT$ is weakly almost periodic.

Let $f:T\to\mathbb C$ be any bounded function on $T$ and extend $f$
to $G$ by the function $f_{T,\varphi}$, where $\varphi\in \luc(G)$,
$\varphi(e)=1$ and the support of $\varphi$ is contained in $V$.
Since $f_{T,\varphi}\in \uc(G)$ as seen in
 Lemma \ref{lem:tcc} (iii) and is supported in $VT$,  $f_{T,\varphi}\in \wap(G)$ by
assumption. This shows that any bounded $f$ on $T$ extends to a
weakly almost periodic function on $G$, and so $T$ is a
$\wap(G)$-interpolation set. This also shows in particular that
$1_{T,\varphi}\in\wap(G)$ as required for the first part of the
statement.

  Since $T$ is a $\wap(G)$-interpolation set, $\overline{ T}^\wap$
  is homeomorphic to $\beta T$, with the points of $T$  fixed by the homeomorphism.
  So we may identify $\overline{T}^\wap$ and $\beta T$.
Then, using the joint continuity on $G\times G^\wap,$ we may consider
the continuous surjection $\overline{V}\times \beta T\to
\overline{VT}^\wap$ which extends the multiplication mapping
$V\times T\to VT$. We prove that this extension is injective. Let
$f_{T,\varphi}^\wap$ and $f^{\beta T}$ be the extensions of
$f_{T,\varphi}$ and $f$ to $G^\wap$ and $\beta T,$ respectively, and
let $(u,x)\in U\times \overline{T}^\wap$. Then, since $T$ is right
uniformly discrete with respect to $U$, we have
\begin{equation}\label{main}
f_{T,\varphi}^\wap(ux) =\lim_\alpha\sum_{t\in
T}f(t)\varphi(ut_\alpha t^{-1})=\lim_\alpha f(t_\alpha)\varphi(u)=
 f^{\beta }(x)\varphi(u) \qquad(\ast\ast)\end{equation}
(here $(t_\alpha)$ is a net in $T$ converging to $x$ in $\beta T$ when $x$ is regarded as a point in $\beta$, and converging to $x$ in $G^\wap$ when $x$ is regarded as a point in $\overline{T}^\wap$).
Accordingly, if $(u,x)\ne (v,y)$ in $\overline{V}\times \beta T,$
then the functions $\varphi$ and $f$ may be chosen in \eqref{main}
so that $f_{T,\varphi}^\wap$ separates $ux$ and $vy$ in $G^\wap.$ In
fact if $x= y$ then $u$ and $v$ must be distinct and so we may choose   $\varphi\in\uc(G)$ with support
contained in $V$, $\varphi(u)=1$ and $\varphi(v)=0$. If
$x\ne y$ we may choose $f\in\ell_\infty(T)$ such that $f^\beta(x)=1$
and $f^\beta(y)=0$ and $\varphi\in\uc(G)$ with value $1$ on $\overline V.$
We obtain in
all the cases,
\[f_{T,\varphi}^\wap(ux)=f^\beta(x)\varphi(u)=1\quad\text{ while}\quad f_{T,\varphi}^\wap(vy)=f^\beta(y)\varphi(v)=0.\]
Hence, $ux\ne vy$ in $G^\wap.$
Therefore, $\overline V\times \beta T\to \overline{ VT}^\wap$ is
injective, and so it is a  homeomorphism.

For the converse, we apply Theorem \ref{inherit}. Let $W$ be any
neighbourhood of $e$ with $\overline W\subseteq  V,$ and let $f$ be
any function in $\uc(G)$ supported in $W T.$ Let $f^\luc$ be the
continuous extension of $f$ to $G^\luc,$ and consider its
restriction $f\restr_{\overline{VT}^\luc}$.
  Since $\overline{V}\times \beta T$ and $\overline{VT}^\luc$ are homeomorphic by
\cite{Pym}, and $\overline V\times \beta T$ and $\overline{VT}^\wap$
are homeomorphic by assumption, we may regard
$f\restr_{\overline{VT}^\luc}$
 as a continuous function on
$\overline{VT}^\wap$, and extend it to a continuous function
$\widetilde f$ on $G^\wap.$ Note now that
$\overline{V}^\wap=\overline{V}^\luc=\overline V,$ so
\[\widetilde
f(vt)=f\restr_{\overline{VT}^\luc}(vt)=f^\luc(vt)=f(vt)\quad\text{for
every}\quad v\in \overline V, t\in T.\]

Taking now $\varphi\in\luc(G)$  with support contained in $V$ and
$\varphi(W ) = 1$, we see that $\widetilde f 1_{T,\varphi} = f$.
Indeed, both functions obviously coincide on every  $s\notin V T$.
If $s\in W T$, we have $1_{T,\varphi}(s) = 1$ and so $\widetilde f(s)1_{T,\varphi}(s)=\widetilde f(s)=f(s).$
Finally, if
$s \in V T$ but $s \notin W T$ , then  $\widetilde f(s)1_{T,\varphi}(s)=f(s)1_{T,\varphi}(s)=0=f(s).$

Since $1_{T,\varphi}\in \wap(G)$ by (v),  we have $f \in \wap(G).$
We conclude, with  Theorem \ref{inherit}, that every function in
$\uc(G)$ which is supported in $V T$ is in $\wap(G)$. Hence,
Statement (i) holds.
\medskip

\emph{Therefore, Statements (i)--(v) are equivalent}.
\medskip

We  now prove the last statement. Necessity has been already proved in Corollary \ref{cor:wap2}.

The converse  is an easy check. Fix, to begin with, a bounded
function $f\colon T\to \C$ and a  neighbourhood $U_0$ of the identity.
There is then a relatively compact neighbourhood $V_0\subseteq U_0\cap V$ such that any
$\uc(G)$-function supported in $V_0T$ is weakly almost periodic.
Choose another neighbourhood $V_1$ of the identity with
$\overline{V_1}\subseteq V_0$ and let $T_1\subseteq T$. Consider as well
a function $\psi\in \luc(G)$ with $\psi(V_1)=\{1\}$ and
$\psi(G\setminus V_0)=\{0\}$. By Lemma \ref{lem:tcc} the functions
$f_{T,\psi}$ and $1_{T,\psi}$ are both in $\uc(G)$. Since both of
them are supported in $V_0T$, we have, by assumption, that both
$f_{T,\psi}$ and $1_{T_1,\psi}$ are weakly almost periodic. Since
$f_{T,\psi}$ coincides with $f$ on $T$ and $f$ was arbitrary, $T$ is
a $\wap(G)$-interpolation set. Since $1_{T_1,\psi}(V_1T_1)=\{1\}$
and $1_{T_1,\psi}(G\setminus V_0T_1)=\{0\}$, we see that $T$ is
indeed approximable.
\end{proof}

The whole  Theorem 7 of \cite{rup} and also Proposition 2.4 of
\cite{C4} now follow from Theorem \ref{wap} when $G$ is discrete. We
emphasize here that fact for future reference.
 \begin{corollary}[Theorem 7 of \cite{rup} and Proposition 2.4 of
 \cite{C4}]\label{cor:wap}
 Let $G$ be discrete. A subset $T\subseteq G$ is an approximable
 $\wap(G)$-interpolation set if and only if it is translation-finite
 \end{corollary}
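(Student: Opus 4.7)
My plan is to reduce everything to a direct specialization of Theorem \ref{wap} and Corollary \ref{cor:wap2} to the discrete setting. I would first record the bookkeeping that trivializes all the decorations in those results: in a discrete group $G$, the singleton $\{e\}$ is an open, relatively compact neighbourhood of $e$, so $G$ is an $E$-group, every subset of $G$ is an $E$-set (since $\bigcap_{t\in T\cup T^{-1}}t^{-1}\{e\}t=\{e\}$ is a neighbourhood of $e$), and every subset is both right and left $\{e\}$-uniformly discrete. Moreover, the relatively compact subsets of $G$ are exactly the finite ones, so the one-sided notions ``right (left) translation-compact'' and ``right (left) translation-finite'' coincide, and hence so do the two-sided notions ``translation-compact'' and ``translation-finite''.

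For the forward implication I would assume $T$ is an approximable $\wap(G)$-interpolation set. Taking $U=\{e\}$, the set $T$ is trivially right $U$-uniformly discrete, so Corollary \ref{cor:wap2} produces a relatively compact neighbourhood $V\sub U=\{e\}$, forcing $V=\{e\}$ and hence $VT=T$ to be right translation-compact. The symmetric left-hand version of the same argument, mentioned explicitly at the end of the proof of Corollary \ref{cor:wap2}, gives left translation-compactness as well, so $T$ is translation-finite.

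For the converse I would assume $T$ is translation-finite, hence translation-compact. Taking $U=V=\{e\}$ makes $V^2\sub U$ automatic, $T$ is an $E$-set that is right (and left) $U$-uniformly discrete, and $VT=TV=T$ is translation-compact. Lemma \ref{lem:lucint}(ii) then delivers at once that $T$ is an approximable $\wap(G)$-interpolation set.

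I do not expect a genuine obstacle here: the whole content of the argument is the observation that each hypothesis of the cited results becomes vacuous in the discrete world, with $\{e\}$ playing every role. The only mild care needed is to invoke the right-hand and left-hand versions of the underlying results in parallel, in order to match the two-sided convention built into the definition of translation-finite.
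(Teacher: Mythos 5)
Your proof is correct and follows essentially the same route as the paper, which obtains the corollary by specializing Theorem \ref{wap} to the discrete case with $U=V=\{e\}$; the pieces you invoke (Corollary \ref{cor:wap2} for necessity and Lemma \ref{lem:lucint}(ii) for sufficiency) are exactly the ingredients of that theorem's final statement. The only loose end is that the $E$-group/$E$-set hypotheses of Lemma \ref{lem:lucint}(ii) formally require $G$ and $T$ to be infinite, but the excluded cases are trivial since every finite set is both translation-finite and an approximable $\wap(G)$-interpolation set.
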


\begin{corollary}
Let $G$ be a metrizable $E$-group and let $T\subseteq G$ be an
$E$-set. Then $T$ is an approximable $\wap(G)$-interpolation set if
and only if $T$ is right uniformly discrete with respect to some
neighbourhood $V$ of $e$ such that  $VT$ is translation-compact.
 \end{corollary}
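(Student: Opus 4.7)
The plan is to deduce this corollary as a direct combination of two results already proved in the paper: the forward direction will follow from Corollary~\ref{wap_met} (which itself packages Theorem~\ref{lem:lucintconv} together with Corollary~\ref{cor:wap2}), and the reverse direction will follow from Lemma~\ref{lem:lucint}(ii) after a small neighbourhood-shrinking step. Neither the $E$-group hypothesis nor the $E$-set hypothesis is needed for the forward direction; they enter only through Lemma~\ref{lem:lucint}(ii).

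For the necessity, I would argue as follows. Assume $T$ is an approximable $\wap(G)$-interpolation set. Since $\wap(G)\subseteq \luc(G)$, $T$ is in particular an $\luc(G)$-interpolation set, and the metrizability of $G$ together with Theorem~\ref{lem:lucintconv} forces $T$ to be right $U$-uniformly discrete for some neighbourhood $U$ of $e$. Corollary~\ref{cor:wap2} then provides a relatively compact neighbourhood $V\subseteq U$ such that $VT$ is translation-compact; since $V\subseteq U$ the set $T$ remains right $V$-uniformly discrete, so a single neighbourhood $V$ witnesses both required properties simultaneously.

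For the sufficiency, suppose $T$ is right $V$-uniformly discrete and $VT$ is translation-compact. I would choose a symmetric neighbourhood $V_1$ of $e$ with $V_1^2\subseteq V$. The key observation is that $V_1 T\subseteq VT$ inherits translation-compactness: for any non-relatively compact set $L$, a finite $F\subseteq L$ witnessing translation-compactness of $VT$ also works for $V_1T$, because $\bigcap_{b\in F}b^{-1}(V_1T)\subseteq \bigcap_{b\in F}b^{-1}(VT)$ (and symmetrically on the right) lies inside a relatively compact set and is therefore itself relatively compact. Taking the uniform-discreteness parameter to be $V$ and the translation-compactness parameter to be $V_1$, the hypotheses of Lemma~\ref{lem:lucint}(ii) are met, and the lemma (using the $E$-group and $E$-set assumptions) delivers that $T$ is an approximable $\wap(G)$-interpolation set.

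Since the corollary reduces to a concatenation of results already established in the paper, I do not anticipate any substantial obstacle. The only content-bearing step is the elementary verification that $V_1T$ is translation-compact whenever $VT$ is, which allows the single-neighbourhood formulation of the statement to be matched against the two-neighbourhood formulation in Lemma~\ref{lem:lucint}(ii).
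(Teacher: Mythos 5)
Your proof is correct and follows essentially the same route as the paper: the paper likewise obtains right uniform discreteness from Theorem \ref{lem:lucintconv}, gets the translation-compact neighbourhood from the necessity part of Theorem \ref{wap} (which is exactly Corollary \ref{cor:wap2}), and derives sufficiency from the $E$-group machinery (the paper invokes the ``moreover'' clause of Theorem \ref{wap}, whose sufficiency direction rests on the same construction as Lemma \ref{lem:lucint}(ii)). Your explicit check that a subset of a translation-compact set is again translation-compact, which reconciles the single-neighbourhood formulation of the corollary with the two-neighbourhood hypotheses of Lemma \ref{lem:lucint}(ii), is a detail the paper leaves implicit but needs as well.
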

\begin{proof}
If $T$ is an approximable $\wap(G)$-interpolation set, then it is an
approximable $\luc(G)$-interpolation set. So by Theorem
\ref{lem:lucintconv}, $T$ is right uniformly discrete with respect to some
neighbourhood $V$ of $e$. By Theorem
\ref{wap}, we may choose $V$ such that $VT$ is translation-compact.

The converse follows from Theorem \ref{wap}.
\end{proof}

In general when $G$ is not an $E$-group,  the theorem fails even if $G$ is metrizable.

\begin{remark}
Let  $G=SL(2,\R)$ and $VT$ be a $t$-set in $G$  as constructed in
Example \ref{$t$-set}. Then $T$ is right uniformly discrete,  $VT$ is translation-compact
but $T$ is not an approximable $\wap(G)$-interpolation set
since $SL(2,\R)$ is minimally weakly almost periodic group, that is,
$\wap(G)=\C\oplus C_0(G)$.
\end{remark}

Before we prove our next main theorem in this section, we need the following lemmas, the proof of  the second one relies on a Ramsey theoretic theorem of Hindman.
Recall from \cite[Definition 5.13]{HS}
that if $(x_n)$ is a a sequence in $G$ then the
sequence $(y_n)$ is a {\it product subsystem of $(x_n)$} if and only if there is a sequence
$(H_n)$ of finite subsets of $\N$  such that for  every $n\in \N$,
\[\max H_n < \min H_{n+1}\quad\text{ and}\quad y_n = \Pi_{t\in H_n}x_t, \]
where $\Pi_{t\in H_n}x_t$ is used to denote the product in decreasing order of indices, contrarily to what is chosen in \cite{HS}. So, for instance, if $H_n=\{2,6,23\}$, then $\Pi_{t\in H_n}x_t=x_{23}x_6 x_2$.

We will also need to use  the finite product set $FP((x_n)_n)$ associated to a sequence $(x_n)_n\subseteq G$. It is is defined as
\[ FP((x_n)_n)=\left\{\prod_{n\in F} x_n \colon F\in \N^{<\omega}\right\}.\]

\begin{lemma}\label{subsystem} If $G$ is a non-compact locally compact group and $U$ is  a relatively compact symmetric neighbourhood
of $e$, then $G$ contains a sequence $(x_n)$ such that every product subsystem of $(x_n)$ is $U$-uniformly discrete.
\end{lemma}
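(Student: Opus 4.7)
The plan is to build $(x_n)$ by a direct induction in which each new element needs only to avoid a relatively compact subset of $G$. First I reformulate the goal: since $U$ is symmetric, $T\subseteq G$ is $U$-uniformly discrete exactly when $ss'^{-1}\notin U^2$ and $s^{-1}s'\notin U^2$ for all distinct $s,s'\in T$. Any two distinct elements of a product subsystem $(y_n)$ come from an order-separated pair $(F_1,F_2)$ of disjoint nonempty finite subsets of $\N$ with $\max F_1<\min F_2$, so it will suffice to construct $(x_n)$ such that, for every such pair, the decreasing products $w_i=\prod_{t\in F_i}x_t$ satisfy $w_2 w_1^{-1}\notin U^2$ and $w_1^{-1} w_2\notin U^2$.

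Starting from any $x_1\in G$, I will pick $x_n$ at stage $n$ as follows. By the induction hypothesis the condition already holds for pairs $(F_1,F_2)\subseteq\{1,\ldots,n-1\}$. The new constraints come from pairs containing $n$; since $\max F_1<\min F_2$ forces $n\in F_2$, we may write $F_2=\{n\}\cup F_2'$ with $F_2'\subseteq\{\max F_1+1,\ldots,n-1\}$, and $w_2=x_n w_2'$ where $w_2'$ is the decreasing product over $F_2'$ (with $w_2'=e$ if $F_2'=\emptyset$). The two required non-membership conditions become
\[
x_n\notin U^2 w_1(w_2')^{-1}\cup w_1 U^2 (w_2')^{-1}.
\]
Each set on the right is a two-sided translate of the relatively compact set $U^2$, and only finitely many pairs $(F_1,F_2')$ of subsets of $\{1,\ldots,n-1\}$ occur, so the union of forbidden positions at stage $n$ is relatively compact. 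Non-compactness of $G$ then lets me choose $x_n$ outside this union, completing the inductive step.

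Finally, any two distinct elements $y_k,y_l$ ($k<l$) of a product subsystem with index sets $(H_n)$ arise from the order-separated pair $(H_k,H_l)$, which is handled at stage $\max H_l$ of the construction. Hence $y_l y_k^{-1}\notin U^2$ and $y_k^{-1} y_l\notin U^2$, proving $U$-uniform discreteness of the subsystem. The only delicate point is the bookkeeping: one must check that every order-separated pair is treated at exactly one stage (namely $\max F_2$) and that the finite union of forbidden translates of $U^2$ remains relatively compact at every stage. Once that is verified, non-compactness of $G$ carries out the rest of the argument with no further difficulty.
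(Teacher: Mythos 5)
Your proof is correct and follows essentially the same route as the paper's: an inductive construction in which $x_n$ is chosen outside a relatively compact set formed from finitely many two-sided translates of $U^2$ by products of the earlier terms. The only difference is bookkeeping --- the paper lumps all forbidden elements into $F_{n-1}U^2F_{n-1}^{-1}\cup F_{n-1}^{-1}U^2F_{n-1}$, where $F_p$ is the set of all products over subsets of $\{1,\ldots,p\}$, and verifies discreteness by contradiction, while you enumerate the order-separated pairs directly and check both one-sided separation conditions explicitly.
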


\begin{proof} Start by fixing an arbitrary $x_1$ in $G.$ Once $x_1,...,x_n$ have been chosen, consider the finite set
\[F_p=\{\Pi_{t\in H}x_t: H\subseteq \{1,...,p\}\}.\]
Then choose \[x_{n+1}\notin F_nU^2F_n^{-1}\cup F_n^{-1}U^2F_n.\]

Suppose now that $(y_n)$ is a product subsystem of $(x_n).$
If $(y_n)$ is not $U$-uniformly discrete, we can find $n<m$ such that $y_mU\cap y_nU\ne\emptyset,$ that is,
$y_m\in y_nU^2$. Now there exist $k(n)$ and  $k(m)$ such that $k(n)<k(m)$, $y_n\in F_{k(n)}$ and $y_m\in F_{k(m)}$.
We may then write  $y_m=x_{k(m)}z_m$ for some $z_m\in F_{k(m)-1}.$
Since $y_m\in y_nU^2,$ it follows that $x_{k(m)}\in F_{k(n)}U^2F_{k(m)-1}^{-1},$ which goes against our
construction.
\end{proof}

\begin{lemma}\label{Hindman} A non-compact locally compact group cannot be the union of finitely many right (respectively, left) translates of a left (respectively,  right) translation-compact set.
\end{lemma}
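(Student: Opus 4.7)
The strategy is a proof by contradiction combining Lemma \ref{subsystem} with Hindman's finite products theorem. I will prove the principal case (right translates of a left translation-compact set) in detail; the parenthetical case reduces to it by inversion, since $T^{-1}$ is left translation-compact whenever $T$ is right translation-compact (use that $(\bigcap bT)^{-1}=\bigcap T^{-1}b^{-1}$ and the inversion homeomorphism preserves relative compactness), and $G=G^{-1}$ turns $\bigcup g_iT$ into $\bigcup T^{-1}g_i^{-1}$.

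Suppose then, towards a contradiction, that $G=\bigcup_{i=1}^n Tg_i$ with $T$ left translation-compact. Fix a relatively compact symmetric neighbourhood $U$ of $e$ and apply Lemma \ref{subsystem} to produce a sequence $(x_k)$ in $G$ all of whose product subsystems are left $U$-uniformly discrete. Colouring $FP((x_k))$ by assigning to each element some index $i$ with the element in $Tg_i$ gives a finite colouring, so Hindman's finite products theorem furnishes an index $i$ and a product subsystem $(y_k)$ of $(x_k)$ with $FP((y_k))\subseteq Tg_i$. Because $\Pi_{t\in H}y_t$ is taken with indices in decreasing order, one has in particular $y_my_k\in Tg_i$ for every $k<m$, which rearranges to
\[
y_m\in Tg_iy_k^{-1}=T\bigl(y_kg_i^{-1}\bigr)^{-1}\qquad\text{for all }m>k.
\]

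Setting $b_k=y_kg_i^{-1}$, the displayed inclusion reads $y_m\in Tb_k^{-1}$ for every $m>k$. The set $L=\{b_k:k\in\N\}$ is the image of $\{y_k\}$ under the homeomorphism $x\mapsto xg_i^{-1}$; since $(y_k)$ is left $U$-uniformly discrete with $U$ relatively compact, it is infinite and not relatively compact, and therefore neither is $L$. The left translation-compactness of $T$ then delivers a finite subset $F=\{b_{k_1},\dots,b_{k_r}\}\subseteq L$ such that $\bigcap_{b\in F}Tb^{-1}$ is relatively compact; but for every $m>\max\{k_1,\dots,k_r\}$ the inclusion above gives $y_m\in\bigcap_{j=1}^rTb_{k_j}^{-1}$, placing the infinite tail $\{y_m:m>\max k_j\}$ inside a relatively compact set and contradicting the left $U$-uniform discreteness of $(y_m)$.

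The main delicacy I anticipate is the orientation bookkeeping rather than any single combinatorial step: the decreasing-index convention for $FP$-products is what aligns the argument with \emph{right} translates of a \emph{left} translation-compact set, because the large indices $m$ end up on the left of $y_my_k$ and are thereby captured by the intersection $\bigcap Tb_k^{-1}$ that the defining property of left translation-compactness produces. Handling the reverse case directly would require running the argument in the opposite group and rechecking that Lemma \ref{subsystem} still yields uniformly discrete subsystems for the reversed product order; this is why I prefer the inversion reduction stated at the outset.
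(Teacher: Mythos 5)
Your proof is correct and follows essentially the same route as the paper's: extract the sequence from Lemma \ref{subsystem}, apply Hindman's finite products theorem to get a monochromatic product subsystem $FP((y_k))\subseteq Tg_i$, and observe that the tail $\{y_m: m>\max k_j\}$ lies in every finite intersection $\bigcap_j T(y_{k_j}g_i^{-1})^{-1}$, contradicting left translation-compactness since an infinite uniformly discrete set is not relatively compact. The only additions are your explicit inversion reduction for the parenthetical case (which the paper leaves implicit) and the slightly more careful bookkeeping with $b_k=y_kg_i^{-1}$; both are fine.
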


\begin{proof} Suppose that $G=\bigcup_{j=1}^pTt_j$ for some $T\subseteq G$ and $t_1, t_2,..., t_p\in G.$
Let $(x_n)_n$  be the sequence constructed in Lemma \ref{subsystem}. Then \[FP((x_n)_n)\subseteq \bigcup_{j=1}^p(FP((x_n)_n)\cap Tt_j).\] By \cite[Corollary 5.15]{HS}, there exists a product subsystem $(y_n)_n$ of $(x_n)_n$ with $FP((y_n)_n)$ contained in $Tt_j$ for some $j=1,2,...,p.$ Let now $L=\{y_nt_j^{-1}:n\in\mathbb N\}$, let $\{y_{n_1}{t_j}^{-1}, y_{n_2}t_j^{-1},..., y_{n_k}t_j^{-1}\}$ be any finite subset of $L$ and let $m$ be any integer greater than $\max\{n_1,n_2,...,n_k\}$.
 Then, since  $y_m=(y_my_{n_i})y_{n_i}^{-1}$ and $y_my_{n_i}\in FP((y_n)_n)$   for each $i=1,2,...,k,$ we see that
 \[y_m\in Tt_jy_{n_1}^{-1}\cap Tt_jy_{n_2}^{-1}\cap...\cap Tt_jy_{n_k}^{-1}.\]
 By Lemma \ref{subsystem}, the set $\{y_m:m>\max\{n_1,n_2,...,n_k\}\}$ is uniformly discrete,
 hence cannot be relatively compact, and so
 $T$ is not left translation-compact.
\end{proof}

\begin{theorem}\label{wap=wap_0}
Let $G$ be a locally compact $E$-group with a neighbourhood $U$ of
the identity and a right  $U$-uniformly discrete $E$-set $T$.  Then
Theorem \ref{wap} holds with $\wap_0(G)$ replacing $\wap(G).$ In
particular, the following statements are equivalent.
 \begin{enumerate}
  \item $T$ is  an approximable $\wap(G)$-interpolation set.
 \item $T$ is  an approximable $\wap_0(G)$-interpolation set.
  \item There exists a neighbourhood $V$ of $e$, $V^2\subseteq U$ for which $VT$ is translation-compact.
  \end{enumerate}
\end{theorem}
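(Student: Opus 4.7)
The plan is to reduce the theorem to a single key claim and then verify it using the Ramsey-theoretic Lemma \ref{Hindman}. Because $\wap_0(G)\subseteq\wap(G)$, every ``$\wap_0$'' version of a condition in Theorem \ref{wap} trivially implies the corresponding ``$\wap$'' version, so $(ii)\Rightarrow(i)$ is automatic. The implication $(i)\Rightarrow(iii)$ is Corollary \ref{cor:wap2}. Thus the whole statement reduces to proving $(iii)\Rightarrow(ii)$, and more generally to upgrading every conclusion of Theorem \ref{wap} from $\wap(G)$ to $\wap_0(G)$.

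I would isolate the following central claim: \emph{if $S\subseteq G$ is translation-compact and $f\in\wap(G)$ is supported in $S$, then $\mu(|f|)=0$, so $f\in\wap_0(G)$.} Granting this, the theorem follows immediately. Under $(iii)$, every interpolating function produced in the proof of Theorem \ref{wap} through Lemma \ref{lem:tcc} --- namely $f_{T,\psi}$ and each approximant $1_{T_1,\psi}$ --- is supported in a subset of $VT$. Subsets of translation-compact sets are translation-compact (directly from the definition: $\bigcap b^{-1}S'\subseteq\bigcap b^{-1}S$), so the claim forces all these functions into $\wap_0(G)$, and each of the conditions (i)--(v) of Theorem \ref{wap} transfers verbatim. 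In particular $T$ becomes an approximable $\wap_0(G)$-interpolation set.

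To prove the central claim I would combine the mean ergodic characterization of $\mu$ with Lemma \ref{Hindman}. Since the orbit $\{|f|_s:s\in G\}$ is weakly relatively compact in $\ell_\infty(G)$ (this is just weak almost periodicity of $|f|$), the Ryll-Nardzewski fixed-point theorem together with Mazur's lemma places the constant function $\mu(|f|)\cdot 1_G$ in the norm closure of the convex hull of that orbit. Assume for a contradiction that $c:=\mu(|f|)>0$. Then I can choose $s_1,\dots,s_n\in G$ and nonnegative weights $\alpha_1,\dots,\alpha_n$ with $\sum_i\alpha_i=1$ such that $\varphi:=\sum_{i=1}^n \alpha_i\,|f|_{s_i}$ satisfies $\|\varphi-c\cdot 1_G\|_\infty<c/2$, whence $\varphi(x)>c/2$ for every $x\in G$. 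But $\varphi$ is supported in $\bigcup_{i=1}^n s_i^{-1}S$, a finite union of left translates of the (right-)translation-compact set $S$. By Lemma \ref{Hindman} this union is a proper subset of the non-compact group $G$, and any $x_0$ outside it gives $\varphi(x_0)=0$, contradicting $\varphi(x_0)>c/2$.

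The hard part is the appeal to the mean-ergodic machinery: one must know that $\mu(|f|)\cdot 1_G$ is approximable \emph{uniformly} by convex combinations of translates of $|f|$, not merely weakly. This is precisely what allows the pointwise covering statement of Lemma \ref{Hindman} to interact with the mean-theoretic definition of $\wap_0(G)$; it is what makes Ramsey theory the key input, exactly as promised in the introduction. The remaining steps --- the inheritance of translation-compactness by subsets and the correct support containment for the functions in Lemma \ref{lem:tcc} --- are routine bookkeeping.
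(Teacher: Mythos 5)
Your proposal is correct and follows essentially the same route as the paper: the authors likewise reduce everything to showing that a weakly almost periodic function supported in the translation-compact set $VT$ has $\mu(|f|)=0$, which they prove by taking a convex combination of translates within $\mu(|f|)/2$ of the constant $\mu(|f|)$ (via the Ryll-Nardzewski characterization of the invariant mean as the unique constant in the closed convex hull of the orbit), deducing that $G$ would be covered by finitely many translates of $VT$, and contradicting Lemma \ref{Hindman}. The only cosmetic difference is that you use left translates where the paper uses right translates; both work since $VT$ is translation-compact on both sides.
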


\begin{proof} We only need to prove the implication
(iii) $\Longrightarrow$ (ii). Let $V$ be a neighbourhood of $e$ such
that $V^2\subseteq U$ and $VT$ is translation-compact, and let $f$
be any function in $\uc(G)$ which is supported in  $VT.$
 We show that $f\in \wap_0(G)$, this will imply that $T$ is
 an approximable  $\wap_0(G)$-interpolation set exactly as   in Theorem \ref{wap}.
By Theorem \ref{wap}, $f\in \wap(G)$. Let $\mu\in\wap(G)^*$ be the
unique invariant mean on $\wap(G)$. By Ryll-Nardzewski's theorem,
$\mu(|f|)$ is the unique constant in the closed convex hull
$co({}_G|f|)$ of ${}_G|f|$, see \cite[Theorem 1.25 and Corollary
1.26]{Bu}. Here, ${}_sf$ is the right translate of $f$ by $s.$
 Suppose that $\mu(|f|)>0$, and let $\sum_{k=1}^nc_k({}_{x_k}|f|)\in co({}_G|f|)$ be
such that \[\|\sum_{k=1}^nc_k({}_{x_k}|f|)-\mu(|f|)\|=\sup_{s\in
G}|\sum_{k=1}^nc_k|f|(sx_k)-\mu(|f|)|<\frac{\mu(|f|)}2.\]
 It follows that $G=\bigcup_{k=1}^nVTx_k^{-1}$; otherwise,
 if some element $s$ of $G$ is not in $\cup_{k=1}^nVTx_k^{-1},$ then
$f(sx_k)=0$ for every $k=1,2,...,n$ since $f$ is supported in $VT$,
and so
 \[\frac{\mu(|f|)}2>
 \|\sum_{k=1}^nc_k({}_{x_k}|f|)-\mu(|f|)\|\ge |\sum_{k=1}^nc_k|f|(sx_k)-\mu(|f|)|=
 \mu(|f|),\]
 which is absurd.
Since $G$ is not compact and $VT$ is translation-compact, this is not possible by
 Lemma \ref{Hindman}. This  implies  that $m(|f|)$
must be zero, and so $f\in\wap_0(G),$ as required.
\end{proof}

\begin{corollary} \label{noapap}Let $G$ be a locally compact group.
Then no right uniformly discrete subset of $G$ can be an
approximable $\ap(G)$-interpolation set.
\end{corollary}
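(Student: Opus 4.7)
The plan is to exploit the decomposition $\wap(G) = \ap(G) \oplus \wap_0(G)$: I will show that the function extending the constant $1$ must simultaneously lie in $\ap(G)$ and $\wap_0(G)$, hence be zero, a contradiction. Throughout I tacitly assume $G$ is non-compact and $T$ is nonempty, since there is otherwise nothing to prove.

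First I would observe that $\ap(G) \subseteq \wap(G)$, so any approximable $\ap(G)$-interpolation set is automatically an approximable $\wap(G)$-interpolation set. Given that $T$ is right uniformly discrete, Corollary \ref{cor:wap2} then supplies a relatively compact neighbourhood $U$ of $e$ for which $T$ is right $U$-uniformly discrete and $UT$ is translation-compact. Applying the approximability definition to this $U$ with the distinguished subset $T_1 = T$ produces neighbourhoods $V_1 \subseteq V_2 \subseteq U$ of $e$ and a function $h \in \ap(G)$ with $h(V_1 T) = \{1\}$ and $h(G \setminus V_2 T) = \{0\}$, so in particular $h$ is supported in $V_2 T \subseteq UT$.

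The heart of the argument, and the step most likely to require care, is to upgrade this to $h \in \wap_0(G)$. Here I plan to adapt the invariant-mean argument from the proof of the implication (iii)$\Longrightarrow$(ii) in Theorem \ref{wap=wap_0}: let $\mu$ denote the invariant mean on $\wap(G)$ and suppose for contradiction that $\mu(|h|) > 0$. Then by Ryll-Nardzewski $\mu(|h|)$ lies in the closed convex hull of the right translates of $|h|$, and a uniform approximation by some $\sum_{k} c_k \, {}_{x_k}|h|$ forces $G$ to be covered by finitely many right translates of $V_2 T$, contradicting Lemma \ref{Hindman} because $V_2 T$, being a subset of the translation-compact set $UT$, is left translation-compact. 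The delicate point is that Theorem \ref{wap=wap_0} is formally stated for $E$-groups, but the $E$-group hypothesis is only invoked to obtain $h \in \wap(G)$ from translation-compactness, and here we already have $h \in \wap(G)$ for free from $h \in \ap(G)$; so the argument transfers without modification.

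Combining the two conclusions, $h \in \ap(G) \cap \wap_0(G) = \{0\}$ by the direct-sum decomposition recorded in Section 2. This contradicts $h(V_1 T) = \{1\}$ on the nonempty set $V_1 T$, and the proof is complete.
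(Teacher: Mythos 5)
Your proof is correct and follows essentially the same route as the paper: both pass through Corollary \ref{cor:wap2} to get translation-compactness, extract the almost periodic function $h$ supported in $V_2T$ from approximability, run the invariant-mean/Lemma \ref{Hindman} argument from Theorem \ref{wap=wap_0} (correctly noting, as the paper does parenthetically, that the $E$-group hypothesis is only needed there to get membership in $\wap(G)$, which is automatic for $h\in\ap(G)$), and conclude from $\ap(G)\cap\wap_0(G)=\{0\}$. The only cosmetic difference is that the paper re-derives this triviality of the intersection via the Haar measure on $G^{\ap}$, whereas you cite the direct-sum decomposition recorded in the preliminaries.
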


\begin{proof} Let $T$ be a subset of $G$
which is right uniformly discrete  with respect to some
neighbourhood $U$ of $e$ and suppose that $T$  is an approximable
$\ap(G)$-interpolation set.
 By Corollary \ref{cor:wap2}, there is $V\subseteq U$ such
that $VT$ is translation-compact, and so by Theorem \ref{wap=wap_0}, every $\uc(G)$-function supported on $VT$ is  in $\wap_0(G)$. (Observe that the $E$-property is not needed to prove neither Corollary \ref{cor:wap2} nor the implication (iii) $\Longrightarrow$ (ii) in Theorem \ref{wap=wap_0}.)
 Pick  a nonzero $f\in \ap(G)$
supported on $VT$.
 Then  $|f|\in \ap(G)\cap \wap_0(G)$, but this implies $f=0$
 for $\mu$ coincides with the Haar measure on $G^\ap$
 and
$|f|$ would extend to a  non-zero, continuous and   positive
function in $G^\ap$.
\end{proof}

\begin{corollary}\label{noapmet}
   Let $G$ be a metrizable locally compact group. Then no subset of $G$ can be an approximable $\ap(G)$-interpolation set.
\end{corollary}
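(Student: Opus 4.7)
The plan is to observe that an approximable $\ap(G)$-interpolation set is, by virtue of the inclusion $\ap(G)\subseteq\luc(G)$, automatically an approximable $\luc(G)$-interpolation set, upgrade this via metrizability to right uniform discreteness, and then invoke Corollary \ref{noapap} for the contradiction.

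First, I would check the inheritance of the property across the inclusion. If $T\subseteq G$ is an approximable $\ap(G)$-interpolation set, then every bounded $f\colon T\to\C$ extends to some $\tilde f\in\ap(G)\subseteq\luc(G)$, so $T$ is an $\luc(G)$-interpolation set. Moreover, the approximating functions $h\in\ap(G)$ whose existence is guaranteed by the definition of approximable $\ap(G)$-interpolation automatically lie in $\luc(G)$, so the same neighbourhoods $V_{1},V_{2}$ witness that $T$ is approximable as a $\luc(G)$-interpolation set.

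Second, I would apply Theorem \ref{lem:lucintconv}: since $G$ is metrizable, being an approximable $\luc(G)$-interpolation set is equivalent to being right uniformly discrete, so $T$ must be right uniformly discrete. Finally, Corollary \ref{noapap} says that no right uniformly discrete subset of a locally compact group can be an approximable $\ap(G)$-interpolation set. This contradicts our starting assumption and finishes the argument.

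There is essentially no obstacle to overcome at this stage: the genuine work has already been done in Theorem \ref{lem:lucintconv} (where metrizability is exploited to diagonalise against a countable neighbourhood basis and force uniform discreteness) and in Corollary \ref{noapap} (where the translation-compactness of $VT$, together with the fact that the invariant mean on $\ap(G)$ is Haar measure on $G^{\ap}$, forbids a nonzero almost periodic function supported on $VT$). The present statement is simply the composition of these two results along the chain $\ap(G)\subseteq\wap(G)\subseteq\luc(G)$.
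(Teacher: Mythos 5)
Your argument is correct and is essentially the paper's own proof: the authors likewise note that an approximable $\ap(G)$-interpolation set is an $\luc(G)$-interpolation set, apply Theorem \ref{lem:lucintconv} to conclude right uniform discreteness, and then invoke Corollary \ref{noapap}. The only cosmetic difference is that the paper first disposes of the compact case separately (compact metrizable groups admit no infinite $\ap(G)$-interpolation sets), a caveat worth keeping in mind since the machinery behind Corollary \ref{noapap} is really aimed at non-compact groups; the paper also records an alternative second proof that bypasses Theorem \ref{wap=wap_0} by working directly with translation-compactness.
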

    \begin{proof}
  We assume that $G$ is not compact, as compact metrizable groups
  cannot contain infinite $\ap(G)$-interpolation sets.

      Suppose $T$ is an approximable $\ap(G)$-interpolation set.
By Theorem \ref{lem:lucintconv}, $T$ is right $U$-uniformly discrete
for some neighbourhood $U$ of $e$. This contradicts Corollary
\ref{noapap}.

\noindent
{\it Second proof.}
We can also argue directly without using Theorem \ref{wap=wap_0}.
By Corollary \ref{cor:wap2}, we may suppose that $UT$ is also right
translation-compact. Let $V_1$ and $V_2$ be two neighbourhoods of
the identity with $\overline{V_1}\subseteq V_2\subseteq U$  and $h\in
\ap(G)$ be such that $h(V_1T)=\{1\}$ and $h(G\setminus V_2T)=\{0\}$.
Let $(x_n)_n$ be a sequence in $G$ that goes to infinity. Since the
set of left translates $\{h_{x_n}\colon n<\omega\}$ is relatively
compact in $\ell_\infty(G)$, we can assume by taking the tail of a
subsequence, if necessary, that
\[\|h_{x_n}-h_{x_m}\|_\infty<1, \mbox{ for all $n,m$.}\]
As a consequence $|h(x_ns)-h(x_ms)|<1$ for all $n,m$ and $g\in G$.
It follows that
\begin{equation}\label{eq:tc}x_n^{-1}V_1T\subseteq x_m^{-1}V_2T, \mbox{ for all $n,m$.}\end{equation}

Since $UT$ is right translation-compact, there must be a finite
family $x_{n_1},\ldots,x_{n_k}$,  such that
$x_{n_1}^{-1}UT\cap\ldots \cap x_{n_k}^{-1}UT$ is  relatively
compact. But an application of \eqref{eq:tc} shows that
\[x_{n_1}^{-1}V_1T\subseteq x_{n_1}^{-1}UT\cap\ldots \cap x_{n_k}^{-1}UT.\]
Since $x_{n_1}^{-1}V_1T$ is not relatively compact,
$x_{n_1}^{-1}UT\cap\ldots \cap x_{n_k}^{-1}UT$ cannot be relatively
compact either. This contradiction proves the corollary.
    \end{proof}

\section{On the union of approximable $\A(G)$-interpolation sets}

As already noted in the introduction, in a discrete Abelian group
any finite union of Sidon sets is a Sidon set \cite{drury70}. A finite union of $I_0$-sets is however not always
an $I_0$-set, for example  the union of the two $I_0$-sets  $\{6^n:n\in\mathbb N\}$ and $\{6^n+n:n\in\mathbb N\}$ is not an $I_0$-set (see for instance \cite[p. 132]{kaha70}).

The property is not true for right $t$-sets either, simply take $T$ as any right
$t$-set, $s\ne e$ in $G$ and consider $T\cup sT$. As we show next
finite unions of right uniformly discrete sets are not in general uniformly discrete either. As a matter of fact, this will be   the only
obstacle towards union theorems for approximable interpolation sets,
see Proposition \ref{unionunif} below.

Any finite union of right translation-finite sets stays right
translation-finite, this was obtained by Ruppert as a consequence of
translation-finite subsets being approximable
$\wap(G)$-interpolation sets for discrete $G$ (see Theorem \ref{wap}
and Corollary \ref{cor:wap}). Independently, this result was also
proved directly from the definition using combinatorial arguments in
\cite[Lemma 5.1]{FPr}.

This is generalized in Corollary \ref{t-cunion} below, where $G$ is a locally compact $E$-group.
We shall in fact deduce from
Theorem \ref{wap} that finite unions of translation-compact sets of
the form $VT$, where $V$ a relatively compact neighbourhood of the
identity and $T$ is a $V^2$-right uniformly discrete, stay
translation-compact under the  condition that the union of the sets $T$ is right (left) uniformly discrete.

\begin{proposition} \label{unionunif} Let $G$ be a locally compact group. Finite unions of right (left) uniformly discrete sets are right (left) uniformly discrete if and only if $G$ is discrete.
\end{proposition}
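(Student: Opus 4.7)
The ``if'' direction is immediate: when $G$ is discrete, $\{e\}$ is a neighbourhood of $e$ witnessing right (and left) uniform discreteness of \emph{every} subset of $G$, so the class is closed under finite unions. For the converse I will argue the contrapositive. If $G$ is compact and non-discrete the conclusion holds vacuously, because any right $V$-uniformly discrete $T$ yields pairwise disjoint open translates $\{Vs\colon s\in T\}$ of positive Haar measure inside a set of finite measure, forcing $T$ to be finite; hence finite unions of such sets are finite, and uniformly discrete. So the interesting case is $G$ locally compact, non-compact and non-discrete, which I now assume.

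Fix a symmetric compact neighbourhood $V$ of $e$ and, using non-discreteness, pick a sequence $(x_n)\subseteq V\setminus\{e\}$ of distinct elements with $x_n\to e$ (a net indexed by a neighbourhood basis would replace this sequence if $G$ were not first countable). I will construct $(a_n)$ in $G$ inductively by choosing, at each stage,
\[ a_{k+1}\in G\setminus\bigcup_{j\le k}\bigl(V^2 a_j\cup x_{k+1}^{-1}V^2 x_j a_j\bigr). \]
This is possible because the forbidden region is a finite union of compact translates of $V^2$, while $G$ is non-compact. The first batch of avoidances encodes $V a_{k+1}\cap V a_j=\emptyset$, so that $T_1:=\{a_n\colon n\in\N\}$ is right $V$-uniformly discrete; the second batch encodes $V(x_{k+1}a_{k+1})\cap V(x_j a_j)=\emptyset$, so that $T_2:=\{x_n a_n\colon n\in\N\}$ is right $V$-uniformly discrete as well.

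However $(x_n a_n)a_n^{-1}=x_n\to e$ with $x_n a_n\ne a_n$, so $T_1\cup T_2$ contains infinitely many pairs of distinct points whose ratios converge to the identity; for any candidate neighbourhood $W$ of $e$, eventually $x_n\in W^{-1}W$ and hence $W a_n\cap W(x_n a_n)\ne\emptyset$. Therefore $T_1\cup T_2$ fails to be right uniformly discrete. The left version is handled symmetrically with $\{a_n x_n\}$ in place of $\{x_n a_n\}$. The only delicate point is coordinating both uniform-discreteness requirements inside a single induction, but this is absorbed by the non-compactness budget: the forbidden region at every step is a finite union of compact sets and cannot exhaust $G$.
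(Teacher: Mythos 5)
Your main construction (for $G$ non\nobreakdash-compact and non\nobreakdash-discrete) is correct and is in essence the paper's argument: both proofs produce a uniformly discrete set together with its perturbation by a null sequence, and observe that the union cannot be right $W$-uniformly discrete for any $W$ because $(x_na_n)a_n^{-1}=x_n$ eventually falls in $W^{-1}W$. The paper obtains the two sets more economically: it fixes $V^2\subseteq U$, takes an \emph{arbitrary} infinite right $U$-uniformly discrete set $T_1=\{t_n\}$ and a faithfully indexed sequence $(s_n)\subseteq V$ converging to $e$; then $Vs_nt_n\subseteq V^2t_n\subseteq Ut_n$, so $T_2=\{s_nt_n\}$ is $V$-uniformly discrete with no further work, and your simultaneous greedy induction is not needed (though it is valid). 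One caveat: your parenthetical escape route for non-first-countable $G$ (``a net indexed by a neighbourhood basis would replace this sequence'') does not mesh with the rest of your argument, since the induction runs over $\N$ and needs the forbidden region at each stage to be a \emph{finite} union of compact sets. What is actually required is a genuine faithfully indexed sequence in $V\setminus\{e\}$ accumulating at $e$; the paper gets exactly this from \cite[Lemma 5.2]{FG1}, valid in every non-discrete locally compact group, and you should invoke something of that kind rather than a net.

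The more serious point is your treatment of the compact case. You assert that for compact non-discrete $G$ ``the conclusion holds vacuously'', but what your own Haar-measure argument shows is that every right uniformly discrete subset of a compact group is finite, and finite unions of finite sets are finite, hence right uniformly discrete. That is not a vacuous verification of the contrapositive you set out to prove ($G$ non-discrete $\Rightarrow$ some finite union fails to be uniformly discrete); it is a \emph{counterexample} to the ``only if'' direction of the proposition as stated: for $G=\mathbb{T}$ the class of right uniformly discrete sets is closed under finite unions, yet $G$ is not discrete. The paper's own proof has the same blind spot --- it silently invokes an infinite right $U$-uniformly discrete set, which exists only when $G$ is non-compact --- so the statement should be read with $G$ non-compact. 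You were right to isolate the compact case, but you should report it as a missing hypothesis rather than absorb it with the word ``vacuously''.
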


\begin{proof}  Suppose that $G$ is not discrete, and
fix relatively compact neighbourhoods $U$ and $V$ of $e$ with
$V^2\subseteq U$. Then by \cite[ Lemma 5.2]{FG1},  $V$ contains a
faithfully indexed sequence $S=\{s_n \colon n<\omega\}$ converging
to the identity. Let $T_1=\{t_n\colon n<\omega\}$ be a $U$-right
uniformly discrete set. Then $T_1$ and $T_2=\{s_nt_n\colon
n<\omega\}$ are both $V$-right uniformly discrete sets but $T_1\cup
T_2$ is not.
\end{proof}
%
%
%

\begin{proposition}\label{prop:unionapp}
  Let $G$ be a topological group and let
  $\A(G)$ be a subalgebra of $\luc(G)$. If  $T_1$ and $T_2$ are approximable
  $\A(G)$-interpolation sets and $T_1\cup T_2$ is right
   uniformly discrete,
  then $T_1\cup T_2$ is an approximable $\A(G)$-interpolation set.
\end{proposition}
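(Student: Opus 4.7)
The plan is to verify the two clauses of Definition \ref{def:main} separately, using the interpolation and approximability properties of $T_1$ and $T_2$ together with the right uniform discreteness of $T_1\cup T_2$.  Fix a symmetric neighbourhood $W$ of $e$ with $Ws\cap Ws'=\emptyset$ for distinct $s,s'\in T_1\cup T_2$; equivalently, $W^{-1}Wt\cap(T_1\cup T_2)=\{t\}$ for each $t\in T_1\cup T_2$.  Throughout, $\A(G)$ is a $C^{*}$-subalgebra of $\ell_\infty(G)$, hence closed under sums, differences and products.

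\emph{Interpolation.}  For a bounded $f\colon T_1\cup T_2\to\C$, first extend $f\rest{T_i}$ to some $\tilde f_i\in\A(G)$ using the interpolation property of $T_i$.  Apply the approximability of $T_1$ with $U=W^{-1}W$ to obtain a neighbourhood $V_2\subseteq W^{-1}W$, and, for the subset $T_1'=T_1$, a function $h\in\A(G)$ with $h\rest{T_1}=1$ and $h(G\setminus V_2T_1)=0$.  By choice of $W$, $V_2T_1\subseteq W^{-1}WT_1$ is disjoint from $T_2\setminus T_1$, so $h$ also vanishes on $T_2\setminus T_1$.  The function
\[
g:=h\tilde f_1+\tilde f_2-h\tilde f_2\in\A(G)
\]
then satisfies $g\rest{T_1}=\tilde f_1\rest{T_1}=f\rest{T_1}$ and $g\rest{T_2\setminus T_1}=\tilde f_2\rest{T_2\setminus T_1}=f\rest{T_2\setminus T_1}$, so it extends $f$.

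\emph{Approximability.}  Let $U$ be an arbitrary neighbourhood of $e$.  Approximability of $T_i$ supplies open neighbourhoods $V_1^{(i)},V_2^{(i)}$ with $\overline{V_1^{(i)}}\subseteq V_2^{(i)}\subseteq U$.  Set
\[
V_2:=V_2^{(1)}\cup V_2^{(2)},
\]
and choose an open $V_1$ with $\overline{V_1}\subseteq V_1^{(1)}\cap V_1^{(2)}$ (possible by regularity of $G$).  Both are contained in $U$ and $\overline{V_1}\subseteq\overline{V_1^{(1)}}\subseteq V_2^{(1)}\subseteq V_2$.  For any $T_1'\subseteq T_1\cup T_2$ write $S_i:=T_1'\cap T_i$, so that $T_1'=S_1\cup S_2$; approximability of $T_i$ supplies $h_i\in\A(G)$ with $h_i(V_1^{(i)}S_i)=\{1\}$ and $h_i(G\setminus V_2^{(i)}S_i)=\{0\}$.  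Define the ``fuzzy union''
\[
h:=h_1+h_2-h_1h_2\in\A(G).
\]
On $V_1T_1'=V_1S_1\cup V_1S_2\subseteq V_1^{(1)}S_1\cup V_1^{(2)}S_2$, the relevant $h_i$ equals $1$, whence $h=1$.  Off $V_2T_1'$ one has both $G\setminus V_2T_1'\subseteq G\setminus V_2^{(i)}S_i$ (since $V_2^{(i)}\subseteq V_2$), so both $h_1$ and $h_2$ vanish there, giving $h=0$.

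The only delicate point is the bookkeeping of neighbourhoods: the union $V_2=V_2^{(1)}\cup V_2^{(2)}$ is needed to swallow both $V_2^{(i)}S_i$ inside $V_2T_1'$, while the intersection $V_1^{(1)}\cap V_1^{(2)}$ is needed so that a single $V_1$ feeds both $h_i$'s at once; right uniform discreteness of the union intervenes only in the interpolation step, to guarantee $h\rest{T_2\setminus T_1}=0$.  With these arrangements the verification is routine.
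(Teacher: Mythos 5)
Your proof is correct. The overall architecture (extend on each piece, glue with cutoff functions from the approximation property) is the same as the paper's, but you handle the two clauses differently in ways worth noting. For the interpolation clause, the paper writes the extension as $f_1h_1+f_2h_2$ with \emph{two} bumps, $h_1$ supported near $T_1$ and $h_2$ supported near $T_2\setminus T_1$, whereas you use a single bump $h$ near $T_1$ and the convex-combination form $h\tilde f_1+(1-h)\tilde f_2$; both hinge on exactly the same use of right $W$-uniform discreteness to keep $W^{-1}WT_1$ (resp.\ the supports) away from $T_2\setminus T_1$, so this is only a cosmetic variation. The more substantive difference is in the approximability clause: the paper takes the plain sum $h_1+h_2$, which forces it to split $S$ into the \emph{disjoint} pieces $S\cap T_1$ and $S\cap(T_2\setminus T_1)$ and to shrink all neighbourhoods inside $W\cap U$ so that the two supports cannot meet; your ``fuzzy union'' $h_1+h_2-h_1h_2$ equals $1$ wherever either factor does, so you can work with the possibly overlapping pieces $T_1'\cap T_1$ and $T_1'\cap T_2$ and you need no uniform discreteness at all in this step. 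This isolates cleanly where the hypothesis is actually used (only for interpolation) and shows, as a small bonus, that clause (ii) of Definition \ref{def:main} by itself is stable under finite unions. One pedantic caveat: your parenthetical remark that $\A(G)$ is a $C^*$-subalgebra is not part of the hypothesis (the statement only assumes a subalgebra), but closure under sums and products is all you use, so nothing is affected.
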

\begin{proof}
Fix a neighbourhood $W$ of the identity such that $T_1\cup T_2$
  is right $W$-uniformly discrete.
Let  $U$ be an  arbitrary neighbourhood of the identity.

   Since $T_1$ and $T_2$ are approximable
  $\A(G)$-interpolation sets there are two pair of neighbourhoods of
  the identity $V_{11}$, $V_{12}$, and $V_{21}$, $V_{22}$ with
  $\overline{V_{11}}\subseteq V_{12}\subseteq W\cap U$ and   $\overline{V_{21}}
  \subseteq  V_{22}\subseteq W\cap U$ with the properties stated in Definition \ref{def:main} of
approximable interpolation set .
For each  $S\subseteq T_1\cup T_2$, we obtain from the definition
 two functions $h_1, h_2\in \A(G)$ such that
\begin{align*} h_1(V_{11}(S\cap T_1))=&\{1\},\qquad
h_2(V_{21}(S\cap(T_2\setminus T_1)))=\{1\},\\ h_1\bigl(G\setminus
V_{12}(S\cap T_1)\,\bigr)=&\{0\}\quad\text{ and}\quad
h_2\bigl(G\setminus V_{22}(S\cap(T_2\setminus
T_1))\,\bigl)=\{0\}.\end{align*} If $S\subseteq T_1$ or $S\cap
T_1=\emptyset$ (in which case $S\subseteq T_2$), we just consider
one of the functions, $h_2$ or $h_1$ respectively,.

    We first  prove that $T_1\cup T_2$ is an $\A(G)$-interpolation set.
Let $f\colon T_1\cup T_2\to \C$ be a bounded
    function and assume that the above functions $h_1$ and $h_2$
    have been constructed for $S=T_1\cup T_2$, so that $S\cap
    T_1=T_1$ and $S\cap(T_2\setminus T_1)=T_2\setminus T_1$.
We may consider two functions
  $f_1,f_2\in \A(G)$ such that $f_{1}\restr_{T_1}=f\restr_{T_1}$ and
  $f_2\restr_{T_2}=f\restr_{T_2}$. Since  $T_1\cup T_2$ is
right  $U$-uniformly discrete, \[T_1\subseteq  G\setminus
V_{22}(T_2\setminus T_1)\quad\text{ and}\quad T_2\setminus
T_1\subseteq G\setminus V_{12}T_1,\]
 whence it follows that $f$ coincides with
$f_1h_1+f_2h_2$ on $T_1\cup T_2$. Since $f_1h_1+f_2h_2\in \A(G),$ we
have shown that $T_1\cup T_2$ is an $\A(G)$-interpolation set.

To see that $T_1\cup T_2$ is an approximable $\A(G)$-interpolation set,
let $S$ be again an arbitrary subset of $T_1\cup T_2.$ Then we only
have to observe that  \[(h_1+h_2)((V_{11}\cap
V_{21})S)=\{1\}\quad\text{ and}\quad (h_1+h_2)(G\setminus
(V_{12}\cup V_{22})S)=\{0\}.\] Since $\overline{V_{11}\cap
V_{21}}\subseteq V_{12}\cup V_{22}\subseteq U$,
the proof is done.
  \end{proof}

\begin{corollary}
  Let $G$ be a metrizable topological group, let
  $\A(G)$ be a subalgebra of $\luc(G)$ and let  $T_1$ and $T_2$ be approximable
  $\A(G)$-interpolation sets. Then,
    $T_1\cup T_2$ is an approximable $\A(G)$-interpolation set
  if and only if $T_1\cup T_2$ is right
   uniformly discrete.
\end{corollary}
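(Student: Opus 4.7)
The plan is to get this corollary essentially for free, as both implications reduce to results already proved in the paper; metrizability only enters on the necessity side.

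For sufficiency, I would simply invoke the preceding Proposition \ref{prop:unionapp}: if $T_1\cup T_2$ is right uniformly discrete, then since $T_1$ and $T_2$ are approximable $\A(G)$-interpolation sets (with $\A(G)\sub\luc(G)$), that proposition yields directly that $T_1\cup T_2$ is an approximable $\A(G)$-interpolation set. Nothing further is needed here; in particular, no use of metrizability is required for this direction.

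For necessity, suppose $T_1\cup T_2$ is an approximable $\A(G)$-interpolation set. Then in particular it is an $\A(G)$-interpolation set, and since $\A(G)\subseteq \luc(G)$, every bounded function on $T_1\cup T_2$ extends to a function in $\luc(G)$; that is, $T_1\cup T_2$ is an $\luc(G)$-interpolation set. Now metrizability enters through Theorem \ref{lem:lucintconv}: for metrizable $G$, $\luc(G)$-interpolation sets coincide with the right uniformly discrete sets, so $T_1\cup T_2$ must be right uniformly discrete.

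Since each direction is a one-step appeal to a named earlier result, there is no real obstacle. The only delicate point worth flagging is the assumption $\A(G)\subseteq \luc(G)$, which is what permits the reduction to the $\luc(G)$ case in the necessity argument; without it one could not appeal to Theorem \ref{lem:lucintconv}.
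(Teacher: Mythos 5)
Your proposal is correct and follows exactly the same route as the paper: sufficiency by direct appeal to Proposition \ref{prop:unionapp}, and necessity by observing that an $\A(G)$-interpolation set with $\A(G)\subseteq\luc(G)$ is an $\luc(G)$-interpolation set and then invoking Theorem \ref{lem:lucintconv}, which is where metrizability is used. Nothing is missing.
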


\begin{proof}
  Sufficiency is proved in Proposition \ref{prop:unionapp} above.
  For the necessity one only has to note that $\A(G)$-interpolation
  sets are $\luc(G)$-interpolations sets, and hence they
  must be right uniformly discrete by
Theorem \ref{lem:lucintconv}.
\end{proof}


The following is a generalization of the result on finite unions of translation-finite sets proved for discrete groups
in \cite{rup} and \cite{FPr}.
Note that by
Example \ref{ex:tc} this cannot be proved with Corollary
\ref{cor:wap} alone.

\begin{corollary}{\label{t-cunion}} Let $G$ be a locally compact $E$-group and
$T_1$ and $T_2$ be subsets of $G$  such that $T_1\cup T_2$ is right uniformly discrete
with respect to some neighbourhood $U$ of $e.$ If $UT_1$ and $UT_2$ are translation-compact, then
$VT_1\cup VT_2$ is translation-compact for some neighbourhood $V$ of $e$.
\end{corollary}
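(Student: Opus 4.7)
The plan is to shuttle the combinatorial assertion through the $\wap(G)$-interpolation characterization, avoiding a direct combinatorial union argument. Concretely, translation-compactness of $UT_i$ will be promoted to each $T_i$ being an approximable $\wap(G)$-interpolation set (via Lemma \ref{lem:lucint}), then the union theorem of Proposition \ref{prop:unionapp} will be applied to $\A(G)=\wap(G)$, and finally Corollary \ref{cor:wap2} will convert the approximable $\wap(G)$-interpolation property of $T_1\cup T_2$ back into translation-compactness of $V(T_1\cup T_2)=VT_1\cup VT_2$ for a suitable $V$.

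First I would fix a symmetric relatively compact neighbourhood $W$ of $e$ with $W^2\subseteq U$. Each $T_i\subseteq T_1\cup T_2$ is then right $U$-uniformly discrete (in particular right $W^2$-uniformly discrete), and since the translation-compact property obviously passes to subsets, $WT_i\subseteq UT_i$ is translation-compact. Invoking Lemma \ref{lem:lucint}(ii) (with $W$ in the role of $V$ there), each $T_i$ becomes an approximable $\wap(G)$-interpolation set. Implicit here is that each $T_i$ is an $E$-set, which is the standing assumption underlying Theorem \ref{wap} and Lemma \ref{lem:lucint}(ii) throughout this section.

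Next I apply Proposition \ref{prop:unionapp} to $\A(G)=\wap(G)\subseteq \luc(G)$. Both hypotheses are available: $T_1$ and $T_2$ are approximable $\wap(G)$-interpolation sets by the previous step, and $T_1\cup T_2$ is right uniformly discrete by assumption. The proposition delivers that $T_1\cup T_2$ itself is an approximable $\wap(G)$-interpolation set.

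Finally, since $T_1\cup T_2$ is right $U$-uniformly discrete and an approximable $\wap(G)$-interpolation set, Corollary \ref{cor:wap2} produces a relatively compact neighbourhood $V$ of $e$ with $V\subseteq U$ for which $V(T_1\cup T_2)=VT_1\cup VT_2$ is translation-compact. This is precisely the conclusion sought. The main delicate point in the plan is ensuring the $E$-set hypothesis for Lemma \ref{lem:lucint}(ii) on each $T_i$; beyond that, the argument is a clean assembly of earlier results, with no further combinatorial work required, and nicely explains why the paper markets this result as a deduction from Theorem \ref{wap} rather than attempting a direct combinatorial union proof.
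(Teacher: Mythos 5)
Your proof is correct and is essentially the paper's own argument: the paper cites Theorem \ref{wap} for both directions, and its final ``moreover'' clause is precisely the combination of Lemma \ref{lem:lucint}(ii) and Corollary \ref{cor:wap2} that you invoke directly, together with Proposition \ref{prop:unionapp} for the union step. Your remark about the implicit $E$-set hypothesis on the $T_i$ is apt; the paper's statement and proof carry the same implicit assumption.
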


\begin{proof}  By Theorem \ref{wap}, $T_1$ and $T_2$ are approximable $\wap$-interpolation sets.
By Proposition \ref{prop:unionapp}, $T_1\cup  T_2$ is an approximable $\wap$-interpolation set. By Theorem \ref{wap},
$VT_1\cup VT_2$ is translation-compact for some neighbourhood $V$ of $e$.
 \end{proof}

\section{Examples and Remarks}
If $G$ is a topological group and $G_d$ denotes the same group
equipped with the discrete topology, then $\wap(G)=\wap(G_d)\cap
\CB(G)$, $\B(G)=\B(G_d)\cap \CB(G)$ and $\ap(G)=\ap(G_d) \cap
\CB(G)$.
It
could be conjectured that whenever $T$ is both an (approximable)
$\luc(G)$-interpolation set and an approximable $\wap(G_d)$-interpolation, then
$T$ must be an approximable $\wap(G)$-interpolation set. 
 As we show next,  it can be the case that it is not
possible to get the necessary functions   that are \emph{both} continuous and
weakly almost periodic.
\begin{example}\label{ex:tc} There exists a subset $T\subseteq  \R$ that is both uniformly discrete and translation-finite but
such that $UT$ is not translation-compact for any neighbourhood $U$
of the identity. This provides an example of a subset $T\subseteq  \R$
that is an approximable  $\CB(G)$-interpolation set, an approximable
$\wap(G_d)$-interpolation set but yet it is not a
$\wap(G)=\wap(G_d)\cap \CB(G)$-interpolation set.
\end{example}

\begin{proof}
Let $(\alpha_n)_n$ be a decreasing sequence  of real numbers with $\alpha_0=1$
such that the set $\left\{\alpha_n \colon n\geq 0\right\}$ is
linearly independent over the rationals and  $\lim_{n\to \infty}
\alpha_n=0$. We define $T$ as $T=\left\{n+\alpha_n \colon n\in
\N\right\}$.   It is obvious that $T$ is a linearly independent and
uniformly discrete subset of $\R$.
As every linearly independent subset, $T$ is a $t$-set (and so translation-finite). It
is actually  an $\ap(\R_d)$-interpolation set. Indeed, if $T_1$ and $T_2$ are disjoint subsets of $T$ then $\cl_{\R_d^\ap}T_1\cap \cl_{G^\ap}T_2=\emptyset$ follows simply by choosing   a character $\chi\colon G\to \mathbb{T}$ sending $T_1$ to 1 and $T_2$ to $-1$. Then, by for example \cite[Corollary 3.6.2]{enge77}, we see that  $T$ is an $\ap(\R_d)$-interpolation set.

Let now $U$ be a neighbourhood of 0 in $\R$. If
$\alpha_n-\alpha_{n+k} \in U$, then
\[ n+\alpha_n\in -k+T+U.\]
Now, if $F\subseteq \N$ is finite, there will be an infinite subset
$S$ of $\N$ such that $n\in S$ implies that
$\alpha_n-\alpha_{n+k}\in U$ for all $k\in F$. Then
\[ \left\{ n+\alpha_n \colon n\in S \right\} \subseteq
\bigcap\left\{-k+U+T\colon k\in F\right\}.\] Since $ \left\{
n+\alpha_n \colon n\in S \right\}$ is an infinite  uniformly
discrete set, we have that
\[\bigcap\left\{-k+U+T\colon k\in F\right\}\] is not relatively compact.
Therefore,
 $U+T$ is not translation-compact.
%
%
\end{proof}

  Next, we show that an $\luc(G)$-interpolation set
    needs not be uniformly discrete, if $G$  is not metrizable.
    \begin{example}\label{ex:lucint}
    The group $K=\Z^{\ap}$,  the Bohr compactification of the
    discrete group of the integers, contains an
$\luc(G)$-interpolation that is not  uniformly discrete.
 \end{example}
 \begin{proof}
     Let $T\subseteq \Z$ be an infinite
    $\ap(\Z)$-interpolation set (i.e., an $I_0$-set, for instance
    $T=\{2^n\colon n\in \N\}$). Then for
    $\A(K)=\ap(K)=\wap(K)=\luc(K)=\CB(K)$, $T$ is an
    $\A(K)$-interpolation subset of $K$ but, $K$ being compact,  $T$ is not a right (left)
    uniformly discrete subset of  $K$.
 \end{proof}
    Next we use a argument from
     \cite{rams80} and show that under the Continuum Hypothesis (CH) the
      above situation is  universal among  nonmetrizable Abelian groups.
This indicates that,  in Theorem \ref{lem:lucintconv}, metrizability
is a
      hardly avoidable hypothesis.
    \begin{theorem}\label{CH} Let $G$ be locally compact abelian group, and let  $\A(G)=\ap(G)$, $\wap(G)$ or $\luc(G)$.
    Under CH, every $\A(G)$-interpolation set is uniformly discrete if and
    only if $G$ is metrizable.
    \end{theorem}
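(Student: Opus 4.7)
The \emph{if} direction is immediate from Theorem~\ref{lem:lucintconv}: since $\ap(G),\wap(G)\subseteq\luc(G)$, any $\A(G)$-interpolation set is also an $\luc(G)$-interpolation set and hence right uniformly discrete when $G$ is metrizable; for abelian $G$ this is uniform discreteness.

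For the converse, assume (CH) and $G$ non-metrizable. Because $\ap(G)$ is the smallest of the three algebras, a non-uniformly-discrete $\ap(G)$-interpolation set is also an interpolation set for the other two, and such a set is precisely a $T\subseteq G$ that is $C^*$-embedded in the Bohr compactification $G^{\ap}$ yet not uniformly discrete in $G$. The plan has a structural reduction followed by a transfinite construction.

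A structural reduction brings the problem into a compact setting. By Pontryagin's structure theorem, $G\cong\R^n\times H$ with $H$ containing a compact open subgroup $K$; since for topological groups metrizability is equivalent (by Birkhoff--Kakutani) to first countability at the identity, non-metrizability of $G$ forces the compact open subgroup $K$ itself to be non-metrizable. I would then work inside this $K$: as a closed subgroup, $K^{\ap}$ embeds in $G^{\ap}$, so by Tietze's theorem $C^*$-embedding in $K^{\ap}$ promotes to $C^*$-embedding in $G^{\ap}$; moreover, uniform discreteness in $K$ coincides with uniform discreteness in $G$ for subsets of $K$, and since $K$ is compact, every infinite subset of $K$ automatically fails to be uniformly discrete. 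The task thus reduces to producing a countable $I_0$-subset of an arbitrary compact non-metrizable abelian group $K$.

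The existence of such an $I_0$-set is the transfinite construction of~\cite{rams80}. Under (CH) a countable set $T=\{t_n:n<\omega\}$ has only $\aleph_1$ disjoint pairs of subsets; I would enumerate these as $(S^0_\alpha,S^1_\alpha)_{\alpha<\omega_1}$ and, by transfinite recursion of length $\omega_1$, commit for each $\alpha$ a character $\chi_\alpha\in\widehat K$ separating the two sides, while choosing the $t_n$'s inside a shrinking family of identity neighbourhoods so that they cluster at~$e$. The main obstacle is that at each stage the new choice---of a $t_n$ or of a separating character---must avoid fewer than $\aleph_1$ closed, nowhere-dense constraint sets coming from previous commitments; the Baire category theorem inside the compact Baire spaces $K$ and $\widehat K$ ensures that the choice is always possible. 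Once the recursion is complete, the resulting $T$ clusters at $e$ (so fails uniform discreteness), and every disjoint pair of subsets of $T$ has its committed separating character, from which Urysohn extends any bounded function on $T$ to a continuous (hence almost periodic) function on $K$; Tietze lifts this to an element of $\ap(G)$, so $T$ is the sought $\ap(G)$-interpolation set.
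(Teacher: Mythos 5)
Your ``if'' direction and your structural reduction agree with the paper: one passes to a compact non-metrizable subgroup $K$ (the paper invokes Theorem 8.7 of Hewitt--Ross rather than the decomposition $G\cong\R^n\times H$, but the outcome is the same), notes that an infinite subset of a compact group can never be uniformly discrete, and extends continuous functions from the closed subgroup $K\subseteq G^{\ap}$ by Tietze. The genuine gap is in the one step that carries all the content: producing an infinite $\ap(K)$-interpolation set in an arbitrary compact non-metrizable abelian $K$. The transfinite recursion you sketch cannot be run. There are only $\omega$ points $t_n$ to choose but $\aleph_1$ pairs $(S^0_\alpha,S^1_\alpha)$ to separate; a character separating the pair indexed by $\alpha$ can only be sought once the relevant $t_n$'s are fixed, while each $t_n$ would have to anticipate all $\aleph_1$ later separation demands, so the two kinds of choices cannot be interleaved. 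Once $T$ is fixed, whether a given pair of disjoint subsets of $T$ can be separated by a continuous function on $K$ is a property of $T$ with nothing left to ``commit'', and it genuinely fails for badly chosen $T$: any infinite $T$ inside a closed metrizable subgroup --- for instance a sequence actually converging to $e$, which is exactly what ``choosing the $t_n$ in a shrinking family of identity neighbourhoods'' risks producing --- has metrizable closure, which cannot be $\beta\N$. Finally, the Baire-category appeal in $\widehat{K}$ is vacuous: since $K$ is compact, $\widehat{K}$ is discrete (in particular not compact when infinite) and its only nowhere dense subset is empty, so no constraint is being avoided there.

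What the paper does instead (adapting Ramsey's argument, which is not the recursion you describe) is a uniform duality construction. CH gives $w(K)\geq\aleph_1=\cc$, hence $\card{\widehat{K}}\geq\cc$, so $\widehat{K}$ contains a subgroup $D\cong\bigoplus_{\cc}H$ with $H$ either $\Z$ or finite; $D$ admits a quotient carrying a compact group topology, whose discrete dual is called $L$. Dualizing the diagram $\widehat{L}_d\leftarrow D\rightarrow\widehat{K}$ produces a topological embedding $L^{\ap}\hookrightarrow\widehat{D}$ and a quotient homomorphism $K\twoheadrightarrow\widehat{D}$. An infinite $I_0$-set $A$ in the countable discrete group $L$ (Hartman--Ryll-Nardzewski) is a $C(L^{\ap})$-interpolation set, hence a $C(\widehat{D})$-interpolation set by Tietze, and any lift of it through $K\to\widehat{D}$ is the required infinite $\ap(K)$-interpolation set. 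Some such uniform construction (duality as above, or an explicit $C^*$-embedded sequence in a power of a fixed compact group pulled back through a quotient of $K$) must replace your recursion; as written, the key existence claim is asserted but not proved.
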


    \begin{proof} Sufficiency is proved in Theorem \ref{lem:lucintconv} without assuming CH.
    Conversely, suppose that $G$ is not metrizable and let $K$ be a
    compact subgroup of $G$ such that $G/K$ is metrizable
    (this is always available, see Theorem 8.7 of \cite{hewiross1}). Then $K$ is
    not metrizable and therefore its topological weight must be at
    least  $\cc$,
     (it is here where
    we use CH), therefore, by \cite[Theorem 24.15]{hewiross1},  $\widehat{K}$ is an Abelian group of
    cardinality at least $\cc$.

    By taking the subgroup generated by a maximal independent subset of   $\widehat{K}$ containing only elements of the same order \cite[Section 16]{fuchsx}, we see that $\widehat{K}$ contains a subgroup $D$ isomorphic
    to a direct sum $\bigoplus_{\cc} H$ where $H$ is either $\Z$ or a finite group. In both cases $D$ has a quotient
that admits a  compact group topology (if $H=\Z$ we use
\cite[Corollary 4.13]{fuchsx}, and if $H$ is finite then $D$ itself works for
$D\cong H^\omega$). Denote
    the discrete dual of this compact group by $L$. We thus have an injective group homomorphism $j$ and a  surjective group homomorphism $\pi$ as follows  \[  \xymatrix{ \widehat{L}_d & \ar[l]_{\pi}D\ar[r]^j &\widehat{K}}.\]
    Dualizing  the above diagram, we  obtain
    \[  \xymatrix{\widehat{ \widehat{L}_d}     \ar[r]^{\widehat{\pi}}& \widehat{D}&\ar[l]_{\widehat{j}}\widehat{\widehat{K}}},\]
where $\widehat{j}$ is a quotient  homomorphism and $\widehat{\pi}$
is a topological isomorphism of $\widehat{ \widehat{L}_d} $ onto a
subgroup of    $\widehat{D}$. By Pontryagin
    duality (cf.
     \cite[Theorem 26.12]{hewiross1}),
      we can identify $\widehat{\widehat{L}_d}$ with $L^{\ap}$
    and $\widehat{\widehat{K}}$ with $K$. We  obtain in this way
    \[  \xymatrix{L^\ap  \ar[r]^{\widehat{\pi}}& \widehat{D} &\ar[l]_{\widehat{j}}K}.\]

    Now, $L$, as all discrete Abelian groups, admits some infinite
    $\ap(L)$-interpolation set $A$ (see \cite{hartryll64} or
    \cite{galihern99fu}). Arguing as in Example \ref{ex:lucint} we see
    that $A$ is an $\ap(L^\ap)$-interpolation set and, hence, $\widehat{\pi}(A)$ is an  $\ap(\widehat{D})$-interpolation set.
   Now $\widehat{\pi}(A)$ can be lifted through the quotient homomorphism $\widehat{j}$ to obtain an
    $\ap(K$)-interpolation set $I$. As almost periodic functions on
    $K$ extend to almost periodic functions on $G$,  we see that $I$ is
    an $\ap(G)$-interpolation set. Being infinite and contained in a
    compact group it cannot be right (left) uniformly discrete.
    \end{proof}

We would like to finish   pointing some of the questions that have
not been settled in the present paper and in our opinion deserve
further attention.

Sidon sets (i.e. $B(G)$-interpolation sets)
  have been the object of
serious attention in the literature, but  mostly in the discrete and
Abelian cases. The non-discrete, non-Abelian case have  received
little attention. Since $\B(G)$-interpolation sets have not been
under the focus in the present paper, some basic questions are yet
to be clarified. One may ask for instance:

\begin{question}\label{Q1}
Is Proposition \ref{openpb} valid for all  locally compact metrizable
groups?
 In particular,
    are $\B(G)$-interpolation sets necessarily $B(G)$-interpolation sets?
 Another interesting question is whether
all  $\B(G)$-interpolation sets are approximable $\B(G)$-interpolation sets.
\end{question}


Let $G$ be a metrizable locally compact group. While
$\ap(G)$-interpolation sets are never approximable,
$\luc(G)$-interpolation sets are always approximable. Sitting in
between the algebras $\luc(G)$ and $\ap(G)$ we have the algebras
$\B(G)$ and $\wap(G)$. As we have just remarked,
$\B(G)$-interpolation sets are often approximable (at least, when $G$ is
discrete and Abelian they always are) but it is not so clear whether
$\wap(G)$-interpolation sets should be on the approximable side or
not. So one may ask.
\begin{question}
Are $\wap(G)$-interpolation sets approximable? In particular, let
$T\subseteq \Z$ be a  $\wap(\Z)$-interpolation set, must $T$ be
approximable?
\end{question}

\noindent {\sc Acknowledgement.} This paper was written when the
first author was visiting University of Jaume I in Castell\'on in
December 2010-January 2011. He would like to thank Jorge Galindo for
his hospitality and all the folks at the department of mathematics
in Castell\'on. The work was partially supported by Grant
INV-2010-20 of the 2010 Program for Visiting Researchers of
University Jaume I. This support is also gratefully acknowledged.

\end{document}